\newtheorem{theorem}{Theorem}[section]
\newtheorem{claim}{Claim}[theorem]
\newtheorem{lemma}[theorem]{Lemma}
\newtheorem{fact}{Fact}	
\newtheorem{prop}[theorem]{Proposition}
\newtheorem{cor}[theorem]{Corollary}
\newtheorem*{thma}{Theorem~A}
\newtheorem*{thmb}{Theorem~B}
\newtheorem*{thmc}{Theorem~C}
\newtheorem*{cora}{Corollary~1}
\newtheorem*{corb}{Corollary~2}
\theoremstyle{definition}
\newtheorem{defn}[theorem]{Definition}
\theoremstyle{remark}
\newtheorem{q}[theorem]{Question}
\DeclareMathOperator{\add}{Add}
\DeclareMathOperator{\otp}{otp}
\DeclareMathOperator{\acc}{acc}
\DeclareMathOperator{\nacc}{nacc}
\DeclareMathOperator{\ads}{ADS}
\DeclareMathOperator{\cf}{cf}
\DeclareMathOperator{\cov}{cov}
\DeclareMathOperator{\dom}{dom}
\DeclareMathOperator{\im}{Im}
\DeclareMathOperator{\ns}{NS}
\newcommand*\axiomfont[1]{\textsf{\textup{#1}}}
\newcommand\ssh{\axiomfont{SSH}}
\renewcommand\mid{\mathrel{|}\allowbreak}
\def\s{\subseteq}
\subjclass[2010]{Primary 03E05; Secondary 03E35, 03E17}
\title{On the ideal $J[\kappa]$}
\author{Assaf Rinot}
\address{Department of Mathematics, Bar-Ilan University, Ramat-Gan 5290002, Israel.}
\urladdr{http://www.assafrinot.com}
\begin{document}
\begin{abstract}  Motivated by a question from a recent paper by Gilton, Levine and Stejskalov{\'a},
we obtain a new characterization of the ideal $J[\kappa]$, from which we confirm that $\kappa$-Souslin trees 
exist in various models of interest.

As a corollary we get that for every integer $n$ such that $\mathfrak b<2^{\aleph_n}=\aleph_{n+1}$,
if $\square(\aleph_{n+1})$ holds,
then there exists an $\aleph_{n+1}$-Souslin tree.
\end{abstract}
\maketitle

\section{Introduction}
In a recent paper, Gilton, Levine and Stejskalov{\'a} prove two consistency results from the consistency of a Mahlo cardinal.
Focusing on the first interesting case, being the cardinal $\aleph_2$, these results read as follows.
\begin{fact}[{\cite[Theorem~1.4 and Section~6.3]{gilton2021trees}}]
If $\mu$ is a Mahlo cardinal, then in some forcing extension, $\mu=\aleph_2$, and all of the following hold simultaneously:
\begin{enumerate}
\item $S^2_1\in I[\aleph_2]$;
\item $2^{\aleph_0}=2^{\aleph_1}=\aleph_2$;
\item Every stationary subset of $S^2_0$ reflects;
\item There is an $\aleph_2$-Aronszajn tree, but all $\aleph_2$-Aronszajn trees are nonspecial.
\end{enumerate}

Furthermore, Clauses (1),(3),(4) are indestructible under the forcing to add any number of Cohen reals.
\end{fact}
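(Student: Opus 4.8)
The plan is to realize all four clauses in a single extension of a ground model $V \models \mathrm{GCH}$ carrying a Mahlo cardinal $\mu$, by means of a Mitchell-style two-step poset, and then to argue that adding Cohen reals cannot disturb clauses (1), (3) and (4). First I would fix the Mitchell forcing $\mathbb{M} = \mathbb{M}(\omega,\mu)$, whose conditions are pairs $(p,q)$ with $p \in \add(\omega,\mu)$ and $q$ a countable partial function assigning to each $\alpha$ in its domain an $\add(\omega,\alpha)$-name for a condition in $\mathrm{Coll}(\omega_1,\alpha)$. The three structural facts to establish are: (i) $\mathbb{M}$ is $\mu$-cc, using the inaccessibility of $\mu$; (ii) $\mathbb{M}$ projects onto the ccc poset $\add(\omega,\mu)$, while in the term ordering the complementary factor is $\sigma$-closed; and (iii) $\mathbb{M}$ collapses every cardinal in the interval $(\aleph_1,\mu)$ but preserves $\aleph_1$ and $\mu$, so that $\mu = \aleph_2$ in $V^{\mathbb{M}}$. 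Clause (2) then drops out of a name-counting argument: the Cohen part yields $2^{\aleph_0} = \mu$, while $\mu$-cc together with ground-model $\mathrm{GCH}$ bounds the number of nice names for subsets of $\aleph_1$ by $\mu$, so that $2^{\aleph_0} = 2^{\aleph_1} = \aleph_2$.

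Clauses (1) and (3) I would read off the factorization. For (3), given a name $\dot S$ for a stationary subset of $S^2_0$, the $\mu$-cc confines the relevant information below some inaccessible $\alpha < \mu$, where $\mathbb{M}$ factors as $\mathbb{M}_\alpha \ast \dot{\mathbb{M}}^{\mathrm{tail}}$ with $\mathbb{M}_\alpha = \mathbb{M}(\omega,\alpha)$ forcing $\alpha = \aleph_2$; since Mahloness makes the inaccessibles stationary in $\mu$ and the tail factor preserves stationary subsets of $S^2_0$ (its closed part adds no $\omega$-sequence, its ccc part preserves every stationary set), the set reflects at such an $\alpha$. For (1), the $\sigma$-closed collapse generics assemble into a $\s$-increasing sequence of bounded subsets of $\aleph_2$ that witnesses $S^2_1 \in I[\aleph_2]$.

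The genuinely hard clause is (4), and with it the indestructibility. Because $\mu$ is only Mahlo, the tree property at $\aleph_2$ fails, so an $\aleph_2$-Aronszajn tree exists; the real content is that \emph{every} such tree is nonspecial and remains so after forcing with $\add(\omega,\lambda)$ for arbitrary $\lambda$. Here I would aim for a single preservation statement: the iteration $\mathbb{M} \ast \add(\omega,\lambda)$ adds no specializing function $f \colon T \to \omega_1$ to any $\aleph_2$-Aronszajn tree $T$ of $V^{\mathbb{M}}$. The strategy is to push a putative name for such an $f$ through the factorization, exploiting that $\add(\omega,\lambda)$ is ccc: by a support-reflection and amalgamation argument, a specialization in the Cohen extension would descend to one already present in $V^{\mathbb{M}}$, against nonspecialness there, while the $\sigma$-closed factor itself adds no specialization to a tree it did not already trivialize. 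For the remaining parts of the indestructibility, clause (1) is immediate since the same witnessing sequence survives (cofinalities are preserved and no point enters $S^2_1$), and clause (3) follows from the ccc preservation of stationarity together with a reflection-point argument applied to the projected name.

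The step I expect to be the main obstacle is precisely the friction between clauses (3) and (4): stationary reflection at $S^2_0$ is a compactness-type demand pulling toward closure and guessing, whereas nonspecialness of all $\aleph_2$-Aronszajn trees is an anti-compactness demand that each tree resist every decomposition into $\aleph_1$ antichains. Reconciling the two in one model — and in particular verifying that the ccc Cohen factor can never introduce a specialization that the $\sigma$-closed factor withheld — is what forces the Mitchell-style design (rather than a pure finite- or countable-support iteration) and is where I expect the bulk of the technical work, and the most delicate preservation lemma, to lie.
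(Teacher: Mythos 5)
This statement is Fact~1 of the paper, quoted from Gilton--Levine--Stejskalov\'a \cite{gilton2021trees}; the paper you were given contains no proof of it, so your sketch can only be judged on its own terms. Your general frame --- a Mitchell-style poset over a Mahlo cardinal, the projection onto $\add(\omega,\mu)$ with a $\sigma$-closed term-forcing complement, $\mu$-cc, factoring at inaccessibles, collapse-derived approachability witnesses --- is indeed the right family of techniques. But one step fails outright: the claim ``because $\mu$ is only Mahlo, the tree property at $\aleph_2$ fails, so an $\aleph_2$-Aronszajn tree exists.'' Assuming \emph{only} Mahloness does not make the tree property fail; $\mu$ may perfectly well be weakly compact, and the very same Mitchell-style quotient analysis you set up is how one proves the tree property at $\aleph_2$ from a weakly compact cardinal. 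So, for all your argument shows, your model may contain \emph{no} $\aleph_2$-Aronszajn tree at all, and clause~(4) fails. The existence half of (4) must be arranged deliberately: either by strengthening the hypothesis to ``$\mu$ is not weakly compact in $L$'' (which yields $\square(\aleph_2)$ in the extension by Todor\v{c}evi\'c's theorem, hence an Aronszajn tree --- but that changes the statement; note the present paper invokes exactly that extra hypothesis in its Corollary~2), or by an additional component of the construction that adds an Aronszajn tree while provably keeping every Aronszajn tree nonspecial. This is a genuine missing idea, not a verification you can defer.

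Two further mechanisms you invoke do not work as described. For indestructibility of nonspecialness you propose that a specializing function added by $\add(\omega,\lambda)$ ``would descend to one already present in $V^{\mathbb M}$''; there is no such descent principle --- ccc forcing can certainly add specializing functions (that is precisely what Baumgartner's specialization poset does at $\omega_1$), which is why this indestructibility is a substantial theorem rather than a remark. The workable route is absorption, not descent: since Cohen conditions are finite, $\add(\omega,\lambda)$ is the same poset computed in $V$ or in $V^{\mathbb M}$, so $\mathbb M\ast\add(\omega,\lambda)$ is densely isomorphic to $\mathbb M\times\add(\omega,\lambda)$, which is again $\mu$-cc and again a projection of (Cohen) $\times$ ($\sigma$-closed); one then reruns the entire reflect-to-an-inaccessible-$\alpha$ argument for this product, using a Silver-style branch lemma for the $\sigma$-closed factor (this is where $2^{\aleph_0}=\aleph_2$ in the intermediate model is used) and a Knaster-style branch lemma for the Cohen factor. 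Relatedly, your clause~(3) sketch treats reflection as automatic from the factorization: $\mu$-cc plus Mahloness do give stationarily many inaccessible $\alpha$ at which the name $\dot S$ is caught by $\mathbb M_\alpha$, and the tail does preserve stationary subsets of cofinality-$\omega$ sets, but the crux --- why $S\cap\alpha$ is stationary in $\alpha$ over $V^{\mathbb M_\alpha}$ for at least one such $\alpha$ --- is exactly the step that needs a Harrington--Shelah-style density/genericity argument, and it is absent; preservation by the tail only matters once that is established. (The same absorption observation is also what rescues the indestructibility of clause~(3), since a ccc extension can contain \emph{new} stationary subsets of $S^2_0$, for which mere preservation of old stationary sets says nothing.)
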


\begin{fact}[{\cite[Theorem~1.5 and Section~6.3]{gilton2021trees}}]
If $\mu$ is a Mahlo cardinal, then in some forcing extension, $\mu=\aleph_2$, and all of the following hold simultaneously:
\begin{itemize}
\item[(1')] $S^2_1\notin I[\aleph_2]$;
\item[(2)] $2^{\aleph_0}=2^{\aleph_1}=\aleph_2$;
\item[(3)] Every stationary subset of $S^2_0$ reflects;
\item[(4)] There is an $\aleph_2$-Aronszajn tree, but all $\aleph_2$-Aronszajn trees are nonspecial.
\end{itemize}

Furthermore, Clauses (1'),(3),(4) are indestructible under the forcing to add any number of Cohen reals.
\end{fact}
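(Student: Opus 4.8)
Since the conclusion is a consistency statement springing from a Mahlo cardinal, my plan is to produce a forcing extension rather than to argue in \textsf{ZFC}, and the model is most naturally built from a variant of \emph{Mitchell forcing} $\mathbb{M}$ over a ground model of \textsf{GCH} in which $\mu$ is Mahlo. Recall that $\mathbb{M}$ simultaneously adds $\mu$-many Cohen reals and interleaves collapses of the form $\mathrm{Coll}(\aleph_1,\cdot)$, so that in the extension $\mu=\aleph_2$ and $2^{\aleph_0}=2^{\aleph_1}=\aleph_2$, giving clause~(2); moreover the standard quotient analysis presents $\mathbb{M}$, over each initial segment, as a product of a Cohen part with a part that adds no new reals. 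The use of a Mahlo rather than a weakly compact $\mu$ is exactly what yields clause~(4): weak compactness would force the full tree property, whereas a Mahlo cardinal only guarantees that no $\aleph_2$-Aronszajn tree is special while an $\aleph_2$-Aronszajn tree survives---the Laver--Shelah phenomenon, read off from the stationarily many inaccessibles below $\mu$ that obstruct the would-be specializing functions.

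Clause~(3), reflection of every stationary subset of $S^2_0$, I would extract from those same inaccessibles: each such $\kappa<\mu$ becomes, after collapsing, a point with $\cf(\kappa)=\omega_1$ below $\aleph_2$, and at these points the no-new-reals tail of $\mathbb{M}$ reflects stationary subsets of $S^2_0$ down to $\kappa$; a routine projection/elementarity argument then pushes reflection to the full extension. The genuinely delicate clause is~(1'), namely $S^2_1\notin I[\aleph_2]$, since the plain Mitchell forcing tends to place $S^2_1$ \emph{into} the approachability ideal (this is precisely what distinguishes the companion Fact, whose clause~(1) asserts $S^2_1\in I[\aleph_2]$). I would therefore \emph{modify} the closed components so that along the points of cofinality $\omega_1$ no coherent sequence guessing clubs is added, diagonalizing against every candidate approachability witness; here $2^{\aleph_1}=\aleph_2$ provides exactly enough bookkeeping room to meet all $\aleph_2$-many candidates.

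For the indestructibility under adding any number of Cohen reals, I would exploit that the construction absorbs its own Cohen part: $\mathrm{Add}(\omega,\lambda)\times\mathbb{M}$ is forcing-equivalent to a forcing of the same form, so augmenting the model by further Cohen reals returns a model of the same shape. Adding Cohen reals is c.c.c., hence preserves stationary subsets of $S^2_0$ (indestructibility of~(3)) and adds no new $\omega_1$-sequence, so it can neither specialize a previously nonspecial $\aleph_2$-Aronszajn tree nor kill the surviving one (indestructibility of~(4)). Indestructibility of~(1') is the crux: although a c.c.c.\ forcing introduces no $\omega_1$-sequence outright, one must verify that its reals cannot be reassembled into an approachability witness for $S^2_1$, which I expect to follow from a mutual-genericity argument showing that the added Cohen reals commute with the Cohen part already present, so that the diagonalization of the previous paragraph still applies verbatim.

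The principal obstacle, as the outline makes plain, is the simultaneous maintenance of the \emph{structural} demand~(3) (stationary reflection, which wants the model tame) against the \emph{anti-structural} demand~(1') (failure of approachability), while keeping the tree dichotomy~(4) intact and robust under Cohen forcing. Both reflection and non-approachability hinge on fine control of the no-new-reals part of $\mathbb{M}$, and reconciling them---arranging that the reflecting clubs survive exactly while the approachability witnesses are diagonalized away, and that this balance is immune to subsequent Cohen reals---is where the real work will lie.
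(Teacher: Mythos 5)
First, a point of order: the statement you were asked to prove is labeled a \emph{Fact} in the paper precisely because the paper does not prove it --- it is quoted from Gilton--Levine--Stejskalov\'a \cite[Theorem~1.5 and \S 6.3]{gilton2021trees}, and the citation is the entire ``proof.'' So your outline can only be measured against the cited construction. Its general shape --- a Mitchell-style forcing over a Mahlo cardinal, a quotient/term-forcing analysis giving (2) and (3), and indestructibility via an absorption of the form $\add(\omega,\lambda)\times\mathbb{M}\cong\mathbb{M}'$ --- is indeed the right family of techniques. But as a proof the proposal has concrete gaps, at least three of which are fatal as written. For the existence half of clause~(4): a Mahlo cardinal may perfectly well be weakly compact, and over a weakly compact $\mu$ the plain Mitchell forcing yields the \emph{tree property} at $\aleph_2$, i.e., no $\aleph_2$-Aronszajn trees at all. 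Since the hypothesis is only Mahloness, your assertion that ``a Mahlo cardinal only guarantees that \dots\ an $\aleph_2$-Aronszajn tree survives'' does not follow; the construction itself must be responsible for producing an Aronszajn tree (or one must strengthen the anti-large-cardinal hypothesis, as the paper's own Corollary~2 does by assuming $\mu$ is not weakly compact in $L$), and your outline never does this.

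For clause~(1'): membership of $S^2_1$ in $I[\aleph_2]$ is witnessed by a sequence $\langle a_\alpha\mid\alpha<\omega_2\rangle$ of bounded subsets of $\omega_2$ together with a club, and there are $2^{\aleph_2}$ such sequences, not $\aleph_2$ many; so the claim that ``$2^{\aleph_1}=\aleph_2$ provides exactly enough bookkeeping room to meet all $\aleph_2$-many candidates'' undercounts the candidates, and the proposed diagonalization cannot even be organized as stated. Failure of approachability in models of this kind is obtained by a different mechanism (in the spirit of Krueger's disjoint stationary sequences: one shows that at stationarily many $\alpha\in S^2_1$ the quotient forcing adds cofinal $\omega$-sequences that no candidate witness can capture), not by enumerating witnesses along the iteration. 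Finally, for indestructibility: your preservation of (4) rests on the claim that adding Cohen reals ``adds no new $\omega_1$-sequence,'' which is false --- any new real already yields a new function from $\omega_1$ to $2$; you have dualized the true statement that $\sigma$-closed forcing adds no new $\omega$-sequences. Hence the preservation of nonspecialness, and a fortiori of (1'), under $\add(\omega,\lambda)$ is left without an argument (you yourself flag the (1') case as merely ``expected''). The absorption idea is the correct route, but it has to be carried out: one must show that the augmented model is again a model produced by the same construction with more Cohen reals, which requires the entire analysis --- including the non-approachability and nonspecialness arguments --- to be run uniformly in the number of Cohen reals. As it stands, the proposal is a strategy statement whose three hardest components ((4)-existence, (1'), and indestructibility) are precisely the ones left unproved.
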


At the end of their paper they ask whether in each of these models there moreover exists an $\aleph_2$-Souslin tree.
As a first step, and even without inspecting their particular models, one can utilize the powerful indestructibility feature of these models
to prove that (1),(2),(3),(4) and (1'),(2),(3),(4) are both consistent with the existence of an $\aleph_2$-Souslin tree:

\begin{cora} Assuming the consistency of a Mahlo cardinal, it is consistent that (1)--(5) hold simultaneously,
and it is consistent that (1') and (2)-(5) hold simultaneously, where:
\begin{itemize}
\item[(5)] There exists a uniformly coherent $\aleph_2$-Souslin tree.
\end{itemize}
\end{cora}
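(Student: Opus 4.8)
The plan is to begin with the Gilton--Levine--Stejskalov{\'a} models and to append a Souslin tree by a ccc forcing, leaning on the indestructibility clauses so that no inspection of the particular models is needed. Let $V$ realize (1),(2),(3),(4) as in Fact~1 (respectively, (1'),(2),(3),(4) as in Fact~2), and let $G$ be $V$-generic for $\mathbb{C}=\add(\omega,\aleph_2)$, the poset adding $\aleph_2$ Cohen reals. By the last clause of Fact~1 (respectively Fact~2), the conjunction of (1),(3),(4) (respectively (1'),(3),(4)) is indestructible under adding any number of Cohen reals, so these hold in $V[G]$. It remains to secure (2) and to build (5) there.

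For (2), note that $\mathbb{C}$ is ccc of size $\aleph_2$, hence preserves cardinals and cofinalities, and a standard nice-name count---using $2^{\aleph_0}=2^{\aleph_1}=\aleph_2$ in $V$---gives $2^{\aleph_0}=2^{\aleph_1}=\aleph_2$ in $V[G]$, so (2) persists. The decisive by-product of this forcing concerns the bounding number. Since $\mathbb{C}$ adds no dominating real and any single real of $V[G]$ is added by only countably many of the Cohen coordinates, any $\aleph_1$ of the generic reals (taken at distinct coordinates) form an unbounded family; hence $\mathfrak b=\aleph_1$ in $V[G]$, so in particular $\mathfrak b<2^{\aleph_1}=\aleph_2$. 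This is the whole point of passing to the extension: we need not know the value of $\mathfrak b$ in the original models, since the Cohen forcing drives it down to $\aleph_1$ while leaving (1)/(1'),(2),(3),(4) intact.

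The substance is (5). Here I would apply the paper's main theorem---the new characterization of $J[\kappa]$---at $\kappa=\aleph_2$, and verify its hypotheses in $V[G]$. The cardinal-arithmetic input $\mathfrak b<2^{\aleph_1}=\aleph_2$ was just arranged, and the $J[\aleph_2]$-theoretic input I would draw from the indestructible combinatorics recorded in (3) and (4): the presence of an $\aleph_2$-Aronszajn tree and the reflection of stationary subsets of $S^2_0$. Granting both, the theorem delivers a uniformly coherent $\aleph_2$-Souslin tree, which is (5). Crucially, these hypotheses refer only to features shared by the two models---(2),(3),(4) and the value of $\mathfrak b$---so the argument is insensitive to whether (1) or (1') holds, and the same computation yields both asserted consistencies.

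The main obstacle is the verification of the $J[\aleph_2]$-side of the hypothesis in $V[G]$. One cannot simply invoke the abstract's $\square$-based corollary, since $\square(\aleph_2)$ should fail in these models: a $\square(\aleph_2)$-sequence yields a non-reflecting stationary set, which can be arranged to lie in $S^2_0$, contradicting the reflection demanded by (3). So the role of the new characterization is to replace $\square(\aleph_2)$ by a weaker, reflection-compatible statement about $J[\aleph_2]$ that does hold. Two things then need checking: that this weaker statement follows from (1)/(1')--(4) uniformly, so that the particular GLS iterations need never be examined; and that it is not destroyed by $\mathbb{C}$, which I expect to follow from the ccc-ness of $\mathbb{C}$ together with the near-invariance of coherent $C$-sequences and of the club filter on $\aleph_2$ under ccc forcing. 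Once the characterization reduces the hypothesis to such a preserved, model-independent property, the remaining work is the routine arithmetic and indestructibility bookkeeping above.
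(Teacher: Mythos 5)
Your bookkeeping steps are fine and match the paper's intended route: pass to a Cohen extension (the paper uses $\aleph_1$ Cohen reals, you use $\aleph_2$; either works), invoke the indestructibility clause to keep (1)/(1'),(3),(4), count nice names to keep (2), and observe that the Cohen reals drive $\mathfrak b$ down to $\aleph_1$. The proof breaks at the decisive step, and it breaks because of a concrete mathematical error: you assert that $\square(\aleph_2)$ must fail in these models since ``a $\square(\aleph_2)$-sequence yields a non-reflecting stationary set.'' That is false. It is Jensen's square $\square_{\aleph_1}$ that produces non-reflecting stationary sets; the Todor\v{c}evi\'c principle $\square(\aleph_2)$ (a nontrivial coherent $C$-sequence) is compatible with clause (3) --- it only rules out \emph{simultaneous} reflection of pairs of stationary sets. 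Indeed, the entire point of the paper's Corollary~2 is that the Gilton--Levine--Stejskalov\'a models satisfy $\square(\aleph_2)$ and (3) together, provided the Mahlo cardinal $\mu$ is not weakly compact in $L$. Having wrongly discarded $\square(\aleph_2)$, you propose to extract the ``$J[\aleph_2]$-theoretic input'' from (3) and (4), i.e.\ from stationary reflection plus the existence of an $\aleph_2$-Aronszajn tree. Nothing supports this: by Theorem~B the relevant ideal hypothesis $J_\omega[\aleph_2]\neq\ns_{\aleph_2}$ is \emph{equivalent} to $\mathfrak b=\aleph_1$, so it comes from your bounding-number computation and owes nothing to (3) or (4); and to convert it into a Souslin tree, Theorem~\ref{m} genuinely needs $\square(\kappa)$ to build the $\boxtimes^-(S)$-sequence. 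If ``Aronszajn tree $+$ reflection on $S^2_0$ $+$ $\mathfrak b=\aleph_1$'' sufficed, the paper's Corollary~2 would not carry its anti-large-cardinal hypothesis, and the question of \cite{gilton2021trees} would have a ZFC-style answer the paper conspicuously does not claim.

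The missing idea is that, for a pure consistency statement, the hypothesis ``$\mu$ is not weakly compact in $L$'' costs nothing: Mahloness passes down to $L$, so one may work over $L$ with its least Mahlo cardinal $\mu$, which is not weakly compact there (weakly compact cardinals are Mahlo limits of Mahlo cardinals). By \cite[Claim~1.10]{MR908147}, $\square(\mu)$ then holds in \emph{every} outer universe in which $\mu$ remains regular and uncountable --- in particular in your final model, where $\mu=\aleph_2$. So no preservation argument for coherent sequences or for the club filter under ccc forcing is needed; the square principle is simply inherited, exactly because the hypothesis is about $L$ rather than about the forcing. With $\square(\aleph_2)$, $\mathfrak b=\aleph_1<2^{\aleph_1}=\aleph_2$, and clause (3) in hand, Theorem~A (with $n=1$) yields the uniformly coherent $\aleph_2$-Souslin tree, which is (5), uniformly in whether (1) or (1') holds.
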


This will follow from the fact that adding  $\aleph_1$ many Cohen reals makes $\mathfrak b=\aleph_1$ together with the following  theorem.
\begin{thma} Suppose that $n<\omega$ is such that $\mathfrak b<2^{\aleph_n}=\aleph_{n+1}$ and $\square(\aleph_{n+1})$ holds.
Then there is an $\aleph_{n+1}$-Souslin tree.
If every stationary subset of $S^{n+1}_0$ reflects,
then there is even a uniformly coherent $\aleph_{n+1}$-Souslin tree.
\end{thma}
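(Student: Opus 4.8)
The plan is to derive Theorem~A from the paper's new characterization of $J[\kappa]$ together with the Brodsky--Rinot proxy-principle machinery, never building a tree by hand. Throughout write $\kappa:=\aleph_{n+1}$ and $\lambda:=\aleph_n$, so that $\kappa=\lambda^+$.

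First I would dispose of a degenerate case. Since $\mathfrak b\ge\aleph_1$ always holds, the hypothesis $\mathfrak b<2^{\aleph_n}=\aleph_{n+1}$ is incompatible with $n=0$ (it would force $\mathfrak b<\aleph_1$), so we may assume $n\ge1$; in particular $\lambda=\aleph_n$ is uncountable and $\mathfrak b\le\lambda<\kappa$. Now $2^{\aleph_n}=\aleph_{n+1}$ reads $2^\lambda=\lambda^+$, so Shelah's diamond theorem yields $\diamondsuit(\kappa)$, and since $\cf(\lambda)=\aleph_n\ne\aleph_0$ it yields $\diamondsuit(S^{n+1}_0)$ as well. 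This is the source of the ``guessing'' half of the construction, and it is the only way in which the arithmetic hypothesis is used beyond $\mathfrak b<\kappa$.

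The substantive step is to manufacture, from $\square(\kappa)$ and $\mathfrak b<\kappa$, a single $C$-sequence that is coherent in the strong $\sqsubseteq$-sense supplied by $\square(\kappa)$ and that simultaneously captures a stationary set $S\s S^{n+1}_0$ in the sense of $J[\kappa]$. This is exactly what the new characterization of $J[\kappa]$ buys: because $\mathfrak b\le\lambda<\kappa$, it places the relevant subset of $S^{n+1}_0$ into $J[\kappa]$, and---this being the advantage of a characterization over a mere sufficient condition---it lets the $J[\kappa]$-witness be chosen compatibly with a prescribed $\square(\kappa)$-sequence. Concretely, a $\le^*$-unbounded sequence of length $\mathfrak b$ is used to attach, at the cofinality-$\omega$ points, cofinal sets of controlled order type that align with the given sequence; amalgamating these with the $\square(\kappa)$-sequence produces a witness for the parameter-free proxy principle $P^-$. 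Feeding in $\diamondsuit(S^{n+1}_0)$ then upgrades $P^-$ to the full proxy principle $P(\kappa,2,{\sqsubseteq},1,\ldots)$, and the Brodsky--Rinot construction converts this into a $\kappa$-Souslin tree.

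For the uniformly coherent strengthening I would bring in the reflection hypothesis. The obstruction is that a $\square(\kappa)$-sequence need only capture $S^{n+1}_0$ on a stationary set, whereas a uniformly coherent tree demands capturing on a club; assuming that every stationary subset of $S^{n+1}_0$ reflects, a pressing-down and reflection argument shows that the non-captured part of $S^{n+1}_0$ is nonstationary, so the proxy witness can be taken uniform and the construction then delivers a uniformly coherent $\kappa$-Souslin tree. I expect the main obstacle to lie precisely at this interface: extracting from the $J[\kappa]$-characterization a proxy witness that both coheres with a fixed $\square(\kappa)$-sequence and, in the presence of reflection, captures uniformly---the cardinal arithmetic and the reflection being introduced exactly to finance these two demands.
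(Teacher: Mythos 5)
Your overall architecture is the paper's: convert the hypothesis $\mathfrak b<2^{\aleph_n}=\aleph_{n+1}$ into membership of a stationary subset of $S^{n+1}_0$ in the relevant ideal, combine a witness for that membership with a $\square(\aleph_{n+1})$-sequence and a $\diamondsuit$-sequence (the latter from $2^{\aleph_n}=\aleph_{n+1}$ via Shelah's theorem) to obtain $\boxtimes^-(S)$, and then invoke the Brodsky--Rinot machinery, with stationary reflection used as a black box for the uniformly coherent tree. However, there is a genuine gap at your first substantive step: the ideal that $\mathfrak b$ controls is \emph{not} $J[\kappa]$ but its enrichment $J_\omega[\kappa]$, and the distinction is not cosmetic. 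By definition, $J[\kappa]=J_2[\kappa]$, so its witnessing functions must guess $f(\beta)$ \emph{exactly}; what an unbounded family in $({}^\omega\omega,{<^*})$ buys is guessing only \emph{up to a finite set} ($g(\xi)<h(\xi)$, i.e.\ $g(\xi)\in h(\xi)$ with $h(\xi)$ construed as a finite set). This is exactly why Proposition~\ref{bprop} identifies $\mathfrak b$ with the four-cardinal number $m(\aleph_0,\aleph_0,\aleph_0,\aleph_0)$ (i.e.\ $\chi=\omega$), whereas the three-cardinal number $m(\aleph_0,\aleph_0,\aleph_0)$ (i.e.\ $\chi=2$) is the least size of a nonmeager set of reals; correspondingly, Corollary~\ref{thmb} shows $J[\aleph_2]\neq\ns_{\aleph_2}$ iff there is a nonmeager set of reals of size $\aleph_1$, while it is $J_\omega[\aleph_2]\neq\ns_{\aleph_2}$ that is equivalent to $\mathfrak b=\aleph_1$. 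Since $\mathfrak b\le\operatorname{non}(\mathcal M)$ and this inequality is consistently strict (e.g.\ in the random real model over $L$, where $\mathfrak b=\aleph_1<2^{\aleph_1}=\aleph_2$, $\square(\aleph_2)$ holds, yet every set of reals of size $\aleph_1$ is meager), there are models satisfying all the hypotheses of Theorem~A in which $J[\aleph_2]=\ns_{\aleph_2}$. In such a model your plan, run with $J[\kappa]$, is attempting to prove something false; no choice of witness, compatible with a square sequence or not, can exist.

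The repair is precisely the paper's route, and it has two components. First, replace $J[\kappa]$ by $J_\omega[\kappa]$: from $m(\aleph_0,\aleph_0,\aleph_0,\aleph_0)=\mathfrak b\le\aleph_n<\aleph_\omega$, Corollary~\ref{cor34} (with $\chi=\theta=\aleph_0$, $\lambda=\aleph_n$) places a stationary subset of $E^{\lambda^+}_\omega$ into $J_\omega[\lambda^+]$, and Theorem~\ref{thmc} together with $E^{\lambda^+}_\omega\in I[\lambda^+]$ upgrades this to $E^{\lambda^+}_\omega\in J_\omega[\lambda^+]$. Second --- and this is a step your sketch silently assumes --- the tree-side theorem must accept finite-set-valued witnesses: the result of \cite{paper24} produces $\boxtimes^-(S)$ only from $S\in J_2[\kappa]$, and the paper's Theorem~\ref{m} is exactly the needed improvement to $J_\omega[\kappa]$ (the amalgamation now inserts finitely many points per interval of the square sequence, which still suffices for the contradiction). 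Two smaller inaccuracies in your write-up: there is no need to choose the ideal witness ``compatibly with a prescribed $\square(\kappa)$-sequence'' --- Theorem~\ref{m} amalgamates an arbitrary witness with an arbitrary coherent $C$-sequence by means of a $\diamondsuit$-derived matrix; and the uniformly coherent tree is obtained from Lemma~\ref{lemma42}(2), i.e.\ from $\boxtimes^-(T)$ plus reflection of stationary subsets of $T$ via cited results, not by arranging that the witness ``captures on a club.''
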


To prove Theorem~A, we shall deepen our study of the ideal $J[\kappa]$ from \cite{paper24},
and its enrichment $J_\omega[\kappa]$ which we introduce here.
The two are $\kappa$-complete normal ideals over a regular uncountable cardinal $\kappa$,
and if $\square(\kappa)$ and $\diamondsuit(\kappa)$ both hold and  $J_\omega[\kappa]\neq\ns_\kappa$,
then there exists a $\kappa$-Souslin tree.
Here, we shall find sufficient and equivalent conditions for the ideals to contain a stationary set.
At the level of $\aleph_2$, this reads as follows.
\begin{thmb}\begin{itemize}
\item $J_\omega[\aleph_2]\neq\ns_{\aleph_2}$
iff $\mathfrak b=\aleph_1$;
\item  $J[\aleph_2]\neq\ns_{\aleph_2}$
iff there exists a nonmeager set of reals of size $\aleph_1$.
\end{itemize}
\end{thmb}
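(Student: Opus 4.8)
The plan is to derive both equivalences as the $\kappa=\aleph_2$ instance of the general characterization of $J[\kappa]$ and $J_\omega[\kappa]$ developed in the body, and then to match the resulting combinatorial condition against the standard characterizations of the two cardinal characteristics in play. Recall that $\mathfrak b=\aleph_1$ is equivalent to the existence of a $\leq^*$-increasing, $\leq^*$-unbounded sequence $\langle f_i\mid i<\omega_1\rangle$ in $\omega^\omega$, and, by Bartoszy\'nski's theorem, that the existence of a nonmeager set of reals of size $\aleph_1$ is equivalent to the existence of a family $\langle f_i\mid i<\omega_1\rangle$ in $\omega^\omega$ that is \emph{infinitely-often matching}: for every $g\in\omega^\omega$ there is some $i<\omega_1$ with $f_i(n)=g(n)$ for infinitely many $n$. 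Since $J[\aleph_2]\s J_\omega[\aleph_2]$ and $\mathfrak b\le\operatorname{non}(\mathcal M)$, the two bullets are a priori consistent with one another, and it suffices to prove each of the four implications; I would organize the work so that the $J_\omega/\mathfrak b$ pair and the $J/\operatorname{non}(\mathcal M)$ pair run in parallel, differing only in whether the guessing clause in the definition of the ideal demands $\leq^*$-unboundedness or infinitely-often matching.

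For the direction producing a stationary witness, suppose the relevant family $\langle f_i\mid i<\omega_1\rangle$ is given. First I would fix, for each $\alpha\in S^2_1$, a continuous strictly increasing cofinal map $e_\alpha\colon\omega_1\to\alpha$, and transport the family along these maps to manufacture, simultaneously at every such $\alpha$, the local data required by the definition of the ideal. The point is that the $\leq^*$-unboundedness (resp.\ the infinitely-often matching) of the $f_i$'s is exactly what certifies the amenability/guessing clause of $J_\omega[\aleph_2]$ (resp.\ $J[\aleph_2]$) at each $\alpha$. A pressing-down argument on the resulting assignment, together with the normality of the ideal, then upgrades ``some $\alpha$'' to a \emph{stationary} set $S\s S^2_1$ carrying a single coherent witness, so that $S\in J_\omega[\aleph_2]\setminus\ns_{\aleph_2}$ (resp.\ $S\in J[\aleph_2]\setminus\ns_{\aleph_2}$).

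For the converse, suppose $S\in J_\omega[\aleph_2]$ (resp.\ $J[\aleph_2]$) is stationary, with its witnessing $C$-sequence and attendant local functions. Here I would pass to a single well-chosen point $\alpha^\ast\in S\cap S^2_1$, obtained by a reflection/elementarity argument so that the global witness is correctly seen at $\alpha^\ast$, and read off from the witness — using the ladders attached to the points of the cofinal $\omega_1$-sequence of $\alpha^\ast$ — an $\omega_1$-indexed family in $\omega^\omega$. The defining clause of the ideal, evaluated at $\alpha^\ast$, then forces this family to be $\leq^*$-unbounded (resp.\ infinitely-often matching); as it has size $\aleph_1$ and both characteristics are always at least $\aleph_1$, this yields $\mathfrak b=\aleph_1$ (resp.\ a nonmeager set of reals of size $\aleph_1$).

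I expect the main obstacle to be the precise combinatorial translation in both directions — in particular, isolating why the enrichment $J_\omega[\kappa]$ corresponds exactly to the weaker, bounding-type demand behind $\mathfrak b$, whereas the finer ideal $J[\kappa]$ corresponds to the stronger matching-type demand behind $\operatorname{non}(\mathcal M)$, and arranging the two arguments so that this is the only difference between them. Secondary difficulties are ensuring that the object produced in the first direction is genuinely stationary rather than merely nonempty and is compatible with normality, and making the reflection step in the converse extract an honest family of reals at $\alpha^\ast$; I would handle the cofinality bookkeeping by keeping all the action on $S^2_1$ and checking that points of cofinality $\aleph_0$ contribute nothing to either ideal.
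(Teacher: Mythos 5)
Your opening move---recasting $\mathfrak b=\aleph_1$ as the existence of an $\aleph_1$-sized guessing family and a nonmeager set of size $\aleph_1$ as an $\aleph_1$-sized infinitely-often matching family (Bartoszy\'nski)---is exactly the paper's Proposition~\ref{bprop}, so your translation layer is the right one. The fatal problem is where you localize the ideals: you keep ``all the action on $S^2_1$'' and assert that ``points of cofinality $\aleph_0$ contribute nothing to either ideal.'' This is precisely backwards. Taking $\lambda=\aleph_1$ and $\chi\in\{2,\omega\}$ in Proposition~\ref{prop35} (so that $\chi<\lambda$ and $\cf(\lambda)=\aleph_1$), \emph{every} member of $J[\aleph_2]$ and of $J_\omega[\aleph_2]$ meets $S^2_1$ in a nonstationary set: at an $\alpha$ of cofinality $\aleph_1$ one diagonalizes along a sequence $\langle\beta_j\mid j<\omega_1\rangle$ converging to $\alpha$, choosing $f(\beta_j)$ outside the ${<}\aleph_1$-sized set of values proposed by the candidate functions indexed by the first $j$ blocks of a decomposition of $\alpha$ into $\aleph_1$-many countable pieces. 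Consequently, your forward direction attempts to prove something provably false in ZFC (a stationary subset of $S^2_1$ lying in the ideal), and your converse direction, which reflects to a point $\alpha^\ast\in S\cap S^2_1$, starts from a set that is always nonstationary and may well be empty. In the paper, the stationary sets in these ideals concentrate on $S^2_0$, and both bullets of Theorem~B come down to whether $S^2_0$ itself enters the ideal.

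Even after relocating to $S^2_0$, a second genuine gap remains which the phrase ``transport the family along $e_\alpha$'' hides: at $\alpha\in S^2_0$ the regressive functions to be guessed have range of size $\aleph_1$, whereas your families consist of functions in $\omega^\omega$; a cofinal map $\omega\rightarrow\alpha$ reindexes the domain but does nothing about the range. The paper bridges this with a two-cardinal step that has no analogue in your outline: the ZFC fact $m(\aleph_0,\aleph_1)=\aleph_1$ (Proposition~\ref{prop22}(3)), composed with the real-number family via Lemma~\ref{lemma23b}(3) to get $m(\aleph_1,\aleph_1,\aleph_0,\chi)=\aleph_1$, and then Lemma~\ref{lemma23} together with the injectivity structure $A(d)$ of Proposition~\ref{sufficient} (and the surjections $e_\beta:\aleph_1\rightarrow\beta+1$) to convert this into $S^2_0\in J_\chi[\aleph_2]$. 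Note also that no pressing-down or normality argument is needed anywhere: the witness demanded by Definition~\ref{regressive} is a single global club and a single $\aleph_2$-indexed sequence of functions. Your converse should likewise run through a point of $S\cap S^2_0$ above $\aleph_1$, extracting a family that guesses maps $g:X\rightarrow\omega_1$ for countable $X$, and only then restrict to maps into $\omega$; this is the implication $(2)\Rightarrow(3)$ of Theorem~\ref{thmc} together with Corollary~\ref{cor31}, and it is the step your reflection argument would have to reproduce.
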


Thus, coming back to the question of whether an $\aleph_2$-Souslin tree exists in the original models from \cite{gilton2021trees},
it turns out that the answer is affirmative provided that the Mahlo cardinal $\mu$ taken there is not weakly compact in $L$.
This anti-large-cardinal hypothesis on $\mu$ suffices to secure that $\square(\aleph_2)$ will hold in their models.
In addition, the models of Facts 1 and 2 are generic extensions by $\add(\omega,\mu)$ followed by an $\aleph_1$-distributive
forcing, meaning that the final models do satisfy $\mathfrak b=\aleph_1$.

\begin{corb} Assuming that $\mu$ is a Mahlo cardinal which is not weakly compact in $L$,
in the forcing extensions of Facts 1 and 2, there exists a uniformly coherent $\aleph_2$-Souslin tree.
\end{corb}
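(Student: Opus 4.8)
The plan is to verify, in the models of Facts~1 and~2 under the extra hypothesis that $\mu$ is not weakly compact in $L$, the hypotheses of Theorem~A with $n=1$, and then to invoke Theorem~A to produce the desired tree. Concretely, Theorem~A for $n=1$ demands four things: that $2^{\aleph_1}=\aleph_2$; that $\mathfrak b<\aleph_2$; that $\square(\aleph_2)$ hold; and, for the \emph{uniformly coherent} conclusion, that every stationary subset of $S^2_0$ reflect. Two of these are handed to us for free: Clause~(2) of Facts~1 and~2 gives $2^{\aleph_0}=2^{\aleph_1}=\aleph_2$, and Clause~(3) gives precisely the required stationary reflection at $S^2_0$, which is what upgrades the output of Theorem~A to a uniformly coherent $\aleph_2$-Souslin tree. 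So the work is to establish $\mathfrak b=\aleph_1$ and $\square(\aleph_2)$.

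For the bounding number I would exploit the stated factorization of the models as a generic extension by $\add(\omega,\mu)$ followed by an $\aleph_1$-distributive forcing. The Cohen step is a finite-support forcing that adds no dominating real; hence, over a ground model satisfying $\mathsf{CH}$ (which we may assume, since a Mahlo cardinal is consistent with $V=L$ and these are consistency results), the ground-model reals remain an unbounded family of size $\aleph_1$, so $\mathfrak b=\aleph_1$ after the Cohen step. The second step, being $\aleph_1$-distributive, adds no new reals and preserves $\aleph_1$; it therefore leaves $(\omega^\omega,\le^*)$ and the value of $\mathfrak b$ untouched. Consequently $\mathfrak b=\aleph_1<\aleph_2=2^{\aleph_1}$ in the final model, as needed.

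The heart of the matter, and the step I expect to be the main obstacle, is securing $\square(\aleph_2)$. Here I would route the anti-large-cardinal hypothesis through Jensen's characterization: for a regular cardinal $\mu$ that is not weakly compact in $L$, a $\square(\mu)$-sequence exists in $L$; taking the ground model to be $L$, where $\mu$ is Mahlo and hence inaccessible but not weakly compact, fixes such a sequence $\vec C$ in the ground model. It then remains to preserve $\square(\mu)$ up to the final model, where $\mu$ has become $\aleph_2$. The decisive point is that the forcing is $\mu$-cc: the Cohen part is ccc, and the component collapsing $\mu$ onto $\aleph_2$ is a Mitchell/Levy-type poset whose $\mu$-chain condition is exactly what preserves $\mu$ as a regular cardinal with $2^{\aleph_0}=\mu$; verifying this $\mu$-cc-ness for the concrete poset of Facts~1 and~2 is the technical crux. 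Granting it, the general preservation argument applies: in any $\mu$-cc extension in which $\mu$ stays regular, every club of $\mu$ contains a ground-model club, and feeding such a club into the coherence of $\vec C$ reconstructs, back in $L$, a thread through $\vec C$, contradicting that it was a $\square(\mu)$-sequence. Hence no thread is added, $\vec C$ remains a $\square(\mu)$-sequence, and $\square(\aleph_2)$ holds in the final model. With all four hypotheses of Theorem~A verified for $n=1$, Theorem~A yields a uniformly coherent $\aleph_2$-Souslin tree, as claimed.
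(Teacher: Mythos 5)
Your overall skeleton --- verifying the hypotheses of Theorem~A with $n=1$ in the models of Facts~1 and~2, using Clause~(2) for $2^{\aleph_1}=\aleph_2$ and Clause~(3) for the stationary reflection that upgrades the conclusion to a uniformly coherent tree --- is exactly the paper's route. The genuine gap is in your treatment of $\square(\aleph_2)$. You propose to fix a $\square(\mu)$-sequence in $L$, take $L$ itself as the ground model, and then preserve the sequence through the GLS poset via a $\mu$-cc argument; but you explicitly leave the $\mu$-cc of that poset unverified (your ``technical crux''), and this is not a detail one can wave at: the GLS construction is a Mitchell-style forcing with further components (killing specializing functions, arranging reflection), and checking a chain condition for it is precisely the kind of inspection of their particular models that the paper deliberately avoids. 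Moreover, Corollary~2 is a statement about the forcing extensions of Facts~1 and~2 over an \emph{arbitrary} ground model in which $\mu$ is Mahlo and not weakly compact in $L$; you are not free to ``take the ground model to be $L$'', so even granting the chain condition, your argument establishes a weaker assertion. The missing idea is Todor\v{c}evi\'c's theorem (\cite[Claim~1.10]{MR908147}, invoked in the paper's final section): if $\mu$ is not weakly compact in $L$, then $\square(\mu)$ holds in \emph{every} universe in which $\mu$ remains regular and uncountable. Applied directly in the final model, where $\mu=\aleph_2$ is regular and uncountable, this gives $\square(\aleph_2)$ outright --- no choice of ground model, no preservation lemma, no chain-condition computation.

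A smaller instance of the same problem occurs in your computation of $\mathfrak b$: you assume CH in the ground model so that the ground-model reals form an unbounded family of size $\aleph_1$. Again, the ground model is not yours to choose. The standard argument needs no such hypothesis: after forcing with $\add(\omega,\mu)$, the first $\aleph_1$ Cohen reals themselves form an unbounded family, since any real of the extension lies in an intermediate extension determined by countably many coordinates, and a Cohen real over that intermediate model is unbounded over it; hence $\mathfrak b=\aleph_1$ after the Cohen step, and the subsequent $\aleph_1$-distributive forcing adds no new reals and preserves $\aleph_1$, so the value persists. This is what the paper means by ``adding $\aleph_1$ many Cohen reals makes $\mathfrak b=\aleph_1$''. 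With these two repairs --- quoting Todor\v{c}evi\'c instead of threading a square sequence, and using the Cohen reals themselves instead of ground-model reals --- your argument becomes the paper's proof.
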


Let us make clear that the results of this paper are not limited to $\aleph_2$,
and more general results are given by Corollary~\ref{cor310} below.
This general study also yields (Corollary~\ref{cor33} below) that assuming Shelah's Strong Hypothesis,
$J[\lambda^+]\neq\ns_{\lambda^+}$ for every cardinal $\lambda>2^{\aleph_1}$, 
as well as uncovers the following connection between the ideal $J[\lambda^+]$ and Shelah's approachability ideal $I[\lambda^+]$:

\begin{thmc} For infinite cardinals $\theta<\lambda$, 
if $J[\lambda^+]$ contains a stationary subset of $E^{\lambda^+}_\theta$, then it contains every stationary subset of $E^{\lambda^+}_\theta$ that lies in $I[\lambda^+]$.

In particular, for $\lambda$ regular, if $J[\lambda^+]$ contains a stationary subset of $E^{\lambda^+}_\theta$, then $E^{\lambda^+}_\theta$ itself is in $J[\lambda^+]$.
\end{thmc}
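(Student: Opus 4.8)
The plan is to derive the ``in particular'' clause from the main assertion by a standard approachability computation, and then to prove the main assertion by transporting a witness for $S_0 \in J[\lambda^+]$ along the approachability structure supplied by $I[\lambda^+]$, exploiting crucially that every point in play has one and the same cofinality $\theta$. Let me first dispose of the reduction. Suppose $\lambda$ is regular. Building in $\mathrm{ZFC}$ an internally approachable continuous $\in$-chain $\langle M_i : i<\lambda^+\rangle$ of elementary submodels of some $H(\chi)$ with $\lambda+1\subseteq M_0$, $|M_i|=\lambda$ and $\langle M_j : j\le i\rangle\in M_{i+1}$, one checks that every $\alpha$ in the club $\{\alpha : M_\alpha\cap\lambda^+=\alpha\}$ with $\cf(\alpha)=\theta<\lambda$ is approachable: a cofinal $\theta$-sequence threads through $\theta<\lambda$ stages, and the internal approachability of the chain makes each of its proper initial segments available below $\alpha$. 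Hence $E^{\lambda^+}_{<\lambda}\in I[\lambda^+]$, so in particular $E^{\lambda^+}_\theta$ is a stationary subset of $E^{\lambda^+}_\theta$ lying in $I[\lambda^+]$, and feeding $S:=E^{\lambda^+}_\theta$ into the main assertion yields $E^{\lambda^+}_\theta\in J[\lambda^+]$. (Regularity of $\lambda$ is exactly what limits the troublesome cofinality to $\lambda$ itself, which is why the clause is stated only in the regular case.)

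For the main assertion, fix a witness $\vec{C}$ to $S_0\in J[\lambda^+]$ in the sense of \cite{paper24}: a global coherent $C$-sequence together with a club on which the defining guessing/amenability clause is active at every point of $S_0$. Separately, fix an approachability sequence $\vec{a}=\langle a_\beta : \beta<\lambda^+\rangle$ and a club $E$ witnessing $S\in I[\lambda^+]$. The global part of a $J[\lambda^+]$-witness (coherence of $\vec C$) makes no reference to $S_0$, so the strategy is to re-use $\vec{C}$ and merely reverify the pointwise clause at the points of $S$; by normality of $J[\lambda^+]$ it then suffices to produce a club $E'\subseteq E$ such that the clause holds at every $\alpha\in S\cap E'$, for then the pair $(\vec C, E')$ witnesses $S\in J[\lambda^+]$ directly.

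The heart of the matter is the verification at a fixed $\alpha\in S\cap E$. Approachability furnishes a set $A_\alpha\subseteq\alpha$, cofinal in $\alpha$, with $\otp(A_\alpha)=\cf(\alpha)=\theta$, all of whose proper initial segments $A_\alpha\cap\beta$ occur among $\{a_\gamma : \gamma<\alpha\}$. Because these initial segments are recognizable below $\alpha$, I would use them to locate cofinally many $\bar\alpha<\alpha$ at which $S_0$ has already supplied a type-$\theta$ guessing object, and then invoke the coherence of $\vec C$ to propagate these lower objects upward into a single object at $\alpha$ of the required form. Since $\alpha$ and the chosen $\bar\alpha$ all carry cofinal approximations of the \emph{identical} order type $\theta$, these objects are interchangeable, and a pressing-down (Fodor) argument applied to the $\theta$-indexed enumeration of $A_\alpha$ stabilizes the relevant parameters on a cofinal subset, securing that the assembled object genuinely meets the guessing/amenability demand.

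I expect the main obstacle to be precisely this last transfer of the \emph{substantive} clause of $J[\lambda^+]$-membership, as opposed to the mere existence of a cofinal set of the right order type. The content of the ideal---the clause responsible, at the level of $\aleph_2$, for the equivalences of Theorem~B---is the guessing/amenability requirement, whereas approachability by itself yields only capture of initial segments. The delicate step is to show that this capture is strong enough to realize at $\alpha$ the very guessing realized at the points of $S_0$; this is where the common cofinality $\theta$ is indispensable, since it renders the type-$\theta$ objects interchangeable and lets the coherence of $\vec{C}$ perform the lifting. Once the clause is verified on a club $E'\subseteq E$, the pair $(\vec C,E')$ witnesses $S\in J[\lambda^+]$, completing the proof.
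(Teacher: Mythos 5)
Your plan has two genuine gaps, and the central one is fatal. First, your description of a witness to $S_0\in J[\lambda^+]$ does not match Definition~\ref{regressive}: membership is witnessed by a club $C$ together with a sequence of functions $\langle f_i:\lambda^+\rightarrow[\lambda^+]^{<2}\mid i<\lambda^+\rangle$ that catch every regressive $f:\alpha\rightarrow\alpha$ on every cofinal $B\subseteq\alpha$, for each $\alpha\in S_0\cap C$; no coherent $C$-sequence appears anywhere in the definition (coherent sequences enter only in Section~\ref{seconnection}, on the Souslin-tree side), so there is no ``coherence of $\vec C$'' available to perform your lifting. More importantly, the step you explicitly defer---transferring the guessing clause from points of $S_0$ to a point $\alpha\in S$---cannot be carried out by pointwise propagation, and the shared cofinality $\theta$, far from making the objects ``interchangeable,'' is precisely the obstruction: given $\alpha\in S$, it suffices to treat cofinal $B\subseteq\alpha$ with $\otp(B)=\theta$, and then for every $\bar\alpha\in S_0\cap\alpha$ (which has cofinality $\theta$) the initial segment $B\cap\bar\alpha$ has order type $<\theta$, so $\sup(B\cap\bar\alpha)<\bar\alpha$. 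Thus no point of $S_0$ below $\alpha$ is a limit point of $B$, and the clause at $\bar\alpha$, which speaks only of cofinal subsets of $\bar\alpha$, says nothing whatsoever about $B$. Your Fodor step fails independently: the parameters produced below $\alpha$ range over $\alpha$, a set of size up to $\lambda>\theta$, and a cofinal subset of $\alpha$ of order type $\theta$ carries no stationarity in a regular cardinal to press down on. This is exactly why the paper's proof (Theorem~\ref{thmc}) takes a completely different route: from a \emph{single} point $\alpha\in S_0\cap C$ above $\lambda$ it extracts a family witnessing the meeting-number identity $m(\theta,\lambda,\theta,2)=\lambda$---a statement in which stationary sets no longer occur---upgrades it to $m(\lambda,\lambda,\theta,2)=\lambda$ via Lemma~\ref{lemma23b}, and then builds a witness for $S$ from scratch: the hypothesis $S\in I[\lambda^+]$ is converted into $S\subseteq A(d)$ modulo a club for a suitable $d:\lambda^+\times\lambda^+\rightarrow\lambda$ (for $\lambda$ singular via \cite[Corollary~2.15]{MR2078366}, approachable points being good points of a scale; for $\lambda$ regular via $\ads_\lambda$, which holds trivially), and Lemma~\ref{lemma23} manufactures the required $\langle f_i\rangle$ out of the injectivity encoded in $A(d)$, the meeting-number family, and surjections $e_\beta:\lambda\rightarrow\beta+1$. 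None of these ingredients---meeting numbers, scales, the sets $A(d)$---has a counterpart in your sketch.

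Second, your reduction of the ``in particular'' clause rests on the claim that, for regular $\lambda$, $E^{\lambda^+}_\theta\in I[\lambda^+]$ holds outright in ZFC. Your internally-approachable-chain argument does not establish this: when $\lambda^{<\theta}>\lambda$, at limit stages the accumulated initial segment of your threading recursion need not belong to any model below $\alpha$, and what survives of the argument is only Shelah's theorem \cite{Sh:351} that \emph{stationarily many} points of $E^{\lambda^+}_\theta$ are approachable---which is what the paper's proof of the implication $(1)\implies(2)$ of Theorem~\ref{thmc} invokes, not the full-set claim. (For $\theta=\omega$ your claim is correct, but the statement covers all regular $\theta<\lambda$.) The paper's derivation of the ``in particular'' clause needs no approachability at all: for regular $\lambda$, Proposition~\ref{sufficient}(2) supplies a single map $d$ with $E^{\lambda^+}_{\neq\lambda}\subseteq A(d)$, whence $E^{\lambda^+}_\theta=A(d)\cap E^{\lambda^+}_\theta\in J[\lambda^+]$ by Lemma~\ref{lemma23} once the meeting number has been computed.
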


\subsection{Notation and preliminaries} 
The definition of $\kappa$-Aronszajn trees and (uniformly coherent) $\kappa$-Souslin trees may be found in \cite[\S2]{paper22}.
We write $[\Delta]^\theta$ (resp.~$[\Delta]^{<\theta}$) for the collection of all subsets of $\Delta$ of cardinality (resp.~less than) $\theta$.
For a set $x$ of ordinals, we let $\acc(x) := \{\alpha\in x \mid \sup(x \cap \alpha) = \alpha > 0\}$,
and $\nacc(x):=x\setminus\acc(x)$.
A stationary subset $T$ of a regular uncountable cardinal $\kappa$ is said to \emph{reflect}
iff there exists an ordinal $\alpha<\kappa$ of uncountable cofinality such that $T\cap\alpha$ is stationary in $\alpha$.
$E^\kappa_\theta$ stands for $\{\alpha<\kappa\mid \cf(\alpha)=\theta\}$,
and $S^m_n$ stands for $E^{\aleph_m}_{\aleph_n}$.
$\ns_\kappa$ stands for the nonstationary ideal over $\kappa$,
$I[\kappa]$ stands for Shelah's approachability ideal (see \cite[Definition~3.3]{eisworth}),
and the definition of the ideal $J[\kappa]$ is reproduced in Section~\ref{theideal}.

\newpage
\section{Meetings numbers}
Throughout this section, 
$\lambda$ denotes an infinite cardinal,
$\theta$ denotes an infinite regular cardinal,
and $\chi$ is either $2$ or an infinite cardinal.

\begin{defn}[meeting numbers] Let $\Delta$ be some set of size $\ge\theta$.
\begin{itemize}
\item \cite[Definition~2.7]{Matet2021} $m(\theta,\lambda)$ stands for the least size of a subfamily $\mathcal Y\s[\lambda]^\theta$ 
such that, for every $X\in[\lambda]^\theta$, there exists $Y\in\mathcal Y$ with $|X\cap Y|=\theta$;
\item $m(\Delta,\lambda,\theta)$ stands for $m(\Delta,\lambda,\theta,2)$, where:
\item $m(\Delta,\lambda,\theta,\chi)$ stands for the least size of a family $\mathcal H$ of functions from $\Delta$ to $[\lambda]^{<\chi}$
such that, for every $X\in[\Delta]^\theta$ and every function $g:X\rightarrow\lambda$,
there exists $h\in\mathcal H$ with $|\{ \xi\in X\mid g(\xi)\in h(\xi)\}|=\theta$.
\end{itemize}
\end{defn}

\begin{prop}\label{prop22} \begin{enumerate}
\item If $\theta=\cf(\lambda)<\lambda$, then $m(\theta,\lambda)>\lambda$;
\item If $\theta<\lambda$, then $m(\theta,\lambda)\ge\lambda$;
\item If $\theta<\lambda<\theta^{+\theta}$, then $m(\theta,\lambda)=\lambda$.
\end{enumerate}
\end{prop}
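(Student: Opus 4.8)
The plan is to derive all three clauses from two elementary constructions — a ``disjoint escape'' argument for the lower bounds in Clauses (1) and (2), and an inductive ``localization'' argument for the upper bound in Clause (3) — using throughout the fact that for infinite cardinals $a,b$ one has $a\cdot b=\max(a,b)$.

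For Clause (2) I would argue contrapositively: given any $\mathcal Y\s[\lambda]^\theta$ with $|\mathcal Y|<\lambda$, its union satisfies $|\bigcup\mathcal Y|\le|\mathcal Y|\cdot\theta=\max(|\mathcal Y|,\theta)<\lambda$, so $\lambda\setminus\bigcup\mathcal Y$ still has size $\lambda\ge\theta$, and any $X\in[\lambda\setminus\bigcup\mathcal Y]^\theta$ meets every member of $\mathcal Y$ in the empty set; hence $\mathcal Y$ is not a meeting family, giving $m(\theta,\lambda)\ge\lambda$. Clause (1) refines this when $\theta=\cf(\lambda)$, where I would show that even a family $\langle Y_\xi\mid\xi<\lambda\rangle$ of size $\lambda$ fails. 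Fixing an increasing sequence $\langle\lambda_j\mid j<\theta\rangle$ cofinal in $\lambda$, I recursively pick distinct $x_j\in\lambda\setminus\bigcup\{Y_\xi\mid\xi<\lambda_j\}$; each such union (together with the fewer than $\theta$ previously chosen points) has size $<\lambda$ by the same arithmetic, so the choice is always possible. Setting $X=\{x_j\mid j<\theta\}$, every $\xi$ has a threshold $j_\xi<\theta$ with $\xi<\lambda_{j_\xi}$, beyond which $x_j\notin Y_\xi$; thus $|X\cap Y_\xi|\le|j_\xi|<\theta$, so no family of size $\le\lambda$ is a meeting family and $m(\theta,\lambda)>\lambda$.

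For Clause (3) the lower bound is Clause (2), so the content is the upper bound $m(\theta,\lambda)\le\lambda$, which I would prove by induction on $\lambda$ in the interval $(\theta,\theta^{+\theta})$, anchored by the trivial family $\{\theta\}$ witnessing $m(\theta,\theta)=1$. The structural key is that for $\theta<\lambda<\theta^{+\theta}$ one always has $\cf(\lambda)\ne\theta$: writing $\lambda=\theta^{+\beta}$ with $0<\beta<\theta$, if $\beta$ is a successor then $\lambda$ is regular and $\cf(\lambda)=\lambda>\theta$, while if $\beta$ is a limit then $\cf(\lambda)=\cf(\beta)\le|\beta|<\theta$. I then split accordingly. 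If $\cf(\lambda)>\theta$, every $X\in[\lambda]^\theta$ is bounded, so $X\in[\alpha]^\theta$ for some $\alpha<\lambda$ with $\theta\le|\alpha|<\lambda$; taking for each such $\alpha$ an inductively-given meeting family on $\alpha$ of size $|\alpha|$ and unioning gives a family of size $\sum_{\alpha<\lambda}|\alpha|=\lambda$ that captures every $X$. If $\cf(\lambda)<\theta$, I fix a cofinal $\langle\lambda_j\mid j<\cf(\lambda)\rangle$ and observe that for any $X\in[\lambda]^\theta$ the regularity of $\theta$ forces some block $X\cap[\lambda_j,\lambda_{j+1})$ to have size $\theta$, since there are only $\cf(\lambda)<\theta$ blocks; capturing these blocks by inductively-given meeting families on the intervals $[\lambda_j,\lambda_{j+1})$, each of size $<\lambda$, and unioning again yields a meeting family of size $\le\lambda$.

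The main obstacle is the bookkeeping in Clause (3): I must check that the cofinality dichotomy is exhaustive, which is exactly the arithmetic fact $\cf(\lambda)\ne\theta$ that keeps $\lambda$ inside the range where the induction closes — note that $\theta^{+\theta}$ itself is excluded precisely because $\cf(\theta^{+\theta})=\theta$, placing it in the regime of Clause (1). I must also ensure the induction hypothesis is used in the correct form, namely $m(\theta,\mu)=\mu$ for all cardinals $\mu$ with $\theta<\mu<\lambda$, with the degenerate cases $|\alpha|=\theta$ (resp.\ a block of size $\theta$) absorbed by the anchor $m(\theta,\theta)=1$ and blocks of size $<\theta$ simply irrelevant. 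Once these points are settled, the cardinal-sum estimates are routine.
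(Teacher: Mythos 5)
Your proposal is correct and follows essentially the same route as the paper: Clause (1) by the K\"onig-style diagonalization the paper alludes to, Clause (2) by the obvious counting argument, and Clause (3) by induction on $\lambda\in(\theta,\theta^{+\theta})$, localizing each $X\in[\lambda]^\theta$ either below some $\alpha<\lambda$ (when $\cf(\lambda)>\theta$, which matches the paper's base and successor steps via bijective transfer) or into a block of a short cofinal decomposition (when $\cf(\lambda)<\theta$, which is the paper's limit step with intervals in place of initial segments). Your cofinality dichotomy coincides case-by-case with the paper's successor/limit split, and your observation that $\cf(\theta^{+\theta})=\theta$ explains the sharpness of the hypothesis is a nice touch, but the underlying argument is the same.
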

\begin{proof} (1) By a diagonalization argument similar to the proof of K\"onig's theorem.

(2) This is clear.

(3) By induction on $\lambda$. For the base case $\lambda:=\theta^+$,
the set $\mathcal Y_\lambda:=\{ \theta+\alpha\mid \alpha<\lambda\}$ witnesses that $m(\theta,\lambda)=\lambda$.
Next, given a cardinal $\lambda$ 
and a family $\mathcal Y_\lambda$ witnessing that $m(\theta,\lambda)=\lambda$,
we fix a bijection $e_\alpha:\lambda\leftrightarrow\lambda+\alpha$ for every $\alpha<\lambda^+$, 
and then set $\mathcal Y_{\lambda^+}:=\{ e_\alpha[Y]\mid Y\in\mathcal Y, \alpha<\lambda^+\}$.
Clearly, $\mathcal Y_{\lambda^+}$ witnesses that $m(\theta,\lambda^+)=\lambda^+$.

Finally, for every limit cardinal $\lambda$ with $\theta<\lambda<\theta^{+\theta}$
such that for every cardinal $\mu\in(\theta,\lambda)$, there is a family $\mathcal Y_\mu$ witnessing that $m(\theta,\mu)=\mu$,
we let $\mathcal Y_\lambda:=\bigcup\{\mathcal Y_\mu\mid \theta<\mu=|\mu|<\lambda\}$,
so that $|\mathcal Y_\lambda|=\lambda$.
As $\lambda$ is a limit cardinal in  $(\theta,\theta^{+\theta})$, $\cf(\lambda)<\theta$.
In effect, for every set $X\in[\lambda]^\theta$, there exists some cardinal $\mu$ with $\theta<\mu<\lambda$ such that $X\cap\mu\in[\mu]^\theta$,
and then we can find $Y\in\mathcal Y_\mu\s\mathcal Y_\lambda$ such that $|X\cap Y|=\theta$.
\end{proof}

To help the reader digest the $3$-cardinal and $4$-cardinal variants, we offer the next proposition.

\begin{prop}\label{bprop} $m(\aleph_0,\aleph_0,\aleph_0)$ is equal to the least size of a nonmeager set of reals,
and $m(\theta,\theta,\theta,\theta)=\mathfrak b_\theta$.
\end{prop}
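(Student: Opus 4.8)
For the first equality, the plan is to unwind the definition at $\chi=2$ and then appeal to Bartoszy\'nski's combinatorial description of nonmeagerness. Since $[\aleph_0]^{<2}$ consists of the empty set and singletons, a function $h:\aleph_0\to[\aleph_0]^{<2}$ is, modulo replacing each value $\emptyset$ by an arbitrary singleton (which only adds potential matches), just a function $h:\omega\to\omega$, and the clause $g(\xi)\in h(\xi)$ becomes $g(\xi)=h(\xi)$. So $m(\aleph_0,\aleph_0,\aleph_0)$ is the least size of a family $\mathcal H$ of functions $h:\omega\to\omega$ such that every partial $g:X\to\omega$ with $X\in[\omega]^{\aleph_0}$ is matched by some $h\in\mathcal H$ at infinitely many points of $X$. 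I would prove this equals $\mathrm{non}(\mathcal M)$, the least size of a nonmeager set of reals, by two inequalities.

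For the inequality $\le$, I would fix a nonmeager $A\s\omega^\omega$ of size $\mathrm{non}(\mathcal M)$ and use Bartoszy\'nski's characterization: $A$ is nonmeager iff for every partition of $\omega$ into consecutive finite intervals $\langle I_n\mid n<\omega\rangle$ and every $y\in\omega^\omega$, there is $x\in A$ with $x\restriction I_n=y\restriction I_n$ for infinitely many $n$. Given a partial $g:X\to\omega$, I would enumerate $X=\{x_n\mid n<\omega\}$ increasingly, form the partition with $I_0:=[0,x_1)$ and $I_n:=[x_n,x_{n+1})$ for $n\ge 1$ (so that each $I_n$ contains exactly the one point $x_n$ of $X$), and let $y$ agree with $g$ on $X$. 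Any $x\in A$ matching $y$ on infinitely many intervals then satisfies $x(x_n)=g(x_n)$ for infinitely many $n$, so the fixed set $A$ is itself a witnessing family and $m(\aleph_0,\aleph_0,\aleph_0)\le|A|$. For the reverse inequality $\ge$, any witnessing family handles in particular the total functions $g:\omega\to\omega$ (the case $X=\omega$), hence is an infinitely-often-equal family, and so has size at least $\mathrm{non}(\mathcal M)$ by Bartoszy\'nski's theorem.

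For the second equality I work at $\chi=\theta$, where each value $h(\xi)\in[\theta]^{<\theta}$ is, by regularity of $\theta$, a bounded subset of $\theta$; recall that $\mathfrak b_\theta$ is the least size of a $<^*$-unbounded family in $\theta^\theta$, where $f<^*g$ means $\{\xi<\theta\mid g(\xi)\le f(\xi)\}$ is bounded. For $m(\theta,\theta,\theta,\theta)\ge\mathfrak b_\theta$, I would take a witnessing family $\mathcal H$ and attach to each $h\in\mathcal H$ the function $f_h\in\theta^\theta$ given by $f_h(\xi):=\sup(h(\xi))+1$. Were some single $g^*\in\theta^\theta$ to dominate every $f_h$, then for each $h$ we would have $\{\xi<\theta\mid g^*(\xi)\in h(\xi)\}\s\{\xi<\theta\mid g^*(\xi)<f_h(\xi)\}$, a bounded set, so no $h$ could match $g^*$ on a set of size $\theta$, contradicting that $\mathcal H$ witnesses the meeting number for $X:=\theta$ and $g:=g^*$. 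Hence $\{f_h\mid h\in\mathcal H\}$ is unbounded and $|\mathcal H|\ge\mathfrak b_\theta$. For $m(\theta,\theta,\theta,\theta)\le\mathfrak b_\theta$, I would fix a $<^*$-unbounded family $B\s\theta^\theta$ of size $\mathfrak b_\theta$ consisting of strictly increasing functions (replacing each member by a pointwise-larger increasing function preserves unboundedness and cardinality), and set $h_f(\xi):=f(\xi)=[0,f(\xi))$. Given $X\in[\theta]^\theta$ with increasing enumeration $\langle x_\alpha\mid\alpha<\theta\rangle$ and $g:X\to\theta$, unboundedness applied to the map $\alpha\mapsto g(x_\alpha)$ yields $f\in B$ with $\{\alpha<\theta\mid g(x_\alpha)<f(\alpha)\}$ of size $\theta$; since $f$ is increasing and $x_\alpha\ge\alpha$, each such $\alpha$ gives $g(x_\alpha)<f(\alpha)\le f(x_\alpha)$, i.e.\ $g(x_\alpha)\in h_f(x_\alpha)$, so $h_f$ matches $g$ on a set of size $\theta$.

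The routine reductions aside, I expect the main obstacle to lie in the first equality, and specifically in the passage between the \emph{interval}-based combinatorics underlying nonmeagerness and the \emph{pointwise, partial-function} matching demanded by the meeting number. The interval partition read off from the domain $X$ is the device that bridges the two, and the delicate point is checking that a single fixed nonmeager set serves as a witness for \emph{every} partial function simultaneously, rather than one function at a time; by contrast, the second equality is a direct translation between bounded-set–valued functions and their suprema, where the monotone unbounded family and the inequality $x_\alpha\ge\alpha$ do all the work.
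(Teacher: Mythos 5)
Your proposal is correct. For the second equality your argument coincides with the paper's: identify an ordinal value $f(\xi)<\theta$ with the initial segment $[0,f(\xi))\in[\theta]^{<\theta}$, use an unbounded family of strictly increasing functions together with the inequality $x_\alpha\ge\alpha$ (the paper's $\pi^{-1}(\xi)\le\xi$) for $m(\theta,\theta,\theta,\theta)\le\mathfrak b_\theta$, and send $h$ to $\xi\mapsto\sup(h(\xi))+1$ for the converse. The one shared, harmless imprecision is that unboundedness directly yields only cofinally many $\xi$ with $g(\xi)\le f(\xi)$; to get the strict inequality one should apply it to $\xi\mapsto g(\xi)+1$ (the paper glosses over this too). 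Where you genuinely differ is the first equality: the paper's entire proof of that part is the citation \cite[Lemma~2.4.8(2)]{MR1350295}, which is essentially verbatim the statement that $\operatorname{non}(\mathcal M)$ is the least size of a family of functions meeting every partial function $g:X\rightarrow\omega$ with infinite domain at infinitely many points. You instead derive this from two more basic results of Bartoszy\'nski --- the interval-partition characterization of nonmeager subsets of $\omega^\omega$ and the infinitely-often-equal (total-function) characterization of $\operatorname{non}(\mathcal M)$ --- and your bridge between them, the partition of $\omega$ into intervals each containing exactly one point of $X$, is exactly the right device: agreement on an interval forces agreement at its unique point of $X$, so a single nonmeager set witnesses the partial-function property for all $g$ simultaneously, while the $\ge$ direction needs only the total case. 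In effect you have reproved the cited lemma; your route is more self-contained (modulo the two standard theorems you quote), while the paper's is shorter. Two trivial remarks: your interval $I_0=[0,x_1)$ contains $x_0$ rather than ``the point $x_n$'', and you silently use that the uniformity of the meager ideal is the same for $\mathbb R$ as for $\omega^\omega$; both points are harmless.
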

\begin{proof} The first part is given by \cite[Lemma~2.4.8(2)]{MR1350295}.
We shall next prove that 
 $m(\theta,\theta,\theta,\theta)\le \mathfrak b_\theta$,
 and then prove that $m(\theta,\theta,\theta,\theta)\ge \mathfrak b_\theta$.

$(\le)$: Let $\mathcal H\s{}^\theta\theta$ be an unbounded family in $({}^\theta\theta,{<^*})$.
Without loss of generality, we may assume that $\mathcal H$ consists of strictly increasing functions.
Each $h\in\mathcal H$ is a function from $\theta$ to $\theta$, but since every element of $\theta$ may be understood as an element of $[\theta]^{<\theta}$,
the function $h$ may be understood as a function from $\theta$ to $[\theta]^{<\theta}$.
Now, suppose that we are given a function $g:X\rightarrow\theta$ with $X\in[\theta]^\theta$.
Let $\pi:\theta\leftrightarrow X$ denote the increasing enumeration of $X$.
Set $f:=g\circ \pi$.
As $\mathcal H$ is unbounded, let us pick some $h\in\mathcal H$ such that $T:=\{ \tau<\theta\mid f(\tau)<h(\tau)\}$ has size $\theta$.
Now, $X':=\pi[T]$ is an element of $[X]^\theta$, and for every $\xi\in X'$,
if we let $\tau:=\pi^{-1}(\xi)$,
then $\tau\le \xi$ and $g(\xi)=f(\tau)<h(\tau)\le h(\xi)$,
so that $g(\xi)\in h(\xi)$, as sought.

\medskip

$(\ge)$: Let $\mathcal H$ be a family of functions from $\theta$ to $[\theta]^{<\theta}$
with the property that for every $X\in[\theta]^\theta$ and every function $g:X\rightarrow\theta$,
there exists $h\in\mathcal H$ for which $\{ \xi\in X\mid g(\xi)\in h(\xi)\}$ has size $\theta$.
For each $h\in\mathcal H$, define $h':\theta\rightarrow\theta$ via $h'(\xi):=\sup(h(\xi))+1$.
To see that $\{ h'\mid h\in\mathcal H\}$ is unbounded in $({}^\theta\theta,{<^*})$, let $g:\theta\rightarrow\theta$ be arbitrary.
Pick $h\in\mathcal H$ such that $\{ \xi<\theta\mid g(\xi)\in h(\xi)\}$ has size $\theta$.
Then $\neg(h'<^* g)$.
\end{proof}

\begin{lemma}\label{lemma23b} Suppose that $\theta<\lambda$. Then:
\begin{enumerate}
\item $\lambda\le m(\theta,\lambda)\le \min\{\cf([\lambda]^\theta,{\s}),\cf([\lambda]^\theta,{\supseteq})\}$;
\item if $\chi\le\theta^+$, then $m(\theta,\lambda)\le m(\theta,\lambda,\theta,\chi)$;
\item $m(\theta,\theta,\theta,\chi)\le m(\lambda,\lambda,\theta,\chi)\le\max\{m(\theta,\theta,\theta,\chi),m(\theta,\lambda)\}$;
\end{enumerate}
\end{lemma}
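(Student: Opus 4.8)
For (1) the lower bound $\lambda\le m(\theta,\lambda)$ is precisely Proposition~\ref{prop22}(2), so I would only argue the two upper bounds, and both reduce to the observation that a cofinal family is automatically a meeting family. If $\mathcal Y\s[\lambda]^\theta$ realizes $\cf([\lambda]^\theta,{\s})$ and $X\in[\lambda]^\theta$, then choosing $Y\in\mathcal Y$ with $X\s Y$ gives $X\cap Y=X$ of size $\theta$, so $\mathcal Y$ witnesses $m(\theta,\lambda)\le\cf([\lambda]^\theta,{\s})$. Dually, if $\mathcal Y$ realizes $\cf([\lambda]^\theta,{\supseteq})$ and $Y\in\mathcal Y$ satisfies $Y\s X$, then $X\cap Y=Y$ has size $\theta$, giving the bound by $\cf([\lambda]^\theta,{\supseteq})$; intersecting the two bounds finishes (1).

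For (2), assume $\chi\le\theta^+$ and let $\mathcal H$ witness $m(\theta,\lambda,\theta,\chi)$. The role of the hypothesis is that each value $h(\xi)\in[\lambda]^{<\chi}$ then has size $\le\theta$, so $Y_h:=\bigcup_{\xi<\theta}h(\xi)$ lies in $[\lambda]^{\le\theta}$. I would check that $\{Y_h\mid h\in\mathcal H,\ |Y_h|=\theta\}$ witnesses $m(\theta,\lambda)$. Given $X\in[\lambda]^\theta$, fix an injection $\pi:\theta\to\lambda$ with $\pi[\theta]=X$ and feed the domain $\theta$ together with $g:=\pi$ into the defining property of $\mathcal H$; this yields $h\in\mathcal H$ with $|\{\xi<\theta\mid \pi(\xi)\in h(\xi)\}|=\theta$. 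Since $\pi$ is injective and each such $\pi(\xi)$ lies in $X\cap Y_h$, we get $|X\cap Y_h|=\theta$, which in particular forces $|Y_h|=\theta$.

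For the first inequality of (3), I would restrict a witness $\mathcal H$ for $m(\lambda,\lambda,\theta,\chi)$ down to $\theta$: the family $\{\xi\mapsto h(\xi)\cap\theta\mid h\in\mathcal H\}$ consists of functions from $\theta$ to $[\theta]^{<\chi}$. Given $X\in[\theta]^\theta$ and $g:X\to\theta$, regard them as living in $\lambda$ and pick $h\in\mathcal H$ meeting $g$; whenever $g(\xi)\in h(\xi)$ we have $g(\xi)<\theta$, hence $g(\xi)\in h(\xi)\cap\theta$, so the restricted family meets $g$ on the very same set. This proves $m(\theta,\theta,\theta,\chi)\le m(\lambda,\lambda,\theta,\chi)$.

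The substantial part is $m(\lambda,\lambda,\theta,\chi)\le\max\{m(\theta,\theta,\theta,\chi),m(\theta,\lambda)\}$, where the plan is to transport a problem on $\lambda$ down to $\theta$. Fix witnesses $\mathcal Y$ for $m(\theta,\lambda)$ and $\mathcal H_0$ for $m(\theta,\theta,\theta,\chi)$, together with bijections $e_Y:\theta\leftrightarrow Y$ for $Y\in\mathcal Y$. For $Y,Z\in\mathcal Y$ and $h_0\in\mathcal H_0$ let $h_{Y,Z,h_0}(\xi):=e_Z[h_0(e_Y^{-1}(\xi))]$ for $\xi\in Y$ and $\emptyset$ otherwise, and adjoin the constant functions $h^v\equiv\{v\}$ for $v\in\lambda$; since $m(\theta,\lambda)\ge\lambda$, the whole family has size at most $\max\{m(\theta,\theta,\theta,\chi),m(\theta,\lambda)\}$. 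Given $X\in[\lambda]^\theta$ and $g:X\to\lambda$, the key difficulty is that $g$ may be far from injective, so I split on the size of $g[X]$. If some fibre $g^{-1}(v)$ has size $\theta$, then $h^v$ meets $g$; this is the one case the transport cannot absorb, and it is affordable precisely because $m(\theta,\lambda)\ge\lambda$ lets us keep all $\lambda$ constant functions. Otherwise the regularity of $\theta$ forces $|g[X]|=\theta$, and I would capture the range by some $Z\in\mathcal Y$ with $|g[X]\cap Z|=\theta$, restrict to $X_1:=g^{-1}[Z]$, capture its domain by some $Y\in\mathcal Y$ with $X_2:=X_1\cap Y$ of size $\theta$, and set $W:=e_Y^{-1}[X_2]$ and $g_2(\tau):=e_Z^{-1}(g(e_Y(\tau)))$ for $\tau\in W$. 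Applying $\mathcal H_0$ to $(W,g_2)$ gives $h_0$ meeting $g_2$ on a set $T$ of size $\theta$; unwinding the two bijections shows $g(\xi)\in h_{Y,Z,h_0}(\xi)$ for every $\xi\in e_Y[T]\s X$, a set of size $\theta$, as required.
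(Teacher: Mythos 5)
Your proposal is correct and takes essentially the same approach as the paper: for part (3) you build the identical transported family (the paper's $h^Y_Z$ together with the constant functions $c_\gamma$) and split into the same two cases, namely a fiber of $g$ of size $\theta$ versus all fibers of size $<\theta$, and your arguments for (1), (2), and the first inequality of (3) are the standard ones the paper treats as clear. The only difference is cosmetic: in the non-constant case you capture the range of $g$ before capturing the domain, which lets you skip the paper's preliminary reduction to an injective $g$, but the construction and the unwinding of the two bijections are the same.
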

\begin{proof} (1) This is clear.

(2) Suppose that $\chi\le\theta^+$,
and let $\mathcal H$ be a family witnessing the value of $m(\theta,\lambda,\theta,\chi)$. Set $\mathcal Y:=\{\bigcup\im(h)\mid h\in\mathcal H\}\cap[\lambda]^\theta$.
Now, given $X\in[\lambda]^\theta$, if we let $g:\theta\leftrightarrow X$ be some bijection, then we may find $h\in\mathcal H$
with $|\{ \xi\in X\mid g(\xi)\in h(\xi)\}|=\theta$. So $Y:=\bigcup\im(h)$ satisfies $|Y\cap X|=\theta$.
In particular, $|Y|\ge\theta$. As $\chi\le\theta^+>|\dom(h)|$, it altogether follows that $|Y|=\theta$, so that $Y\in \mathcal Y$.

(3) 
We focus on the second inequality.
Let $\mathcal H$ be a family witnessing the value of $m(\theta,\theta,\theta,\chi)$,
and let $\mathcal Y\s[\lambda]^\theta$ be a family witnessing the value of $m(\theta,\lambda)$.
For each $Y\in\mathcal Y$, fix a bijection $f_Y:\theta\leftrightarrow Y$.
For all $h:\theta\rightarrow[\theta]^{<\chi}$ in $\mathcal H$ and $Y,Z\in\mathcal Y$,
define $h^Y_Z:\lambda\rightarrow[Z]^{<\chi}$ via:
$$h^Y_Z(\xi):=\begin{cases}
\emptyset,&\text{if }\xi\notin Y;\\
f_Z[h(f_{Y}^{-1}(\xi))]&\text{otherwise}.
\end{cases}$$
For every $\gamma<\lambda$, let $c_\gamma:\lambda\rightarrow[\lambda]^1$
denote the unique function to satisfy $c_\gamma(\xi)=\{\gamma\}$ for all $\xi<\lambda$.
Set $\mathcal H^*:=\{ h^Y_Z,c_\gamma\mid h\in\mathcal H,\  Y,Z\in\mathcal Y,\ \gamma<\lambda\}$,
so that $|\mathcal H^*|=\max\{|\mathcal H|,|\mathcal Y|\}$.
\begin{claim} Let $g:X\rightarrow\lambda$ with $X\in[\lambda]^\theta$.
Then there exists $h\in\mathcal H^*$ with $|\{ \xi\in X\mid g(\xi)\in h(\xi)\}|=\theta$.
\end{claim}
\begin{proof} 
If there exists $X'\in[X]^\theta$ such that $g\restriction X'$ is constant,
then there exists some $\gamma<\lambda$ such that $|\{ \xi\in X\mid g(\xi)\in c_\gamma(\xi)\}|=\theta$, and we are done.

Assume now that $g$ is ${<}\theta$-to-one. 
Since $\theta$ is regular and by passing to a subset of $X$ of size $\theta$, we may assume that $g$ is injective.
Pick $Y\in\mathcal Y$ such that $X\cap Y$ has size $\theta$.
As $g$ is injective, $g[X\cap Y]$ has size $\theta$, so we may pick $Z\in\mathcal Y$ such that $g[X\cap Y]\cap Z$ has size $\theta$.

By the choice of $Z$, $T:=\{\tau<\theta\mid f_Y(\tau)\in X\ \&\ g(f_Y(\tau))\in Z\}$ is in $[\theta]^\theta$.
Set $g':=(f_Z^{-1}\circ g\circ f_Y)\restriction T$, so that $g'$ is a function from $T$ to $\theta$.
Next, pick $h\in\mathcal H$ such that $T^*:=\{ \tau\in T\mid g'(\tau)\in h(\tau)\}$ has size $\theta$.
In effect, $X^*:=f_Y[T^*]$ is a subset of $X$ of size $\theta$.
Let $\xi\in X^*$. Then $\xi\in Y$, $f_Y^{-1}(\xi)\in T^*\s T$ and
$$g(\xi)=g(f_Y(f_Y^{-1}(\xi)))=f_Z(g'(f_Y^{-1}(\xi)))\in f_Z[h(f_Y^{-1}(\xi))]=h^Y_Z(\xi),$$
as sought.
\end{proof}
This completes the proof.
\end{proof}

\section{The ideal}\label{theideal}

Throughout this section, 
$\kappa$ denotes a regular uncountable cardinal,
$\lambda$ denotes an infinite cardinal,
$\theta$ denotes an infinite regular cardinal,
and $\chi$ is either $2$ or an infinite cardinal.

\begin{defn}\label{regressive}  $J_\chi[\kappa]$
stands for the collection of all subsets $S\s\kappa$ for which there exist a club $C\s\kappa$ and a sequence of functions  $\langle f_i:\kappa\rightarrow[\kappa]^{<\chi}\mid i<\kappa\rangle$ 
with the property that for every $\alpha\in S\cap C$, every regressive function $f:\alpha\rightarrow\alpha$,
and every cofinal subset $B\s\alpha$, there exists $i<\alpha$ such that $\sup\{ \beta\in B\mid f(\beta)\in f_i(\beta)\}=\alpha$.
\end{defn}

The ideal $J[\kappa]$ of \cite[Definition~2.1]{paper24} is nothing but $J_2[\kappa]$.
The two objects are quite close, 
as $\{ S\in J_\chi[\kappa]\mid S\s E^\kappa_{>\chi}\}\s J_2[\kappa]\s J_\chi[\kappa]$.
Also, the same proof of \cite[Proposition~2.14]{paper24} yields the following proposition.
\begin{prop}\label{inaccessible} If $\chi<\kappa$ and $\kappa$ is inaccessible,
then $J_\chi[\kappa]=\ns_\kappa$.\qed
\end{prop}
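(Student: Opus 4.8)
The plan is to prove the two inclusions separately, the easy one being $\ns_\kappa\s J_\chi[\kappa]$: given a nonstationary $S$, I fix a club $C$ disjoint from it, so that $S\cap C=\emptyset$ and the requirement in Definition~\ref{regressive} holds vacuously for any choice of $\langle f_i\mid i<\kappa\rangle$. The whole content therefore lies in the reverse inclusion $J_\chi[\kappa]\s\ns_\kappa$, which I would establish contrapositively: assuming $S\s\kappa$ is stationary, I aim to show $S\notin J_\chi[\kappa]$, i.e.\ that no club $C$ and sequence $\langle f_i:\kappa\to[\kappa]^{<\chi}\mid i<\kappa\rangle$ can witness membership. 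So I fix such a candidate pair and search for a single point $\alpha\in S\cap C$ at which the defining demand fails, namely, a point admitting a regressive $f:\alpha\to\alpha$ and a cofinal $B\s\alpha$ with $\sup\{\beta\in B\mid f(\beta)\in f_i(\beta)\}<\alpha$ for every $i<\alpha$.

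The role of inaccessibility is to guarantee a \emph{good} choice of $\alpha$. Since $\kappa$ is inaccessible, the limit cardinals below $\kappa$ form a club $E$ (closure is clear, and unboundedness follows from $\kappa$ being a regular limit cardinal). As $S$ is stationary and $C,E$ are clubs, I may pick $\alpha\in S\cap C\cap E$ with $\alpha>\chi$. Thus $\alpha$ is a limit cardinal exceeding $\chi$, and I fix a strictly increasing sequence of cardinals $\langle\alpha_\xi\mid\xi<\mu\rangle$ cofinal in $\alpha$, with $\alpha_0>\chi$, where $\mu:=\cf(\alpha)$.

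The construction at $\alpha$ is a staggered diagonalization designed to sidestep the main obstacle, which is the possibility that $\alpha$ is singular: with fewer slots ($\mu$) than functions to defeat (up to $|\alpha|=\alpha$), one cannot assign a fixed threshold to each $f_i$ while keeping every slot lightly loaded. The fix is to exploit the growth of cardinalities along the sequence. Set $B:=\{\alpha_{\xi+1}\mid\xi<\mu\}$, which is cofinal in $\alpha$. For $\beta=\alpha_{\xi+1}\in B$, the set $\bigcup_{i<\alpha_\xi}(f_i(\alpha_{\xi+1})\cap\alpha_{\xi+1})$ has size at most $|\alpha_\xi|\cdot\chi=\alpha_\xi<\alpha_{\xi+1}$ (here $\chi<\alpha_0\le\alpha_\xi$ is used), hence is a proper subset of $\alpha_{\xi+1}$; let $f(\alpha_{\xi+1})$ be the least ordinal in the complement, and set $f(\beta):=0$ for $\beta\notin B$. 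Then $f$ is regressive.

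Finally I would verify that this $f$ and $B$ defeat every $f_i$. Fixing $i<\alpha$ and choosing $\xi_i<\mu$ with $i<\alpha_{\xi_i}$, the construction forces $f(\alpha_{\xi+1})\notin f_i(\alpha_{\xi+1})$ for all $\xi\ge\xi_i$, whence $\{\beta\in B\mid f(\beta)\in f_i(\beta)\}\s\{\alpha_{\xi+1}\mid\xi<\xi_i\}$ is bounded below $\alpha$. This contradicts the defining property of $J_\chi[\kappa]$ at the point $\alpha\in S\cap C$, so $S\notin J_\chi[\kappa]$, as required. The only genuinely delicate point is the room estimate in the singular case; once the cofinal sequence is taken to consist of cardinals of strictly increasing size and the functions are processed in the staggered fashion above, the count $|\alpha_\xi|\cdot\chi<\alpha_{\xi+1}$ does all the work.
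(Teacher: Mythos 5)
Your proof is correct and is essentially the paper's own argument: the paper omits the proof, deferring to \cite[Proposition~2.14]{paper24}, whose proof is exactly this staggered diagonalization at a limit cardinal $\alpha\in S\cap C$ (compare also the proof of Proposition~\ref{prop35}, which diagonalizes block-wise in the same way), with inaccessibility used precisely to make the limit cardinals below $\kappa$ a club. Your handling of general $\chi$ via the estimate $|\alpha_\xi|\cdot\chi=\alpha_\xi<\alpha_{\xi+1}$ (and intersecting $f_i(\beta)$ with $\beta$) is exactly the routine adjustment the paper intends by ``the same proof yields.''
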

Thus, hereafter we shall study $J_\chi[\kappa]$ when $\kappa$ is of the form $\lambda^+$.
The next proposition is an easy improvement of \cite[Proposition~2.8]{paper24}.
\begin{prop}\label{prop35} Suppose that $\chi<\lambda$.
For every $S\in J_\chi[\lambda^+]$, $S\cap E^{\lambda^+}_{\cf(\lambda)}$ is nonstationary.
\end{prop}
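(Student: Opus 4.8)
The plan is to argue by contradiction. Suppose that $S\cap E^{\lambda^+}_{\cf(\lambda)}$ is stationary, set $\vartheta:=\cf(\lambda)$, and fix a club $C\s\lambda^+$ together with a sequence $\langle f_i:\lambda^+\rightarrow[\lambda^+]^{<\chi}\mid i<\lambda^+\rangle$ witnessing that $S\in J_\chi[\lambda^+]$. Since every ordinal in the interval $(\lambda,\lambda^+)$ has cardinality $\lambda$, and $C\cap(\lambda,\lambda^+)$ is a club, I may pick a single $\alpha\in S\cap C\cap E^{\lambda^+}_{\cf(\lambda)}$ with $\alpha>\lambda$; thus $|\alpha|=\lambda$ and $\cf(\alpha)=\vartheta=\cf(\lambda)$. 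The goal is then to produce one regressive $f:\alpha\rightarrow\alpha$ and one cofinal $B\s\alpha$ for which no $i<\alpha$ satisfies $\sup\{\beta\in B\mid f(\beta)\in f_i(\beta)\}=\alpha$, directly contradicting the defining property of $\alpha\in S\cap C$ from Definition~\ref{regressive}.

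To this end, I would first exploit the coincidence $\cf(\alpha)=\cf(\lambda)=\vartheta$ to organize a diagonalization. Using a bijection between $\lambda$ and the index set $\alpha$, together with a cofinal sequence of cardinals in $\lambda$ when $\lambda$ is singular (resp.\ the identity when $\lambda$ is regular), I can write the index set $\alpha$ as an increasing continuous union $\bigcup_{j<\vartheta}R_j$ with $|R_j|<\lambda$ for every $j<\vartheta$, so that each fixed index $i<\alpha$ belongs to $R_j$ for all large enough $j$. Simultaneously, since $(\lambda,\alpha)$ is a final segment of $\alpha$ of cofinality $\vartheta$, I fix an increasing continuous sequence $\langle \alpha_j\mid j<\vartheta\rangle$ cofinal in $\alpha$ with $\alpha_0>\lambda$, and I let $B:=\{\alpha_{j+1}\mid j<\vartheta\}$, which is cofinal in $\alpha$ with $|\alpha_{j+1}|=\lambda$ for each $j$.

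Next I would define $f$ by a stage-by-stage choice along $B$: putting $f(\beta):=0$ for $\beta\in\alpha\setminus B$, and for $b_j:=\alpha_{j+1}$ choosing $f(b_j)<b_j$ outside the forbidden set $\bigcup_{i\in R_j}(f_i(b_j)\cap b_j)$. The crucial point---and the step where the hypothesis $\chi<\lambda$ is used---is that this forbidden set has cardinality at most $|R_j|\cdot\chi<\lambda=|b_j|$, so that room to choose $f(b_j)$ always remains. The resulting $f$ is regressive and, for each fixed $i<\alpha$, once $j$ is large enough that $i\in R_j$ the choice guarantees $f(b_j)\notin f_i(b_j)$; hence $\{\beta\in B\mid f(\beta)\in f_i(\beta)\}$ is bounded below $\alpha$ for every $i<\alpha$, delivering the desired contradiction.

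I expect the only genuinely delicate point to be the bookkeeping in the second paragraph: arranging the increasing decomposition $\langle R_j\mid j<\vartheta\rangle$ of the index set and the cofinal sequence $\langle\alpha_j\mid j<\vartheta\rangle$ so that, at every stage $j$, the indices still requiring protection number fewer than $\lambda$ while the stage $b_j$ still offers $\lambda$-many candidate values below it. Once $\cf(\alpha)=\cf(\lambda)$ and $|\alpha|=\lambda$ are in hand, this is precisely the combinatorial content that makes the count $|R_j|\cdot\chi<\lambda$ go through, and everything else is routine.
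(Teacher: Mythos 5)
Your proof is correct and takes essentially the same route as the paper's: both diagonalize along a cofinal sequence of order type $\cf(\lambda)$ converging to a single counterexample point $\alpha>\lambda$, at stage $j$ protecting only the $<\lambda$ many indices accumulated so far (your $R_j$ play the role of the paper's $\bigcup_{\eta<j}a_\eta$), which is precisely where $\chi<\lambda$ and $\cf(\alpha)=\cf(\lambda)$ enter. The only cosmetic difference is that the paper first reduces to regressive functions $f:\alpha\rightarrow\lambda$ and picks $f(\beta_j)$ as a minimum of a co-small subset of $\lambda$, whereas you choose $f(b_j)$ directly in $b_j\setminus\bigcup_{i\in R_j}\bigl(f_i(b_j)\cap b_j\bigr)$ using $|b_j|=\lambda$; the combinatorial content is identical.
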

\begin{proof} Suppose not. Then, in particular, we may fix $\alpha\in E^{\lambda^+}_{\cf(\lambda)}\setminus(\lambda+1)$ and a sequence of functions 
 $\langle f_i:\alpha\rightarrow[\lambda]^{<\chi}\mid i<\alpha\rangle$ satisfying the following.
For every regressive function $f:\alpha\rightarrow\lambda$,
and every cofinal subset $B\s\alpha$, there exists some $i<\alpha$ such that $\sup\{ \beta\in B\mid f(\beta)\in f_i(\beta)\}=\alpha$.
Let $\langle \beta_j \mid j<\cf(\lambda)\rangle$ be a strictly increasing sequence of ordinals, converging to $\alpha$, with $\beta_0>\lambda$.
Fix a decomposition $\alpha=\biguplus_{\eta<\cf(\lambda)}a_\eta$ such that $|a_\eta|<\lambda$ for all $\eta<\cf(\lambda)$.
Evidently, for every $j<\cf(\lambda)$, $|\bigcup_{\eta<j}\bigcup_{i\in a_\eta}\bigcup f_i(\beta_j)|<\lambda$.
Thus, we may pick a regressive function $f:\alpha\rightarrow\lambda$ satisfying that, for all $j<\cf(\lambda)$,
\begin{equation}\label{eq1}f(\beta_j)=\min\left(\lambda\setminus \bigcup_{\eta<j}\bigcup_{i\in a_\eta}\bigcup f_i(\beta_j)\right).\end{equation}
Set $B:=\{ \beta_j\mid j<\cf(\lambda)\}$ and fix $i<\alpha$ such that $\sup\{ \beta\in B\mid f(\beta)\in f_i(\beta)\}=\alpha$.
In particular, $J:=\{ j<\cf(\lambda)\mid f(\beta_j)\in f_i(\beta_j)\}$ is cofinal in $\cf(\lambda)$.
Find $\eta<\cf(\lambda)$ such that $i\in a_\eta$.
For every $j<\cf(\lambda)$ above $\eta$,  Equation~\eqref{eq1} entails that $f(\beta_j)\neq f_i(\beta_j)$.
This contradicts the fact that $J$ contains elements above $\eta$.
\end{proof}

For the sake of this paper, we introduce the following ad-hoc definition.

\begin{defn} For a map $d:\theta\times\kappa\rightarrow\lambda$, 
let $A(d)$ denote the set of all $\alpha<\kappa$ such that, for every cofinal subset $B\s\alpha$,
there exist $\eta\in\theta\cap\alpha$ and a cofinal $B'\s B$ on which $\beta\mapsto d(\eta,\beta)$ is injective.
\end{defn}

Note that $A(d)$ contains all successor ordinals, but we shall only care about the nonzero limit members of $A(d)$.

\begin{lemma}\label{lemma23} Suppose that $m(\lambda,\lambda,\theta,\chi)=\lambda$.
Then, for every map $d:\lambda^+\times\lambda^+\rightarrow\lambda$,
$A(d)\cap E^{\lambda^+}_\theta\in J_\chi[\lambda^+]$.
\end{lemma}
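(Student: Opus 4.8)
The plan is to directly construct a club $C\s\lambda^+$ and a sequence $\langle f_i:\lambda^+\rightarrow[\lambda^+]^{<\chi}\mid i<\lambda^+\rangle$ witnessing that $S:=A(d)\cap E^{\lambda^+}_\theta$ lies in $J_\chi[\lambda^+]$. The raw material is a family $\mathcal H=\{h_j\mid j<\lambda\}$ of functions from $\lambda$ to $[\lambda]^{<\chi}$ of size $\lambda$, furnished by the hypothesis $m(\lambda,\lambda,\theta,\chi)=\lambda$, together with the map $d$. I would glue the two as follows: fixing a bijection $e_\beta:\lambda\leftrightarrow\beta$ for each $\beta\in(\lambda,\lambda^+)$ and a pairing bijection $p:\lambda^+\times\lambda\leftrightarrow\lambda^+$, set, for $i=p(\eta,j)$ and $\beta\in(\lambda,\lambda^+)$,
$$f_i(\beta):=e_\beta[h_j(d(\eta,\beta))],$$
and $f_i(\beta):=\emptyset$ for $\beta\le\lambda$. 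Since $h_j(d(\eta,\beta))\in[\lambda]^{<\chi}$ and $e_\beta$ maps $\lambda$ into $\beta$, each $f_i(\beta)$ is a subset of $\beta$ of size $<\chi$, as required. For $C$ I would take the club of all $\alpha\in(\lambda,\lambda^+)$ closed under $p$, so that $\eta<\alpha$ and $j<\lambda$ guarantee $i=p(\eta,j)<\alpha$.

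To verify the defining property, fix $\alpha\in S\cap C$, a regressive $f:\alpha\rightarrow\alpha$, and a cofinal $B\s\alpha$; the goal is an $i<\alpha$ with $\sup\{\beta\in B\mid f(\beta)\in f_i(\beta)\}=\alpha$. Invoking $\alpha\in A(d)$ on $B$ yields $\eta<\alpha$ and a cofinal $B'\s B$ on which $\beta\mapsto d(\eta,\beta)$ is injective; by discarding the bounded initial segment of $B'$ lying below $\lambda$ and then thinning, I arrange that $\otp(B')=\theta$ and that every element of $B'$ exceeds $\lambda$ (so that each $e_\beta$ is available). Then $X:=\{d(\eta,\beta)\mid\beta\in B'\}$ is an element of $[\lambda]^\theta$, and I define $g:X\rightarrow\lambda$ by $g(d(\eta,\beta)):=e_\beta^{-1}(f(\beta))$, which is legitimate because $d(\eta,\cdot)$ is injective on $B'$ and $f(\beta)<\beta=\im(e_\beta)$ by regressivity. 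Applying the meeting property of $\mathcal H$ to $X$ and $g$, I obtain $j<\lambda$ for which $\{\nu\in X\mid g(\nu)\in h_j(\nu)\}$ has size $\theta$.

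Pulling this back along the injection $d(\eta,\cdot)$, the set $B^*:=\{\beta\in B'\mid e_\beta^{-1}(f(\beta))\in h_j(d(\eta,\beta))\}$ has size $\theta$, and for each $\beta\in B^*$ we get $f(\beta)=e_\beta(e_\beta^{-1}(f(\beta)))\in e_\beta[h_j(d(\eta,\beta))]=f_i(\beta)$, where $i:=p(\eta,j)$. Since $\alpha\in C$ is closed under $p$, we have $i<\alpha$. It remains only to see that $B^*$ is cofinal in $\alpha$ rather than merely of size $\theta$: a subset of $B'$ of size $\theta=\cf(\alpha)$ is cofinal in $B'$ because $\theta$ is regular and $\otp(B')=\theta$, hence cofinal in $\alpha$; as $B^*\s B'\s B$, this yields $\sup\{\beta\in B\mid f(\beta)\in f_i(\beta)\}=\alpha$, completing the verification.

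The main obstacle I anticipate is precisely this last gap: the meeting number only delivers a matching set of size $\theta$, whereas membership in $J_\chi[\lambda^+]$ demands a set whose supremum is $\alpha$. The fix is to commit in advance to a copy $B'$ of order type exactly $\theta$, so that ``size $\theta$'' and ``cofinal in $\alpha$'' coincide. A secondary bookkeeping point is the transfer of the regressive values $f(\beta)<\beta$, which live below $\lambda^+$, down to the cardinal $\lambda$ on which $\mathcal H$ operates; this is handled uniformly by the fixed bijections $e_\beta$, whose inverses encode $f(\beta)$ before feeding it to $\mathcal H$ and whose forward images decode the resulting meeting back into subsets of $\beta$ of size $<\chi$.
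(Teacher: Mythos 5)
Your proposal is correct and follows essentially the same route as the paper's proof: the paper likewise defines $f_i(\beta):=e_\beta[h_j(d(\eta,\beta))]$ with $(j,\eta)$ decoded from $i$ by a fixed pairing bijection, takes $C$ to be the club of closure points of that pairing, and resolves your ``size $\theta$ versus cofinal in $\alpha$'' concern exactly as you do, by arranging $\otp$ of the relevant set to be $\theta$ before invoking the meeting family. The only cosmetic difference is that the paper uses surjections $e_\beta:\lambda\rightarrow\beta+1$ (choosing $f^*$ with $e_\beta(f^*(\beta))=f(\beta)$) where you use bijections $e_\beta:\lambda\leftrightarrow\beta$ and their inverses, handling $\beta\le\lambda$ separately.
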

\begin{proof} 
For every $\beta<\lambda^+$, fix a surjection $e_\beta:\lambda\rightarrow\beta+1$.
Fix a bijection $\pi:\lambda^+\leftrightarrow\lambda\times\lambda^+$,
and consider the club $C:=\{ \alpha<\lambda^+\mid \pi[\alpha]=\lambda\times\alpha\}$.
Fix a sequence  $\langle h_j \mid j<\lambda\rangle$ of functions from $\lambda$ to $[\lambda]^{<\chi}$ such that, 
for every $X\in[\lambda]^\theta$ and every function $g:X\rightarrow\lambda$,
there exists $j<\lambda$ with $|\{\xi\in X\mid g(\xi)\in h_j(\xi)\}|=\theta$.
Now, given a map $d:\lambda^+\times\lambda^+\rightarrow\lambda$,
set $S:=A(d)\cap E^{\lambda^+}_\theta$.
Finally, for each $i<\lambda^+$, 
derive a function $f_i:\lambda^+\rightarrow[\lambda^+]^{<\chi}$ via 
$$f_i(\beta):=e_\beta[h_j(d(\eta,\beta))],$$
where $(j,\eta):=\pi(i)$.

\begin{claim} Let $\alpha\in S\cap C$. Suppose that $f:\alpha\rightarrow\alpha$
is a regressive function, and $B\s\alpha$ is cofinal.
Then there exists $i<\alpha$ such that $\sup\{ \beta\in B\mid f(\beta)\in f_i(\beta)\}=\alpha$.
\end{claim}
\begin{proof}
By passing to a cofinal subset, we may assume that $\otp(B)=\theta$. 
As $f$ is regressive, we may pick a function $f^*:\alpha\rightarrow\lambda$ such that $e_\beta(f^*(\beta))=f(\beta)$ for every $\beta<\alpha$.
Find $\eta<\alpha$ and a cofinal subset $B'\s B$ on which $\beta\mapsto d(\eta,\beta)$ is injective.
Set $X:=\{ d(\eta,\beta)\mid \beta\in B'\}$, so that $X\in[\lambda]^\theta$.
Define a function $g:X\rightarrow\lambda$ via $g(d(\eta,\beta)):=f^*(\beta)$.
Now, pick $j<\lambda$ such that $X^*:=\{\xi\in X\mid g(\xi)\in h_j(\xi)\}$ has size $\theta$.
Evidently $B^*:=\{ \beta\in B'\mid d(\eta,\beta)\in X^*\}$ has size $\theta$,
so that it is cofinal in $\alpha$. Finally, as $\eta\in\alpha\in C$, we may find $i<\alpha$ such that $\pi(i)=(j,\eta)$. 
Then, for every $\beta\in B^*$,
\[f(\beta)=e_\beta(f^*(\beta))=e_\beta(g(d(\eta,\beta)))\in e_\beta[h_j(d(\eta,\beta))]=f_i(\beta).\qedhere\]
\end{proof}
This completes the proof.
\end{proof}

\begin{prop}\label{sufficient}
\begin{enumerate}
\item There is a map $d:\cf(\lambda)\times\lambda^+\rightarrow\lambda$ such that $E^{\lambda^+}_{<\cf(\lambda)}\s A(d)$;
\item If $\ads_\lambda$ holds, then there is a map $d:\cf(\lambda)\times\lambda^+\rightarrow\lambda$ such that $E^{\lambda^+}_{\neq\cf(\lambda)}\s A(d)$;
\item If $\lambda$ is singular and $\vec f=\langle f_\beta\mid \beta<\lambda^+\rangle$ is a scale for $\lambda$,
then there is a map $d:\cf(\lambda)\times\lambda^+\rightarrow\lambda$ such that $G(\vec f)\s A(d)$;
\item If $(\aleph_{\omega+1}, \aleph_\omega) \twoheadrightarrow (\aleph_1, \aleph_0)$ holds,
then for every map $d:\aleph_0\times\aleph_{\omega+1}\rightarrow\aleph_\omega$,  $E^{\aleph_{\omega+1}}_{\aleph_1}\setminus A(d)$ is stationary.
\end{enumerate}
\end{prop}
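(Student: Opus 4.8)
The plan is to first unwind what membership in $A(d)$ means for $\alpha\in E^{\aleph_{\omega+1}}_{\aleph_1}$, and then to feed a tailored structure into the Chang-type hypothesis $(\aleph_{\omega+1},\aleph_\omega)\twoheadrightarrow(\aleph_1,\aleph_0)$. For $\beta<\aleph_{\omega+1}$, write $D_\beta:=\{d(n,\beta)\mid n<\omega\}\in[\aleph_\omega]^{\le\aleph_0}$, and for $W\s\aleph_\omega$ put $S_W:=\{\beta<\aleph_{\omega+1}\mid D_\beta\s W\}$. The first thing I would record is a pigeonhole observation: if $W$ is countable and $B\s\alpha$ is cofinal in $\alpha$ of order type $\omega_1$ with $B\s S_W$, then $\alpha\notin A(d)$. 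Indeed, for $n<\omega$ and any cofinal $B'\s B$ we have $|B'|=\aleph_1$ (as $\cf(\alpha)=\aleph_1$) while $\{d(n,\beta)\mid\beta\in B'\}\s W$ is countable, so $\beta\mapsto d(n,\beta)$ cannot be injective on $B'$; since here $\theta\cap\alpha=\omega$, this very $B$ witnesses $\alpha\notin A(d)$.

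With this in hand, to prove stationarity I would show that $E^{\aleph_{\omega+1}}_{\aleph_1}\setminus A(d)$ meets an arbitrary club $C\s\aleph_{\omega+1}$. Let $e_C$ denote the increasing enumeration of $C$, so $e_C$ is injective with range $C$, and consider the structure $\mathfrak A:=(\aleph_{\omega+1};<,\aleph_\omega,\hat d,e_C,(n)_{n<\omega})$, whose distinguished unary predicate is $\aleph_\omega$ and where $\hat d$ is the total binary function agreeing with $d$ on $\omega\times\aleph_{\omega+1}$. By the hypothesis, fix $M\prec\mathfrak A$ with $|M|=\aleph_1$ and $W:=M\cap\aleph_\omega$ countable. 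Since every natural number is a constant of $\mathfrak A$, we have $\omega\s M$, whence for all $\beta\in M$ and $n<\omega$ the value $\hat d(n,\beta)=d(n,\beta)$ lies in $M\cap\aleph_\omega=W$; thus $M\s S_W$. Closing under $e_C$ gives $T:=e_C[M]\s M\cap C$, and as $e_C$ is injective, $|T|=|M|=\aleph_1$; in particular $T\s S_W\cap C$.

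Finally, I would let $\langle t_\xi\mid\xi<\omega_1\rangle$ enumerate the first $\omega_1$ members of $T$ in increasing order and set $\alpha:=\sup_{\xi<\omega_1}t_\xi$. Then $\cf(\alpha)=\aleph_1$, and $\alpha<\aleph_{\omega+1}$ because $\aleph_{\omega+1}$ is regular; moreover $\{t_\xi\mid\xi<\omega_1\}\s C$ is cofinal in $\alpha$ and $C$ is closed, so $\alpha\in C$. As $B:=\{t_\xi\mid\xi<\omega_1\}\s S_W$ is a cofinal subset of $\alpha$ of order type $\omega_1$, the pigeonhole observation yields $\alpha\notin A(d)$, so $\alpha\in(E^{\aleph_{\omega+1}}_{\aleph_1}\setminus A(d))\cap C$, as required.

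The step I expect to be the crux is the passage through Chang's conjecture: applying it naively would hand back the ordinal $\sup(M\cap\aleph_{\omega+1})$, whose cofinality can be merely $\omega$ and which need not lie in $C$, so neither of the two demands on $\alpha$ is automatic. Folding the enumeration $e_C$ into $\mathfrak A$ is precisely what repairs both defects simultaneously: it relativizes the argument to $C$ and produces a set of size $\aleph_1$ contained in $C$ on which every $D_\beta$ stays inside the single countable set $W$, from which an ordinal of cofinality exactly $\aleph_1$ sitting in $C$ is extracted.
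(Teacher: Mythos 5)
There is a genuine gap: the proposition has four clauses, and your proposal proves only clause (4). Clauses (1)--(3) are of a different character --- they require \emph{constructing} maps $d$ with $E^{\lambda^+}_{<\cf(\lambda)}\s A(d)$, $E^{\lambda^+}_{\neq\cf(\lambda)}\s A(d)$ (from $\ads_\lambda$), and $G(\vec f)\s A(d)$ (from a scale), whereas clause (4) defeats \emph{every} map $d$ --- so nothing in your Chang-conjecture argument can be recycled for them, and they are simply missing. For the record, the paper handles them as follows. For (2), fix an $\ads_\lambda$-sequence $\langle A_\beta\mid\beta<\lambda^+\rangle$ and a strictly increasing sequence $\langle\lambda_\eta\mid\eta<\cf(\lambda)\rangle$ converging to $\lambda$, and set $d(\eta,\beta):=\min(A_\beta\setminus\lambda_\eta)$; given a cofinal $B\s\alpha$ with $\cf(\alpha)\neq\cf(\lambda)$, the disjointifying function $f_\alpha$ is bounded by some $\lambda_\eta$ on a cofinal $B'\s B$, and then $\beta\mapsto d(\eta,\beta)$ is injective on $B'$ since $d(\eta,\beta)\in A_\beta\setminus f_\alpha(\beta)$ and these sets are pairwise disjoint. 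For (3), set $d(\eta,\beta):=f_\beta(\eta)$; for $\alpha\in G(\vec f)$ one interleaves a witnessing set $A$ with the given $B$ and uses $\cf(\alpha)\neq\cf(\lambda)$ to stabilize the indices at a single $\eta$, obtaining a cofinal $B'\s B$ on which $\beta\mapsto f_\beta(\eta)$ is strictly increasing. Clause (1) then reduces to (2) when $\lambda$ is regular (where $\ads_\lambda$ holds trivially) and to (3) when $\lambda$ is singular (as $E^{\lambda^+}_{<\cf(\lambda)}\s G(\vec f)$ and scales exist by Shelah's theorem).

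Your clause (4) argument itself is correct, and it is a legitimate variant of the paper's. The paper fixes $M\prec(\mathcal H_{\aleph_{\omega+2}},{\in})$ with $C,d\in M$, $M\cap\aleph_\omega$ countable, and --- crucially --- $\sup(M\cap\aleph_{\omega+1})$ of cofinality $\aleph_1$, then takes $B:=M\cap\aleph_{\omega+1}$; this uses a (known but nontrivial) strengthening of $(\aleph_{\omega+1},\aleph_\omega)\twoheadrightarrow(\aleph_1,\aleph_0)$ in which the sup of the model can be arranged to have uncountable cofinality. You avoid that strengthening: by folding the increasing enumeration $e_C$ into a structure on $\aleph_{\omega+1}$ and truncating $e_C[M]$ at its first $\omega_1$ elements, you manufacture by hand an $\alpha\in C$ of cofinality exactly $\aleph_1$ below which sits a set $B\s S_W$ of order type $\omega_1$, and then the same pigeonhole as in the paper (uncountably many ordinals cannot map injectively into the countable set $W$) finishes the job. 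That portion of your write-up would stand on its own; the proposal as a whole, however, does not prove the proposition.
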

\begin{proof}  (1) For $\lambda$ regular, this follows from Clause~(2), and for $\lambda$ singular,
this follows from Clause~(3). 

(2) Recall that $\ads_\lambda$ is the principle from \cite[p.440]{MR675955} asserting the existence of a sequence $\vec A=\langle A_\beta\mid \beta<\lambda^+\rangle$ of cofinal subsets of $\lambda$,
with the property that, for every $\alpha<\lambda^+$, there exists a function $f_\alpha:\alpha\rightarrow\lambda$ such that $\langle A_\beta\setminus f_\alpha(\beta)\mid \beta<\alpha\rangle$
consists of pairwise disjoint sets. It is easy to see that if $\lambda$ is regular, then $\ads_\lambda$ holds.
By \cite[Theorem~4.1]{MR1838355}, if there exists a special $\lambda^+$-Aronszajn tree, then $\ads_\lambda$ holds.

Fix a strictly increasing sequence of ordinals $\langle \lambda_\eta\mid \eta<\cf(\lambda)\rangle$
converging to $\lambda$ (if $\lambda$ is regular, then we may just let $\lambda_\eta:=\eta$).
Define $d:\cf(\lambda)\times\lambda^+\rightarrow\lambda$ via:
$$d(\eta,\beta):=\min(A_\beta\setminus\lambda_\eta).$$

Now, given a subset $B\s\lambda$ for which $\alpha:=\sup(B)$ in $E^{\lambda^+}_{\neq\cf(\lambda)}$,
it is easy to find $\eta<\cf(\lambda)$ and a cofinal $B'\s B$ such that $f_{\alpha}(\beta)\le\lambda_\eta$ for all $\beta\in B'$ (see \cite[Fact~1.3]{paper7}).
In effect, $\beta\mapsto d(\eta,\beta)$ is injective over $B'$.

(3) Recall that $G(\vec f)$ stands for the set of all $\alpha\in E^{\lambda^+}_{\neq\cf(\lambda)}$ 
for which there exist a cofinal subset $A\s\alpha$ and $i<\cf(\lambda)$ such that, for every pair of ordinals $\gamma<\delta$ from $A$ and every $j\in[i,\cf(\lambda))$, $f_\gamma(j)<f_{\delta}(j)$.
Evidently, $E^{\lambda^+}_{<\cf(\lambda)}\s G(\vec f)$.

Define $d:\cf(\lambda)\times\lambda^+\rightarrow\lambda$ via $d(\eta,\beta):=f_\beta(\eta)$.
To see this works, let $\alpha\in G(\vec f)$. To avoid trivialities, suppose that $\theta:=\cf(\alpha)$ is infinite.
Let $B$ be a cofinal subset of $\alpha$.
Fix $A\s\alpha$ and $i<\cf(\lambda)$ witnessing that $\alpha\in G(\vec f)$.
Find a sequence $\langle (\alpha_\tau,\beta_\tau)\mid \tau<\theta\rangle\in\prod_{\tau<\theta}A\times B$
such that $\sup_{\tau<\theta}\alpha_\tau=\alpha$ and $\alpha_\tau<\beta_\tau<\alpha_{\tau+1}$ for every $\tau<\theta$.
Next, for each $\tau<\theta$, fix a large enough $i_\tau<\cf(\lambda)$ such that:
\begin{itemize}
\item $i\le i_\tau$;
\item for every $j\in[i_\tau,\cf(\lambda))$, $f_{\alpha_\tau}(j)<f_{\beta_\tau}(j)<f_{\alpha_{\tau+1}}(j)$.
\end{itemize}
As in the previous case, we exploit the fact that $\theta\neq\cf(\lambda)$  to find some $\eta<\cf(\lambda)$ such that $i_{\tau}\le \eta$ for cofinally many $\tau<\theta$.
Let $B':=\{ \beta_\tau\mid \tau<\theta, i_{\tau}\le \eta\}$.
To see that $\beta\mapsto d(\eta,\beta)$ is injective over $B'$,
fix arbitrary $\tau<\tau'<\theta$ such that $\beta_\tau$ and $\beta_{\tau'}$ are in $B'$.
As $\eta\ge\max\{i,i_\tau,i_{\tau'}\}$, we infer that 
$$d(\eta,\beta_\tau)=f_{\beta_\tau}(\eta)<f_{\alpha_{\tau+1}}(\eta)\le f_{\alpha_{\tau'}}(\eta)<f_{\beta_{\tau'}}(\eta)=d(\eta,\beta_{\tau'}).$$

(4) 
Let $d:\aleph_0\times\aleph_{\omega+1}\rightarrow\aleph_\omega$ be arbitrary. 
Also, let $C$ be an arbitrary club in $\aleph_{\omega+1}$. 
Assuming that $(\aleph_{\omega+1}, \aleph_\omega) \twoheadrightarrow (\aleph_1, \aleph_0)$ holds,
we may now fix an elementary submodel $M\prec (\mathcal H_{\aleph_{\omega+2}},{\in})$
with $C,d\in M$
such that $\alpha:=\sup(M\cap\aleph_{\omega+1})$ has cofinality $\aleph_1$ and $M\cap\aleph_\omega$ is countable.
Clearly, $\alpha\in C$, $B:=M\cap\aleph_{\omega+1}$ is cofinal in $\alpha$ and, for all $\eta<\aleph_0$ and $\beta\in B$,
$d(\eta,\beta)$ is in the countable set $M\cap\aleph_\omega$.
So, $\alpha\in E^{\aleph_{\omega+1}}_{\aleph_1}\cap C\setminus A(d)$.
\end{proof}

The next theorem yields Theorem~C.

\begin{theorem}\label{thmc} Suppose that $\chi\le\theta<\lambda$ and $\theta\neq\cf(\lambda)$. Then the following are equivalent:
\begin{enumerate}
\item $\{ S\in I[\lambda^+]\mid S\s  E^{\lambda^+}_\theta\}\s J_\chi[\lambda^+]$;
\item $J_\chi[\lambda^+]$ contains a stationary subset of $E^{\lambda^+}_\theta$;
\item $m(\theta,\lambda,\theta,\chi)=\lambda$;
\item $m(\lambda,\lambda,\theta,\chi)=\lambda$.
\end{enumerate}
\end{theorem}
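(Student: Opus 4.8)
The plan is to establish the cycle of implications $(1)\Rightarrow(2)\Rightarrow(3)\Rightarrow(4)\Rightarrow(1)$. The first arrow is soft: by Shelah's theorem that for every regular $\theta<\lambda$ the approachability ideal contains a stationary subset of $E^{\lambda^+}_\theta$ (see \cite{eisworth}), clause (1) immediately places such a set into $J_\chi[\lambda^+]$, yielding (2).

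The implication $(2)\Rightarrow(3)$ is proved by localization. Fix a club $C$ and a sequence $\langle f_i\mid i<\lambda^+\rangle$ witnessing that some stationary $S\s E^{\lambda^+}_\theta$ belongs to $J_\chi[\lambda^+]$, and fix a single $\alpha\in S\cap C$ with $\alpha>\lambda$. Choose an increasing sequence $\langle\gamma_\tau\mid\tau<\theta\rangle$ cofinal in $\alpha$ with $\gamma_0>\lambda$, and for each $i<\alpha$ put $h_i(\tau):=f_i(\gamma_\tau)\cap\lambda$, so that $\{h_i\mid i<\alpha\}$ is a family of at most $\lambda$ functions from $\theta$ to $[\lambda]^{<\chi}$. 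Given any $X\in[\theta]^\theta$ and $g\colon X\to\lambda$, the set $B:=\{\gamma_\tau\mid\tau\in X\}$ is cofinal in $\alpha$, and the map sending $\gamma_\tau$ to $g(\tau)$ (and everything else to $0$) is regressive on $\alpha$ since $g(\tau)<\lambda<\gamma_\tau$; applying the defining property of $J_\chi[\lambda^+]$ at $\alpha$ produces $i<\alpha$ with $\{\tau\in X\mid g(\tau)\in h_i(\tau)\}$ of size $\theta$. Thus $m(\theta,\lambda,\theta,\chi)\le\lambda$, and together with $\lambda\le m(\theta,\lambda)\le m(\theta,\lambda,\theta,\chi)$ from Lemma~\ref{lemma23b}(1),(2) we obtain (3).

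The equivalence $(3)\Leftrightarrow(4)$ is cardinal arithmetic. I would first record the two easy monotonicities $m(\theta,\theta,\theta,\chi)\le m(\theta,\lambda,\theta,\chi)\le m(\lambda,\lambda,\theta,\chi)$, the first by intersecting each guessing value with $\theta$ and the second by restricting domains from $\lambda$ to $\theta$. Then $(4)\Rightarrow(3)$ is immediate, since $\lambda\le m(\theta,\lambda)\le m(\theta,\lambda,\theta,\chi)\le m(\lambda,\lambda,\theta,\chi)=\lambda$; and $(3)\Rightarrow(4)$ follows by feeding the bounds $m(\theta,\theta,\theta,\chi)\le\lambda$ and $m(\theta,\lambda)\le\lambda$ (both consequences of (3)) into Lemma~\ref{lemma23b}(3) to get $m(\lambda,\lambda,\theta,\chi)\le\max\{m(\theta,\theta,\theta,\chi),m(\theta,\lambda)\}\le\lambda$.

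The last arrow $(4)\Rightarrow(1)$ is where the real work lies. Given $S\in I[\lambda^+]$ with $S\s E^{\lambda^+}_\theta$, since $J_\chi[\lambda^+]$ is a normal ideal it suffices, by Lemma~\ref{lemma23}, to exhibit a map $d$ with $S\s A(d)$ modulo a club. When $\theta<\cf(\lambda)$ (in particular whenever $\lambda$ is regular) this is free: Proposition~\ref{sufficient}(1) gives $d$ with $E^{\lambda^+}_\theta\s E^{\lambda^+}_{<\cf(\lambda)}\s A(d)$, so no use of $S\in I[\lambda^+]$ is needed. The genuine case is $\cf(\lambda)<\theta<\lambda$, with $\lambda$ singular; here I would fix a scale $\vec f$ and use the map $d(\eta,\beta):=f_\beta(\eta)$ of Proposition~\ref{sufficient}(3), for which $G(\vec f)\s A(d)$, and then try to show $S\s G(\vec f)$ modulo a club, that is, that the approachable points of cofinality $\theta$ are good points of $\vec f$. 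I expect this containment of $I[\lambda^+]\restriction E^{\lambda^+}_{\neq\cf(\lambda)}$ in the good points of an arbitrary scale to be the main obstacle, drawing on the PCF-theoretic structure of the approachability ideal rather than a hands-on diagonalization; that the hypothesis $S\in I[\lambda^+]$ cannot be dropped is exactly the point of Proposition~\ref{sufficient}(4), which shows $E^{\lambda^+}_\theta\s A(d)$ can fail in ZFC, while the restriction $\theta\neq\cf(\lambda)$ is forced by Proposition~\ref{prop35}.
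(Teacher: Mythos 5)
Your proposal is correct and takes essentially the same route as the paper's proof: the same cycle of implications, the same localization of a $J_\chi[\lambda^+]$-witness at a single $\alpha\in S\cap C$ above $\lambda$ to obtain clause (3), the same meeting-number monotonicities from Lemma~\ref{lemma23b} for $(3)\Leftrightarrow(4)$, and the same split into the case $\theta<\cf(\lambda)$ (via Proposition~\ref{sufficient}(1)) versus $\lambda$ singular with a scale (via Proposition~\ref{sufficient}(3) and Lemma~\ref{lemma23}) for the final arrow. The step you flag as the main obstacle --- that approachable points of cofinality $\neq\cf(\lambda)$ are good points of an arbitrary scale modulo a club --- is indeed the crux, but it is a known theorem that the paper simply cites (\cite[Corollary~2.15]{MR2078366}), so your outline closes into a complete proof identical in structure to the paper's.
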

\begin{proof} 
$(1)\implies(2)$: By \cite[Lemma~4.4]{Sh:351} and \cite[\S1]{MR1261217}, $I[\lambda^+]$ contains some stationary subset of $E^{\lambda^+}_\theta$.

$(2)\implies(3)$: By the hypothesis, in particular, we may fix $\alpha\in E^{\lambda^+}_\theta\setminus(\lambda+1)$ and a sequence of functions 
 $\langle f_i:\alpha\rightarrow[\lambda]^{<\chi}\mid i<\alpha\rangle$ satisfying the following.
For every regressive function $f:\alpha\rightarrow\lambda$,
and every cofinal subset $B\s\alpha$, there exists some $i<\alpha$ such that $\sup\{ \beta\in B\mid f(\beta)\in f_i(\beta)\}=\alpha$.
Fix a strictly increasing function $\pi:\theta\rightarrow\alpha\setminus\lambda$ whose image is cofinal in $\alpha$.
We claim that $\mathcal H:=\{ f_i\circ \pi\mid i<\alpha\}$ is as sought.
To see this, let $g:X\rightarrow\lambda$ be an arbitrary function with $X\in[\theta]^\theta$.
Evidently, $B:=\pi[X]$ is cofinal subset of $\alpha$.
Now, fix any regressive function $f:\alpha\rightarrow\lambda$ extending $g\circ \pi^{-1}$.
Find $i<\alpha$ such that $B^*:=\{ \beta\in B\mid f(\beta)\in f_i(\beta)\}$ is cofinal in $\alpha$.
In particular, $X^*:=\{ \xi\in X\mid \pi(\xi)\in B^*\}$ has size $\theta$. Set $h:=f_i\circ \pi$, so that $h\in\mathcal H$.
Then, for every $\xi\in X^*$, 
$$g(\xi)=g(\pi^{-1}(\pi(\xi)))=f(\pi(\xi))\in f_i(\pi(\xi))=h(\xi).$$

$(3)\implies(4)$: By Lemma~\ref{lemma23b}(2), $m(\theta,\lambda)\le\lambda$.
Then by Lemma~\ref{lemma23b}(3), $m(\lambda,\lambda,\theta,\chi)\le\lambda$.
Altogether, $m(\lambda,\lambda,\theta,\chi)=\lambda$.

$(4)\implies(1)$: Fix arbitrary $S\in I[\lambda^+]$ with $S\s  E^{\lambda^+}_\theta$.
If $\lambda$ is regular, then by Proposition~\ref{sufficient}(1),
we may fix a map $d:\lambda\times\lambda^+\rightarrow\lambda$ such that $S\s A(d)$.
If $\lambda$ is singular, then fix a scale 
$\vec f=\langle f_\beta\mid \beta<\lambda^+\rangle$ for $\lambda$;
by \cite[Corollary~2.15]{MR2078366}, since $S\in I[\lambda^+]$,
there exists a club $C\s\lambda^+$ such that $C\cap S\s G(\vec f)$.
Then by Proposition~\ref{sufficient}(3), $C\cap S\s A(d)$.
So, in both cases, it follows from Lemma~\ref{lemma23} that $S\in J_\chi[\lambda^+]$.
\end{proof}

\section{The connection to Souslin trees}\label{seconnection}
Throughout this short section, $\kappa$ denotes a regular uncountable cardinal.
A \emph{$C$-sequence over $\kappa$} is a sequence $\vec C=\langle C_\alpha\mid\alpha<\kappa\rangle$
such that, for every $\alpha<\kappa$, $C_\alpha$ is a closed subset of $\alpha$ with $\sup(C_\alpha)=\sup(\alpha)$.
The sequence $\vec C$ is \emph{coherent} iff for every $\alpha<\kappa$ and every $\bar\alpha\in\acc(C_\alpha)$,
$C_{\bar\alpha}=C_\alpha\cap\bar\alpha$.
For a stationary subset $S\s\kappa$, the principle $\boxtimes^-(S)$ from \cite[Definition~1.3]{paper22} asserts the existence of a coherent $C$-sequence $\vec C=\langle C_\alpha\mid\alpha<\kappa\rangle$
such that, for every cofinal $A\s\kappa$, there exists a nonzero $\alpha\in S$ (equivalently, stationarily many $\alpha\in S$) with $\sup(\nacc(C_\alpha)\cap A)=\alpha$.

The following is an easy improvement of \cite[Theorem~4.3]{paper24}.

\begin{theorem}\label{m} Suppose that $\square(\kappa)$ holds and $\kappa^{<\kappa}=\kappa$.
For every stationary $S\in J_\omega[\kappa]$,
$\boxtimes^-(S)$ holds.
\end{theorem}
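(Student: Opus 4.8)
The plan is to combine the two ingredients that $\square(\kappa)$ supplies—a coherent $C$-sequence witnessing $\square(\kappa)$—with the ideal-membership data coded by $S \in J_\omega[\kappa]$, and to \emph{feed the regressive-function-hitting property of $J_\omega[\kappa]$ through a $\diamondsuit$-style enumeration} (available since $\kappa^{<\kappa}=\kappa$) in order to upgrade the bare coherent $C$-sequence into one that witnesses $\boxtimes^-(S)$. The target $\boxtimes^-(S)$ asks for a coherent $\vec C = \langle C_\alpha \mid \alpha<\kappa\rangle$ such that every cofinal $A\s\kappa$ is ``caught'' at some nonzero $\alpha\in S$ in the strong sense that $\sup(\nacc(C_\alpha)\cap A)=\alpha$. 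The reference point is \cite[Theorem~4.3]{paper24}, which this theorem advertises as an ``easy improvement'': so I would first locate exactly which hypothesis of that theorem is being weakened—presumably the earlier result assumed $S\in J[\kappa]=J_2[\kappa]$, and the present statement relaxes this to $S\in J_\omega[\kappa]$, the larger ideal in which the witnessing functions $f_i$ take values in $[\kappa]^{<\omega}$ rather than singletons.

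First I would fix a coherent $C$-sequence $\vec D=\langle D_\alpha\mid\alpha<\kappa\rangle$ witnessing $\square(\kappa)$, and fix, via the definition of $J_\omega[\kappa]$, a club $C^*\s\kappa$ together with the sequence $\langle f_i:\kappa\rightarrow[\kappa]^{<\omega}\mid i<\kappa\rangle$ guaranteed by $S\in J_\omega[\kappa]$: for every $\alpha\in S\cap C^*$, every regressive $f:\alpha\to\alpha$, and every cofinal $B\s\alpha$, some $i<\alpha$ has $\sup\{\beta\in B\mid f(\beta)\in f_i(\beta)\}=\alpha$. Since $\kappa^{<\kappa}=\kappa$, I would then set up a bookkeeping device—a sequence of ``local guesses'' running along $\alpha<\kappa$—that at each $\alpha$ enumerates candidate cofinal sets $A\cap\alpha$ (coded as bounded subsets of $\alpha$) so that, using the hitting property above, one can thin each $D_\alpha$ inside its non-accumulation points to a coherent modification $C_\alpha$ that begins to reflect ``$A$ is cofinal.'' The key algebraic move is the translation between \emph{a cofinal set $A$ to be caught} and \emph{a regressive function/cofinal $B$ to which the $f_i$ apply}: typically one lets $f$ encode, at each $\beta$, the least relevant element of $A$ above some threshold, so that $f(\beta)\in f_i(\beta)$ forces a genuine element of $A$ into the neighborhood of $\beta$, which one then installs into $\nacc(C_\alpha)$.

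The coherence constraint is the delicate part, so the main obstacle I anticipate is \emph{maintaining coherence while performing the thinning}: the modifications producing $\nacc(C_\alpha)\cap A$ unbounded must be done so that for every $\bar\alpha\in\acc(C_\alpha)$ one still has $C_{\bar\alpha}=C_\alpha\cap\bar\alpha$. The standard remedy is to only ever \emph{add or relocate points within $\nacc$}, leaving $\acc(C_\alpha)=\acc(D_\alpha)$ untouched, and to drive all the guessing by a single normal $\diamondsuit$-sequence so that the choices made at $\bar\alpha$ and at $\alpha$ are forced to agree on the overlap $\alpha\cap\bar\alpha$. Because the $f_i$ now output finite sets rather than singletons, the $J_\omega$ version lets a single application of the hitting property capture finitely many candidate points at once; this is precisely what makes the improvement ``easy,'' since the finite slack removes the need for the more rigid singleton-matching used for $J_2$.

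Finally I would verify the catching clause: given an arbitrary cofinal $A\s\kappa$, apply elementarity/the $\diamondsuit$-guessing to find stationarily many $\alpha\in S\cap C^*$ at which the local guess correctly reflected $A\cap\alpha$, invoke the $J_\omega[\kappa]$ hitting property at such an $\alpha$ for the regressive function coding $A$, and conclude $\sup(\nacc(C_\alpha)\cap A)=\alpha$. I would close by noting the equivalence ``there exists a nonzero such $\alpha\in S$'' with ``stationarily many such $\alpha\in S$,'' which follows from the normality of $J_\omega[\kappa]$ together with a routine pressing-down argument, exactly as in \cite[Theorem~4.3]{paper24}.
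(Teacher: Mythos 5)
Your outline assembles the right raw materials (a coherent sequence from $\square(\kappa)$, the $J_\omega[\kappa]$ witnesses, $\diamondsuit$-style guessing from $\kappa^{<\kappa}=\kappa$, and the sound principle of modifying only within $\nacc$ to protect coherence), but it is missing the two mechanisms that actually carry the paper's proof, and the places where they are missing are exactly where your sketch breaks down. First, you start from a bare $\square(\kappa)$-sequence. Such a sequence is coherent and nontrivial but has no guessing feature at all: at a point $\alpha\in S$ there is no reason for the gaps $(\sup(C_\alpha\cap\beta),\beta)$, $\beta\in\nacc(C_\alpha)$, to meet any prescribed club or cofinal set, so the regressive function you intend to feed to the $J_\omega[\kappa]$-witnesses (``$f(\beta):=$ the least relevant element of $A$ above a threshold'') may be undefinable on a cofinal set of $\beta$'s. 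The paper first upgrades the square sequence, via \cite[Proposition~3.5]{paper24} (using $\square(\kappa)$ together with the stationarity of $S$), to a coherent sequence $\langle C_\alpha\mid\alpha<\kappa\rangle$ such that for every \emph{club} $D\s\kappa$ there is $\alpha\in S$ with $\sup(\nacc(C_\alpha)\cap D)=\alpha$. This club-catching property is what guarantees that the relevant gaps are nonempty, and it is a genuine preliminary step your plan skips.

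Second, and more seriously, your verification paragraph conflates construction time with verification time: you write that $f(\beta)\in f_i(\beta)$ ``forces a genuine element of $A$ into the neighborhood of $\beta$, which one then installs into $\nacc(C_\alpha)$.'' Nothing can be installed after $A$ is given; the sequence must be built in advance, yet the points placed in the gap at $\beta$ must end up belonging to $A$, and neither $A$ nor the hitting index $i$ is known at construction time. The paper breaks this circularity with a two-layer device that your sketch lacks: using $\diamondsuit(\kappa)$ it fixes a matrix $\langle A^i_\gamma\mid i,\gamma<\kappa\rangle$ that simultaneously guesses an arbitrary $\kappa$-sequence $\vec A=\langle A^i\mid i<\kappa\rangle$ of cofinal sets on the stationary set $G(\vec A)$; it builds for \emph{each} $i<\kappa$ a candidate sequence $\langle C^i_\alpha\mid\alpha<\kappa\rangle$ by pre-installing, in the gap at $\beta$, one point of each guessed set $A^i_\gamma$ for $\gamma\in f_i(\beta)$ (finitely many per gap, which is where $J_\omega$ enters); and it argues by contradiction that a \emph{single} $i$ works for all $A$: if each $i$ had a counterexample $A^i$, one guesses the whole sequence $\vec A$ at once, and the regressive function encodes not elements of any $A^i$ but elements of $G(\vec A)$ --- levels $\gamma$ at which the guesses are correct --- so that $f(\beta)\in f_i(\beta)$ makes the pre-installed point $\min(A^i_{f(\beta)}\setminus(\sup(C_\alpha\cap\beta)+1))$ an actual element of $A^i$ in the gap, contradicting the choice of $A^i$. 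Your single-sequence-plus-single-diamond plan yields at best ``for every $A$ there is a sequence, depending on $A$, that catches it,'' which is not $\boxtimes^-(S)$. Relatedly, your coherence fix (``a single normal $\diamondsuit$-sequence forces the choices at $\bar\alpha$ and $\alpha$ to agree'') is not what ensures coherence --- diamond guesses at distinct levels are in general unrelated; coherence in the paper comes from locality: the points installed in the gap at $\beta$ depend only on $\beta$, $i$, $f_i(\beta)$ and the matrix, hence agree across all $\alpha$ with $\beta\in\nacc(C_\alpha)$, by coherence of the base sequence.
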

\begin{proof} Suppose $S\in J_\omega[\kappa]$ is stationary.
For simplicity, we shall assume that $S=S\cap\acc(\kappa)$.
By Propositions \ref{inaccessible} and \ref{prop35},
$\kappa$ is the successor of an uncountable cardinal.
So, by the main result of \cite{MR2596054},
since $\kappa^{<\kappa}=\kappa$, $\diamondsuit(\kappa)$ holds.
Using \cite[Proposition~3.2]{paper24},
we fix a matrix $\langle A^i_\gamma\mid i<\kappa,\gamma<\kappa\rangle$ 
such that for every sequence $\vec A=\langle A^i\mid i<\kappa\rangle$ of cofinal subsets of $\kappa$,
the following set is stationary: $$G(\vec{A}):=\{\gamma<\kappa\mid \forall i<\gamma(\sup(A^i\cap\gamma)=\gamma\ \&\ A^i\cap\gamma=A^i_\gamma)\}.$$

Next, using \cite[Proposition~3.5]{paper24}, we fix a coherent $C$-sequence $\langle C_\alpha\mid\alpha<\kappa\rangle$ such that, for every club $D\s\kappa$,
there exists $\alpha\in S$ with $\sup(\nacc(C_\alpha)\cap D)=\alpha$.

Let us also fix a club $C\s\kappa$ and a sequence  of functions  $\langle f_i:\kappa\rightarrow[\kappa]^{<\omega}\mid i<\kappa\rangle$ 
witnessing together that $S\in J_\omega[\kappa]$.
Next, for all $i,\alpha<\kappa$, let 
$$C_\alpha^i:=C_\alpha\cup\{\min(X\setminus(\sup(C_\alpha\cap\beta)+1))\mid\beta\in\nacc(C_\alpha), \beta>0, X\in\mathcal X_\beta^i\},$$
 where
$\mathcal X^i_\beta:=\{ \{\beta\}\cup A^i_{\gamma}\mid \gamma\in f_i(\beta)\}$.

A proof similar to that of \cite[Claim~4.3.1]{paper24} (see also \cite[Lemma~4.9]{paper23})
establishes that for all $i<\kappa$.
$\langle C^i_\alpha\mid \alpha<\kappa\rangle$ is a coherent $C$-sequence.

\begin{claim} There exists $i<\kappa$ such that, for every cofinal $A\s\kappa$,
there exists $\alpha\in S$ with $\sup(\nacc(C^i_\alpha)\cap A)=\alpha$.
\end{claim}
\begin{proof} Suppose not. Fix a sequence $\vec A=\langle A^i\mid i<\kappa\rangle$ of cofinal subsets of $\kappa$
such that for all $i<\kappa$ and $\alpha\in S$,
$\sup(\nacc(C^i_\alpha)\cap A^i)<\alpha$. Set $G:=G(\vec{A})$, so that $G$ is a stationary subset of $\kappa$
and $D:=\{\beta<\kappa\mid \sup(G\cap\beta)=\beta>0\}$ is a club in $\kappa$.
Pick $\alpha\in S$ such that $\sup(\nacc(C_\alpha)\cap C\cap D)=\alpha$. In particular, $\alpha\in C$.
Put $B:=\nacc(C_\alpha)\cap D$. For all $\beta\in B$, since $\beta\in D$, we know that the relative interval $G\cap(\sup(C_\alpha\cap\beta),\beta)$ is nonempty.
Consequently, we may find some regressive function $f:\alpha\rightarrow\alpha$ such that $f(\beta)\in G\cap(\sup(C_\alpha\cap\beta),\beta)$ for all $\beta\in B$.
Pick $i<\alpha$ and a cofinal subset $B'\s B$ such that $f(\beta)\in f_i(\beta)$ for all $\beta\in B'$.
Fix a large enough $\eta\in C_\alpha$ such that $\sup(C_\alpha\cap\eta)\ge i$.
By omitting an initial segment, we may assume that  $B'\cap\eta=\emptyset$.

Let $\beta\in B'$ be arbitrary. Write $\gamma:=f(\beta)$. Then $\gamma\in G\cap(\sup(C_\alpha\cap\beta),\beta)\s(i,\beta)$.
In particular, $\sup(A^i\cap\gamma)=\gamma$ and $A^i\cap\gamma=A^i_\gamma$, so that $X:=\{\beta\}\cup(A^i\cap\gamma)$
is in $\mathcal X^i_\beta$,
and $\min(X\setminus(\sup(C_\alpha\cap\beta)+1))\in A^i\cap\gamma$.
It follows that for all $\beta\in B'$, $C^i_\alpha\cap A^i\cap(\sup(C_\alpha\cap\beta),\beta)$ is a finite nonempty set,
contradicting the fact that $\sup(\nacc(C^i_\alpha)\cap A^i)<\alpha=\sup(B')$.
\end{proof}
Let $i<\kappa$ be given by the claim.
Then $\langle C_\alpha^i\mid \alpha<\kappa\rangle$ is a $\boxtimes^-(S)$-sequence.
\end{proof}

Following \cite[\S2]{paper48}, a $\lambda^+$-Souslin tree is said to be \emph{maximally-complete} iff it is $\chi$-complete
for the least cardinal $\chi$ to satisfy $\lambda^\chi>\lambda$.
Another concept is that of a \emph{club-regressive} tree from \cite[p.~1961]{paper22};
such a tree does not have a copy of the Cantor tree,
and hence it is nowhere $\aleph_1$-complete.
For an application of the two extremes, see \cite[Corollary~2.23]{paper48}.

\begin{lemma}\label{lemma42} Suppose that $\lambda$ is an uncountable cardinal and $2^\lambda=\lambda^+$.
\begin{enumerate}
\item If $\boxtimes^-(\lambda^+)$ holds, then there exist a club-regressive $\lambda^+$-Souslin tree
and a maximally-complete  $\lambda^+$-Souslin tree;
\item If $T\s\lambda^+$ is a stationary set all of whose stationary subsets reflect,
then $\boxtimes^-(T)$ entails the existence of a uniformly coherent $\lambda^+$-Souslin tree.
\end{enumerate}
\end{lemma}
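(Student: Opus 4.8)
The plan is to recognize the hypotheses as the standard input to the microscopic tree-construction method and to reduce each clause to an already-established construction, once the missing instance of $\diamondsuit$ has been supplied. For the latter, note first that $\diamondsuit(\lambda^+)$ comes for free: as $\lambda$ is uncountable and $2^\lambda=\lambda^+$, we have $(\lambda^+)^{<\lambda^+}=(\lambda^+)^\lambda\le(2^\lambda)^\lambda=\lambda^+$, so $\kappa^{<\kappa}=\kappa$ for $\kappa:=\lambda^+$, and hence $\diamondsuit(\lambda^+)$ holds by the main result of \cite{MR2596054}. Thus in both clauses a $\diamondsuit(\lambda^+)$-sequence is available to run alongside the given coherent $C$-sequence.

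For Clause~(1), fix a coherent $C$-sequence $\langle C_\alpha\mid\alpha<\lambda^+\rangle$ witnessing $\boxtimes^-(\lambda^+)$, and build the tree $T$ as a coherent subtree of ${}^{<\lambda^+}\lambda$ by recursion on levels. At each limit level $\alpha$ the nodes are threaded along $C_\alpha$, so that the coherence relation $C_{\bar\alpha}=C_\alpha\cap\bar\alpha$ on $\acc(C_\alpha)$ is passed down to the tree; and the $\diamondsuit(\lambda^+)$-sequence is used to guess, level by level, a candidate maximal antichain and to seal it, by forcing each node added at the relevant level to extend a guessed element. The hitting feature of $\boxtimes^-$---that $\sup(\nacc(C_\alpha)\cap A)=\alpha$ for some $\alpha$ whenever $A\s\lambda^+$ is cofinal---is exactly what makes the sealed antichains maximal throughout $T$, so that $T$ is Souslin. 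The two required flavors differ only in the extension policy at limit levels: to obtain a club-regressive tree one uses the regressive labelling of \cite{paper22} that precludes a copy of the Cantor tree, whereas to obtain a maximally-complete tree one instead continues every branch of length below the least $\chi$ with $\lambda^\chi>\lambda$, as in \cite[\S2]{paper48}; this choice of $\chi$ is exactly the threshold for which $\lambda^{<\chi}=\lambda$, keeping the levels of the tree of size $\le\lambda$.

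For Clause~(2), run the same style of construction from a $\boxtimes^-(T)$-sequence, but now target the uniformly coherent trees of \cite[\S2]{paper22}. Coherence of the $C$-sequence again yields a coherent tree; the surplus needed for \emph{uniform} coherence is a global consistency of the threading choices that plain coherence does not by itself provide, and it is here that the reflection hypothesis on $T$ becomes indispensable. Because every stationary subset of $T$ reflects, one can arrange the recursion so that the levels witnessing a failure of uniformity form a nonstationary set, which reflection then permits one to neutralize; invoking the construction of \cite[\S2]{paper22} with its distinguished stationary set taken to be $T$ delivers a uniformly coherent $\lambda^+$-Souslin tree.

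The Souslinity itself is the routine part, secured uniformly by the $\boxtimes^-$ hitting property together with $\diamondsuit(\lambda^+)$; the substance of the argument is the simultaneous maintenance of the extra structural invariant at every stage without spoiling the sealing of antichains. Accordingly, I expect the main obstacle to be twofold: in Clause~(1), verifying that the regressive (respectively, completion) policy never conflicts with threading along $C_\alpha$ and with sealing the guessed antichains; and in Clause~(2), confirming that stationary reflection genuinely upgrades coherence to uniform coherence, which requires tracking how the bounded disagreement between branches accumulates across limit levels of cofinality up to $\lambda$.
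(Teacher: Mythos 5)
Your derivation of $\diamondsuit(\lambda^+)$ and your treatment of the club-regressive tree in Clause~(1) do match the paper, which likewise obtains $\diamondsuit(\lambda^+)$ from the main result of \cite{MR2596054} and then quotes \cite[Proposition~2.3]{paper22}. But the remaining two conclusions are exactly where the real work lies, and your sketch defers it rather than supplies it. For the maximally-complete tree there is a concrete obstruction that you name as ``the main obstacle'' but do not resolve: writing $\chi$ for the least cardinal with $\lambda^\chi>\lambda$, a $\chi$-complete tree must extend \emph{every} branch whose length has cofinality $<\chi$, so no antichain can be sealed at such levels; yet $\boxtimes^-(\lambda^+)$ only promises hitting at stationarily many $\alpha$, and these may all have countable cofinality. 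Hence the construction cannot run from $\boxtimes^-(\lambda^+)$ directly with a ``completion policy''; one must first upgrade the guessing so that its hitting is concentrated on $E^{\lambda^+}_{\ge\chi}$. This is precisely what the paper does, invoking \cite[Theorem~4.13]{paper24} (for $\lambda^{<\chi}=\lambda$, $\boxtimes^-(\lambda^+)$ implies $\boxtimes'(E^{\lambda^+}_{\ge\chi})$) and then \cite[Proposition~4.11]{paper24}; note also that \cite[\S2]{paper48} only supplies the \emph{definition} of maximal completeness, not this construction. Asserting that the completion policy ``never conflicts'' with sealing is the very point that needs proof, and without the cofinality upgrade the sealing step cannot be carried out.

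Clause~(2) has a deeper gap. Your proposed mechanism --- arranging that ``the levels witnessing a failure of uniformity form a nonstationary set'' and letting reflection ``neutralize'' them --- is not an argument, and it misplaces where reflection enters. A uniformly coherent tree is closed under finite modifications, so each level is determined, up to finite changes, by any single one of its nodes; consequently one cannot seal an antichain by selectively refusing to extend branches, and the single-cofinal-set hitting of $\boxtimes^-(T)$ is too weak to drive such a construction. In the paper, reflection of the stationary subsets of $T$ is exploited \emph{before} any tree is built, to strengthen the combinatorial principle itself (\cite[Theorem~3.11(2)]{paper28}), and only then does a separate, substantially more involved construction (\cite[Theorem~6.35]{paper23}) produce the uniformly coherent $\lambda^+$-Souslin tree. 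Moreover, \cite[\S2]{paper22}, which you invoke for this step, contains the definition of uniform coherence rather than a construction from $\boxtimes^-(T)$ plus reflection. So both the maximal-completeness and the uniform-coherence halves of the lemma are missing their essential steps in your proposal.
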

\begin{proof}  By the main result of \cite{MR2596054},
since $2^{\lambda}=\lambda^+$, $\diamondsuit(\lambda^+)$ holds.

(1) By \cite[Proposition~2.3]{paper22}, $\boxtimes^-(\lambda^+)$ together with $\diamondsuit(\lambda^+)$ entails the existence of a club-regressive $\lambda^+$-Souslin tree.
In addition, for every cardinal $\chi$ such that $\lambda^{<\chi}=\lambda$,
by \cite[Theorem~4.13]{paper24}, $\boxtimes^-(\lambda^+)$  implies $\boxtimes'(E^{\lambda^+}_{\ge\chi})$,
and then \cite[Proposition~4.11]{paper24} entails the existence of a $\chi$-complete $\lambda^+$-Souslin tree.

(2) By \cite[Theorem~3.11(2)]{paper28} together with \cite[Theorem~6.35]{paper23}.
\end{proof}

\section{Applications}
\begin{cor}\label{cor31} 
Suppose that $\lambda$ is an uncountable cardinal.
Let $\chi\in\{2,\omega\}$. Then
$J_\chi[\lambda^+]$ is the $\lambda^+$-complete ideal generated by $$\{ A(d)\cap E^{\lambda^+}_\theta\mid \theta<\lambda=m(\theta,\lambda,\theta,\chi), d\in{}^{\lambda^+\times\lambda^+}\lambda\}\cup \ns_{\lambda^+}.$$
\end{cor}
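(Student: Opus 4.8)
The plan is to prove both inclusions between $J_\chi[\lambda^+]$ and the $\lambda^+$-complete ideal $\mathcal I$ generated by the displayed family, treating one cofinality class of $\lambda^+$ at a time.

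For the inclusion $\mathcal I\s J_\chi[\lambda^+]$, I would recall that $J_\chi[\lambda^+]$ is itself a $\lambda^+$-complete ideal, so it suffices to check that every generator lies in it. That $\ns_{\lambda^+}\s J_\chi[\lambda^+]$ is immediate from Definition~\ref{regressive}: given a nonstationary $S$, take a club $C$ disjoint from $S$, so that $S\cap C=\emptyset$ and the defining requirement is vacuous. For a generator $A(d)\cap E^{\lambda^+}_\theta$ with $\theta<\lambda=m(\theta,\lambda,\theta,\chi)$, I would pass from $m(\theta,\lambda,\theta,\chi)=\lambda$ to $m(\lambda,\lambda,\theta,\chi)=\lambda$ exactly as in the implication $(3)\Rightarrow(4)$ of Theorem~\ref{thmc}: since $\chi\le\omega\le\theta^+$, Lemma~\ref{lemma23b}(2) with Proposition~\ref{prop22}(2) gives $m(\theta,\lambda)=\lambda$, while monotonicity in the range gives $m(\theta,\theta,\theta,\chi)\le m(\theta,\lambda,\theta,\chi)=\lambda$, so Lemma~\ref{lemma23b}(3) yields $m(\lambda,\lambda,\theta,\chi)=\lambda$. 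Lemma~\ref{lemma23} then places $A(d)\cap E^{\lambda^+}_\theta$ into $J_\chi[\lambda^+]$.

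For the reverse inclusion, fix $S\in J_\chi[\lambda^+]$. As the limit ordinals form a club, modulo $\ns_{\lambda^+}$ we may write $S$ as the union of the $\le\lambda$ pieces $S\cap E^{\lambda^+}_\theta$ over regular $\theta\le\lambda$, and by $\lambda^+$-completeness of $\mathcal I$ it is enough to place each piece in $\mathcal I$. The piece at $\theta=\cf(\lambda)$ is nonstationary by Proposition~\ref{prop35} (as $\chi<\lambda$), hence in $\ns_{\lambda^+}\s\mathcal I$. For $\theta\neq\cf(\lambda)$ (so $\theta<\lambda$ and $\chi\le\theta$), if $S\cap E^{\lambda^+}_\theta$ is nonstationary we are again done; otherwise $J_\chi[\lambda^+]$ contains a stationary subset of $E^{\lambda^+}_\theta$, so Theorem~\ref{thmc} delivers $m(\theta,\lambda,\theta,\chi)=\lambda$, which is precisely the condition that makes the sets $A(d)\cap E^{\lambda^+}_\theta$ genuine generators of $\mathcal I$ for this $\theta$. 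When $\theta<\cf(\lambda)$ (in particular whenever $\lambda$ is regular), Proposition~\ref{sufficient}(1) supplies a $d$ with $E^{\lambda^+}_\theta\s A(d)$, so $A(d)\cap E^{\lambda^+}_\theta=E^{\lambda^+}_\theta$ is itself a generator and $S\cap E^{\lambda^+}_\theta\in\mathcal I$ trivially.

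The remaining case, and the one I expect to be the main obstacle, is $\lambda$ singular with $\cf(\lambda)<\theta<\lambda$ and $S\cap E^{\lambda^+}_\theta$ stationary. Here $E^{\lambda^+}_\theta\not\s A(d)$ in general, because the scale map $d(\eta,\beta):=f_\beta(\eta)$ of Proposition~\ref{sufficient}(3) captures only the good points $G(\vec f)$, and a stationary set of bad points of cofinality $\theta>\cf(\lambda)$ may genuinely lie in $J_\chi[\lambda^+]$: the diagonalization of Proposition~\ref{prop35}, which exhausts $\lambda$ by an increasing chain of length $\cf(\lambda)$ of sets of size $<\lambda$, has no analogue once $\theta>\cf(\lambda)$. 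Consequently one cannot route through $I[\lambda^+]$, and the covering must be extracted \emph{directly} from a witness $\langle f_i:\lambda^+\rightarrow[\lambda^+]^{<\chi}\mid i<\lambda^+\rangle$ for $S\cap E^{\lambda^+}_\theta\in J_\chi[\lambda^+]$ --- essentially a reversal of Lemma~\ref{lemma23}. The strategy I would pursue is to fix surjections $e_\beta:\lambda\rightarrow\beta+1$ and, exploiting that each $f_i(\beta)$ is finite, to define a map $d:\lambda^+\times\lambda^+\rightarrow\lambda$ whose coordinates record the $e_\beta$-pullbacks of the elements of $f_i(\beta)$; applying the capturing property of the witness to an injective regressive function should then force some coordinate of $d$ to be injective along a cofinal set, giving $S\cap E^{\lambda^+}_\theta\s A(d)$ modulo a club. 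Arranging the coding so that injectivity at the $\lambda$-level survives the passage through the surjections $e_\beta$ is the delicate point, and is where I expect the real work of the proof to lie.
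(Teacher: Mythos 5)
Your first inclusion is essentially the paper's own argument, and the skeleton of your second inclusion --- decomposing $S$ into cofinality classes, disposing of $\theta=\cf(\lambda)$ via Proposition~\ref{prop35}, and extracting $m(\theta,\lambda,\theta,\chi)=\lambda$ from the implication $(2)\implies(3)$ of Theorem~\ref{thmc} --- also matches the paper; your handling of $\theta<\cf(\lambda)$ via Proposition~\ref{sufficient}(1) is a correct shortcut (in particular your proof is complete for regular $\lambda$). But the proposal has a genuine gap exactly where you place it: for singular $\lambda$ and $\cf(\lambda)<\theta<\lambda$ you offer only a strategy, and that strategy does not work as described. If you record elements of $f_i(\beta)$ by chosen preimages under the surjections $e_\beta:\lambda\rightarrow\beta+1$, then injectivity of a regressive $f$ with $f(\beta)\in f_i(\beta)$ on a cofinal set says nothing about injectivity of $\beta\mapsto(\text{chosen preimage of }f(\beta)\text{ under }e_\beta)$, since the surjections vary with $\beta$; this is precisely the obstacle you flag, and you do not resolve it.

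The paper's proof sidesteps the obstacle rather than overcoming it: injectivity is never transported through a coding, it is manufactured at the level of $\theta$. Given witnesses $C$ and $\langle f_i\mid i<\lambda^+\rangle$ for $S\in J_\chi[\lambda^+]$ with $\min(C)>\theta$, define $d(\eta,\beta):=\sup(f_\eta(\beta)\cap\theta)$, a map into $\theta$ --- no surjections $e_\beta$ appear at all. Given $\alpha\in C\cap S\cap E^{\lambda^+}_\theta$ and a cofinal $B\s\alpha$, take $\bar B\s B$ cofinal of order type $\theta$ and test the witness against the regressive function $f(\beta):=\otp(\bar B\cap\beta)$, whose values increase cofinally in $\theta$ along $\bar B$. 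If $i<\alpha$ captures $f$ on a cofinal $B^*\s\bar B$, then $f(\beta)\in f_i(\beta)\cap\theta$, hence $d(i,\beta)\ge f(\beta)$, for every $\beta\in B^*$; consequently $d(i,\cdot)$ cannot be constant on any cofinal subset of $B^*$. Since a $\theta$-valued function on a set of order type $\theta$ is either constant on a cofinal subset or injective on a cofinal subset (by regularity of $\theta$: if every fiber is bounded, recursively build an injective cofinal set), it must be injective on a cofinal subset, which witnesses $\alpha\in A(d)$. Note that this device works uniformly for every regular $\theta\neq\cf(\lambda)$, so the paper needs no case split between $\theta<\cf(\lambda)$ and $\theta>\cf(\lambda)$. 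To complete your write-up, you would need to replace the pullback coding by this (or an equivalent) argument.
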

\begin{proof} 

By Definition~\ref{regressive}, $J_\chi[\lambda^+]$ is a $\lambda^+$-complete ideal covering $\ns_{\lambda^+}$.
Now, given a cardinal $\theta<\lambda$ such that $m(\theta,\lambda,\theta,\chi)=\lambda$ and a map $d\in{}^{\lambda^+\times\lambda^+}\lambda$,
to see that $A(d)\cap E^{\lambda^+}_\theta$ is in $J_\chi[\lambda^+]$, suppose that the former is nonempty, so that $\theta$ is regular.
By Proposition~\ref{prop22}(1) and Lemma~\ref{lemma23b}(2), $\cf(\lambda)\neq\theta$, and then
the implication $(3)\implies(4)$ of Theorem~\ref{thmc} implies that  $m(\lambda,\lambda,\theta,\chi)=\lambda$.
Then, by Lemma~\ref{lemma23}, $A(d)\cap E^{\lambda^+}_\theta\in J_\chi[\lambda^+]$.

Next, suppose that $S\in J_\chi[\lambda^+]$. As we are considering a $\lambda^+$-complete generated ideal,
it suffices to prove that for each cardinal $\theta\le\lambda$, 
if $(S\cap E^{\lambda^+}_\theta)\notin\ns_{\lambda^+}$
then $m(\theta,\lambda,\theta,\chi)=\lambda$ and there exists a map $d\in{}^{\lambda^+\times\lambda^+}\lambda$
such that $(S\cap E^{\lambda^+}_\theta\setminus A(d))\in \ns_{\lambda^+}$.
To this end, let $\theta\le\lambda$ be such that $S\cap E^{\lambda^+}_\theta$ is stationary.
In particular, $\theta$ is regular,
and, by Proposition~\ref{prop35}, $\theta\neq\cf(\lambda)$.
Now, by the implication $(2)\implies(3)$ of Theorem~\ref{thmc}, $m(\theta,\lambda,\theta,\chi)=\lambda$.
Finally, let $C$ and $\langle f_i\mid i<\lambda^+\rangle$ witness together that $S\in J_\chi[\lambda^+]$.
Without loss of generality, $\min(C)>\theta$.
Define a map $d:\lambda^+\times\lambda^+\rightarrow\theta$ via $d(\eta,\beta)=\sup(f_\eta(\beta)\cap\theta)$.
Now, given $\alpha\in C\cap S\cap E^{\lambda^+}_\theta$ and a cofinal subset $B\s\alpha$, 
we need to find $\eta<\alpha$ and a cofinal $B'\s B$ on 
which $\beta\mapsto d(\eta,\beta)$ is injective. 

Let $\bar B$ be a cofinal subset of $B$ of order-type $\theta$.
Pick a regressive function $f:\alpha\rightarrow\alpha$ such that $f(\beta)=\otp(\bar B\cap\beta)$ for all $\beta\in\bar B$.
Now, find $i<\alpha$ such that $B^*:=\{\beta\in\bar B\mid f(\beta)\in f_i(\beta)\}$ is cofinal in $\alpha$.
If there exists a cofinal $B'\s B^*$  on 
which $\beta\mapsto d(i,\beta)$ is injective, then we are done.
Otherwise, since $\otp(B')=\theta$, 
we may find a cofinal $B'\s B^*$ along with some $\tau<\theta$
such that $d(i,\beta)=\tau$ for all $\beta\in B'$.
Pick a large enough $\beta\in B'$ such that $\otp(\bar B\cap\beta)>\tau$.
As $\beta\in B^*$, $\tau<\otp(\bar B\cap\beta)=f(\beta)\le\sup(f_i(\beta)\cap\theta)=d(i,\beta)$. 
This is a contradiction.
\end{proof}

At the level of $\aleph_2$, the preceding characterization can be simplified. The next two results yield together Theorem~B:
\begin{cor}\label{thmb} The following are equivalent:
\begin{enumerate}
\item $J[\aleph_2]=\{ S\s\aleph_2\mid S\cap S^2_1\text{ is nonstationary}\}$;
\item $J[\aleph_2]\neq\ns_{\aleph_2}$;
\item $m(\aleph_0,\aleph_0,\aleph_0)=\aleph_1$;
\item There exists a nonmeager set of reals of size $\aleph_1$.
\end{enumerate}
\end{cor}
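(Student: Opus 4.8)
The plan is to prove the four statements equivalent by exhibiting a cycle of implications, relying heavily on the machinery already developed. The case $\lambda=\aleph_1$, $\kappa=\aleph_2=\lambda^+$ is the relevant instance, and since $\aleph_1$ is regular we have $\cf(\lambda)=\aleph_1$. Note that $S^2_1=E^{\aleph_2}_{\aleph_1}=E^{\lambda^+}_{\cf(\lambda)}$, and $S^2_0=E^{\aleph_2}_{\aleph_0}=E^{\lambda^+}_{\theta}$ for $\theta=\aleph_0$. The first task is to observe that $\theta=\aleph_0$ is the \emph{only} relevant cofinality: by Proposition~\ref{prop35}, any $S\in J[\aleph_2]$ has $S\cap S^2_1$ nonstationary, so the only way for $J[\aleph_2]$ to contain a stationary set is via a stationary subset of $S^2_0$. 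This already makes the implication $(1)\implies(2)$ immediate and pins down where the content lies.

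\textbf{The core implications via Theorem~\ref{thmc}.} I would route $(2)\iff(3)$ through Theorem~\ref{thmc} with the substitution $\theta:=\aleph_0$, $\lambda:=\aleph_1$, $\chi:=2$. The hypotheses $\chi\le\theta<\lambda$ and $\theta\neq\cf(\lambda)$ hold since $2\le\aleph_0<\aleph_1$ and $\aleph_0\neq\aleph_1$. By Proposition~\ref{prop35} any stationary $S\in J[\aleph_2]$ must (modulo nonstationary sets) concentrate on $S^2_0$, so ``$J[\aleph_2]$ contains a stationary set'' is equivalent to ``$J[\aleph_2]$ contains a stationary subset of $E^{\aleph_2}_{\aleph_0}$,'' which is clause~(2) of Theorem~\ref{thmc}. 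That theorem's clause~(3) reads $m(\aleph_0,\aleph_1,\aleph_0,2)=\aleph_1$, whereas our clause~(3) here is $m(\aleph_0,\aleph_0,\aleph_0)=m(\aleph_0,\aleph_0,\aleph_0,2)=\aleph_1$. The bridge between these two meeting numbers is Lemma~\ref{lemma23b}(3), which with $\theta=\aleph_0$, $\lambda=\aleph_1$ gives $m(\aleph_0,\aleph_0,\aleph_0,\chi)\le m(\aleph_1,\aleph_1,\aleph_0,\chi)\le\max\{m(\aleph_0,\aleph_0,\aleph_0,\chi),m(\aleph_0,\aleph_1)\}$, and one combines this with Lemma~\ref{lemma23b}(2) relating $m(\theta,\lambda)$ to $m(\theta,\lambda,\theta,\chi)$. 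Since $m(\aleph_0,\aleph_1)=\aleph_1$ by Proposition~\ref{prop22}(3) (as $\aleph_1<\aleph_0^{+\aleph_0}$), the ``$\max$'' collapses and the two meeting numbers are forced to agree in value at $\aleph_1$.

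\textbf{Identifying the combinatorial characterization.} The equivalence $(3)\iff(4)$ is where Proposition~\ref{bprop} does the work: its first part states outright that $m(\aleph_0,\aleph_0,\aleph_0)$ equals the least size of a nonmeager set of reals. Thus $m(\aleph_0,\aleph_0,\aleph_0)=\aleph_1$ says exactly that the least nonmeager set has size $\aleph_1$, which (since nonmeager sets of size $\aleph_1$ cannot be smaller than $\aleph_1$, and $\aleph_1$ is the least possible uncountable size) is equivalent to the existence of a nonmeager set of reals of size $\aleph_1$. For the sharp form $(1)$, I would return to Corollary~\ref{cor31}: under $(3)$, the ideal $J[\aleph_2]$ is generated by $\ns_{\aleph_2}$ together with sets $A(d)\cap E^{\aleph_2}_{\aleph_0}$, and since $m(\aleph_0,\aleph_1)=\aleph_1$ lets us invoke Proposition~\ref{sufficient}(1) to realize $E^{\aleph_2}_{\aleph_0}$ itself as $A(d)\cap E^{\aleph_2}_{\aleph_0}$ for a suitable $d$, the whole of $S^2_0$ lands in $J[\aleph_2]$. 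Combined with Proposition~\ref{prop35} closing off $S^2_1$, this gives $(1)$.

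\textbf{Anticipated obstacle.} I expect the genuine friction to be bookkeeping in the meeting-number inequalities: matching the $3$-cardinal invariant $m(\aleph_0,\aleph_0,\aleph_0)$ of Proposition~\ref{bprop} against the $4$-cardinal and mixed invariants appearing in Theorem~\ref{thmc} and Lemma~\ref{lemma23b}, and verifying that every ``$\le\lambda$'' bound is matched by the trivial ``$\ge\lambda$'' bound so that all values pin to $\aleph_1$ exactly. The rest is essentially a routing exercise through results already proved; the one place demanding genuine care is confirming that the asymmetry between $m(\theta,\lambda,\theta,\chi)$ (used in Theorem~\ref{thmc}) and $m(\theta,\theta,\theta)$ (used in Proposition~\ref{bprop}) is fully reconciled by Lemma~\ref{lemma23b}, with $m(\aleph_0,\aleph_1)=\aleph_1$ serving as the linchpin that forces the two to coincide.
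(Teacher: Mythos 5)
Your proposal is correct and follows essentially the same route as the paper's proof: $(1)\implies(2)$ is immediate, $(2)\implies(3)$ goes through the meeting-number machinery plus a diagonalization lower bound, and $(3)\iff(4)$ is Proposition~\ref{bprop}; the only cosmetic difference is that you invoke Theorem~\ref{thmc} and bridge $m(\aleph_1,\aleph_1,\aleph_0)$ to $m(\aleph_0,\aleph_0,\aleph_0)$ via Lemma~\ref{lemma23b}(3) together with $m(\aleph_0,\aleph_1)=\aleph_1$, where the paper invokes Corollary~\ref{cor31} and monotonicity --- the same content, since Corollary~\ref{cor31} is itself deduced from Theorem~\ref{thmc} and Lemma~\ref{lemma23}. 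One point in your favor: the paper's written proof establishes only $(1)\implies(2)\implies(3)\iff(4)$ and leaves the return to clause $(1)$ implicit, whereas you close the cycle explicitly --- using Corollary~\ref{cor31}/Lemma~\ref{lemma23} with Proposition~\ref{sufficient}(1) to realize $S^2_0$ as $A(d)\cap E^{\aleph_2}_{\aleph_0}$ (after trivially extending $d$ to domain $\aleph_2\times\aleph_2$, which only enlarges $A(d)$), and Proposition~\ref{prop35} for the reverse inclusion --- and that routing is sound.
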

\begin{proof} 
$(1)\implies(2)$:  $S^2_0$ is a stationary subset of $\aleph_2$ disjoint from $S^2_1$.

$(2)\implies(3)$: As $J[\aleph_2]\neq\ns_{\aleph_2}$, it follows from Corollary~\ref{cor31} and monotonicity
that $m(\aleph_0,\aleph_0,\aleph_0)\le m(\aleph_0,\aleph_1,\aleph_0)=\aleph_1$. 
Also, by a diagonalization argument, $m(\aleph_0,\aleph_0,\aleph_0)\ge\aleph_1$.

$(3)\iff(4)$: By Proposition~\ref{bprop}.
\end{proof}

\begin{cor}  The following are equivalent:
\begin{enumerate}
\item $J_\omega[\aleph_2]=\{ S\s\aleph_2\mid S\cap S^2_1\text{ is nonstationary}\}$;
\item $J_\omega[\aleph_2]\neq\ns_{\aleph_2}$;
\item $m(\aleph_0,\aleph_0,\aleph_0,\aleph_0)=\aleph_1$;
\item $\mathfrak b=\aleph_1$.
\end{enumerate}
\end{cor}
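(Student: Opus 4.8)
The plan is to follow the template of the preceding corollary, now with the advantage that $\chi=\omega$ lies strictly below $\lambda=\aleph_1$. Two implications are immediate: the equivalence $(3)\iff(4)$ is Proposition~\ref{bprop}, which identifies the quantity $m(\aleph_0,\aleph_0,\aleph_0,\aleph_0)$ of clause~(3) with $\mathfrak b_{\aleph_0}=\mathfrak b$ (and in particular both sides always dominate $\aleph_1$); and $(1)\implies(2)$ holds because $S^2_0$ is a stationary subset of $\aleph_2$ disjoint from $S^2_1$, so $(1)$ places this stationary set into $J_\omega[\aleph_2]$. It remains to close the cycle through $(2)\implies(3)$ and $(4)\implies(1)$.

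For $(2)\implies(3)$ I would fix a stationary $S\in J_\omega[\aleph_2]$. Because $\omega<\aleph_1$, Proposition~\ref{prop35} applies and yields that $S\cap S^2_1$ is nonstationary; as the successor ordinals are nonstationary too, $S\cap S^2_0$ is then a stationary subset of $E^{\aleph_2}_{\aleph_0}$ belonging to $J_\omega[\aleph_2]$. This is clause~(2) of Theorem~\ref{thmc} at $\theta=\aleph_0$, $\lambda=\aleph_1$, $\chi=\omega$ (the hypotheses $\chi\le\theta<\lambda$ and $\theta\neq\cf(\lambda)$ being clear), so clause~(4) of that theorem gives $m(\aleph_1,\aleph_1,\aleph_0,\omega)=\aleph_1$. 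The first inequality of Lemma~\ref{lemma23b}(3) then bounds $m(\aleph_0,\aleph_0,\aleph_0,\omega)\le\aleph_1$, and since this quantity is $\mathfrak b\ge\aleph_1$, equality holds, which is $(3)$.

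The implication $(4)\implies(1)$ carries the real weight, and I expect it to be the main obstacle, as it must exhibit a concrete witness putting $S^2_0$ into the ideal. From $\mathfrak b=m(\aleph_0,\aleph_0,\aleph_0,\omega)=\aleph_1$ I would first recover $m(\aleph_1,\aleph_1,\aleph_0,\omega)=\aleph_1$: the second inequality of Lemma~\ref{lemma23b}(3) bounds it by $\max\{m(\aleph_0,\aleph_0,\aleph_0,\omega),m(\aleph_0,\aleph_1)\}$, and Proposition~\ref{prop22}(3) gives $m(\aleph_0,\aleph_1)=\aleph_1$ since $\aleph_0<\aleph_1<\aleph_0^{+\aleph_0}$. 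This is exactly the hypothesis of Lemma~\ref{lemma23} with $\lambda=\aleph_1$, $\theta=\aleph_0$, $\chi=\omega$, so $A(d)\cap E^{\aleph_2}_{\aleph_0}\in J_\omega[\aleph_2]$ for every $d:\aleph_2\times\aleph_2\rightarrow\aleph_1$. Finally, Proposition~\ref{sufficient}(1) supplies a map with $E^{\aleph_2}_{<\aleph_1}=S^2_0\s A(d)$; after harmlessly padding its first coordinate from $\cf(\aleph_1)=\aleph_1$ up to $\aleph_2$ to fit the signature of Lemma~\ref{lemma23} (an operation that only enlarges $A(d)$), we conclude $A(d)\cap E^{\aleph_2}_{\aleph_0}=S^2_0$, whence $S^2_0\in J_\omega[\aleph_2]$.

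It then remains to upgrade $S^2_0\in J_\omega[\aleph_2]$ to the exact identity $(1)$. As $J_\omega[\aleph_2]$ is an ideal extending $\ns_{\aleph_2}$ and every limit ordinal below $\aleph_2$ has cofinality $\aleph_0$ or $\aleph_1$, any $S$ with $S\cap S^2_1$ nonstationary is covered by $S^2_0$ together with its nonstationary $S^2_1$-part and its nonstationary successor part, so $S\in J_\omega[\aleph_2]$; this is the inclusion $\supseteq$. The reverse inclusion is precisely Proposition~\ref{prop35}. Together they give $(1)$ and close the cycle $(1)\implies(2)\implies(3)\iff(4)\implies(1)$.
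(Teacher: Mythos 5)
Your proof is correct, and it follows the same cycle of implications that the paper's template (the proof of Corollary~\ref{thmb}, to which the paper defers) uses: $(1)\implies(2)$ via $S^2_0$, $(3)\iff(4)$ via Proposition~\ref{bprop}, and the meeting-number machinery for the rest. The difference is one of packaging and completeness. The paper routes $(2)\implies(3)$ through Corollary~\ref{cor31} (the abstract characterization of $J_\chi[\lambda^+]$) plus monotonicity, and leaves the return implication to $(1)$ entirely implicit; you instead unwind Corollary~\ref{cor31} into its ingredients --- Proposition~\ref{prop35}, Theorem~\ref{thmc}, Lemma~\ref{lemma23b}(3), Proposition~\ref{prop22}(3), Lemma~\ref{lemma23}, and Proposition~\ref{sufficient}(1) --- and, importantly, you spell out $(4)\implies(1)$, including the padding of $d$ from domain $\cf(\aleph_1)\times\aleph_2$ to $\aleph_2\times\aleph_2$ (correctly observing that this only enlarges $A(d)$) and the final decomposition argument showing that $S^2_0\in J_\omega[\aleph_2]$ yields the exact identity in clause~(1). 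What your version buys is a genuinely self-contained and checkable proof of the full equivalence, at the cost of re-deriving what Corollary~\ref{cor31} already encapsulates; the paper's version is shorter but relies on the reader to reconstruct exactly the closing implication you wrote out.
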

\begin{proof} The proof is similar to that of Corollary~\ref{thmb}.
\end{proof}

By Lemma~\ref{lemma23b}(1), the next corollary improves \cite[Corollary~2.5]{paper24}:
\begin{cor}\label{lemma26}  For cardinals $\theta=\cf(\theta)<\lambda$,
if 
$\max\{m(\theta,\theta,\theta),m(\theta,\lambda)\}=\lambda$
then 
$J[\lambda^+]$ contains a stationary subset of $E^{\lambda^+}_\theta$.
\end{cor}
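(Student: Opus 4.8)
The plan is to deduce Corollary~\ref{lemma26} directly from Theorem~\ref{thmc} by verifying that its hypotheses are met. The target conclusion is that $J[\lambda^+]=J_2[\lambda^+]$ contains a stationary subset of $E^{\lambda^+}_\theta$, which is exactly Clause~(2) of Theorem~\ref{thmc} with $\chi:=2$. Since $\theta$ is a regular cardinal with $\theta<\lambda$ and $\chi=2\le\theta$ holds trivially, to invoke the theorem I must also check the remaining standing hypothesis $\theta\neq\cf(\lambda)$, and then establish that one of the equivalent conditions (3) or (4) holds. The natural route is to target Clause~(4), namely $m(\lambda,\lambda,\theta,\chi)=\lambda$, and to extract it from the assumption $\max\{m(\theta,\theta,\theta),m(\theta,\lambda)\}=\lambda$.

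First I would observe that the hypothesis $\max\{m(\theta,\theta,\theta),m(\theta,\lambda)\}=\lambda$ forces both $m(\theta,\theta,\theta)\le\lambda$ and $m(\theta,\lambda)\le\lambda$. By Lemma~\ref{lemma23b}(3), applied with $\chi:=2$, one has
\[
m(\lambda,\lambda,\theta,2)\le\max\{m(\theta,\theta,\theta,2),m(\theta,\lambda)\}=\max\{m(\theta,\theta,\theta),m(\theta,\lambda)\}=\lambda,
\]
using the defining convention $m(\theta,\theta,\theta)=m(\theta,\theta,\theta,2)$. For the reverse inequality, Lemma~\ref{lemma23b}(1) gives $m(\theta,\lambda)\ge\lambda$, and feeding this into Lemma~\ref{lemma23b}(3) (or simply noting the lower bound built into that chain) yields $m(\lambda,\lambda,\theta,2)\ge\lambda$. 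Combining, $m(\lambda,\lambda,\theta,2)=\lambda$, which is precisely Clause~(4) of Theorem~\ref{thmc}.

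It remains to discharge the hypothesis $\theta\neq\cf(\lambda)$, since Theorem~\ref{thmc} is stated only under that side condition. Here I would argue that if $\theta=\cf(\lambda)$ then Proposition~\ref{prop22}(1) gives $m(\theta,\lambda)>\lambda$ (as $\theta=\cf(\lambda)<\lambda$), contradicting the hypothesis that $m(\theta,\lambda)\le\max\{m(\theta,\theta,\theta),m(\theta,\lambda)\}=\lambda$. Hence automatically $\theta\neq\cf(\lambda)$, and all standing assumptions of Theorem~\ref{thmc} are in force. Applying the implication $(4)\implies(2)$ then delivers a stationary subset of $E^{\lambda^+}_\theta$ inside $J_2[\lambda^+]=J[\lambda^+]$, as desired.

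The main obstacle, such as it is, lies in the bookkeeping around the meeting-number conventions: one must be careful that $m(\theta,\theta,\theta)$ denotes the three-cardinal quantity $m(\theta,\theta,\theta,2)$ rather than any four-cardinal variant, so that Lemma~\ref{lemma23b}(3) applies verbatim with $\chi=2$, and one must correctly match the lower-bound inequality $\lambda\le m(\theta,\lambda)$ from Lemma~\ref{lemma23b}(1) against the upper bound from Lemma~\ref{lemma23b}(3) to pin $m(\lambda,\lambda,\theta,2)$ exactly to $\lambda$. Once the conventions are aligned, the proof is a short assembly of earlier results with no genuinely hard step.
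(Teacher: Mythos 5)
Your architecture is exactly the paper's: extract $m(\lambda,\lambda,\theta,2)=\lambda$ from the hypothesis via Lemma~\ref{lemma23b}(3), then quote the implication $(4)\implies(2)$ of Theorem~\ref{thmc} with $\chi:=2$, recalling that $J[\lambda^+]=J_2[\lambda^+]$. Your extra check that $\theta\neq\cf(\lambda)$ --- via Proposition~\ref{prop22}(1), since $\theta=\cf(\lambda)<\lambda$ would give $m(\theta,\lambda)>\lambda$ against the hypothesis --- is correct, is genuinely needed to invoke the theorem, and is left implicit in the paper's one-line proof (a similar check is spelled out in the proof of Corollary~\ref{cor31}).

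The one step whose justification does not hold up is the lower bound $m(\lambda,\lambda,\theta,2)\ge\lambda$. The lower bound ``built into'' the chain of Lemma~\ref{lemma23b}(3) is $m(\theta,\theta,\theta,2)\le m(\lambda,\lambda,\theta,2)$, and $m(\theta,\theta,\theta)$ need not be anywhere near $\lambda$ (it is at most $2^\theta$); Lemma~\ref{lemma23b}(1) concerns $m(\theta,\lambda)$, which does not sit below $m(\lambda,\lambda,\theta,2)$ in any inequality you cite, so ``feeding (1) into (3)'' yields nothing. The bound you want is nonetheless true and cheap: if $\mathcal H$ were a witnessing family of size $<\lambda$, then, taking $X:=\theta$, the set $\bigcup\{h(\xi)\mid h\in\mathcal H,\ \xi\in X\}$ has size at most $\max\{|\mathcal H|,\theta\}<\lambda$ (for $\chi=2$ each $h(\xi)$ is empty or a singleton), so a constant function $g:X\rightarrow\lambda$ whose value avoids this set is met by no member of $\mathcal H$. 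Alternatively, chain Lemma~\ref{lemma23b}(1), Lemma~\ref{lemma23b}(2) (legitimate since $\chi=2\le\theta^+$), and the evident monotonicity $m(\theta,\lambda,\theta,2)\le m(\lambda,\lambda,\theta,2)$ obtained by restricting witnesses from $\lambda$ to $\theta\s\lambda$. (In fact, the proof of Lemma~\ref{lemma23}, which drives $(4)\implies(1)\implies(2)$ of Theorem~\ref{thmc}, only ever uses a witnessing sequence of length $\lambda$, i.e.\ the upper bound; the paper, too, treats the lower half of the equality as implicit.) With any of these repairs, your proof coincides with the paper's.
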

\begin{proof} By Lemma~\ref{lemma23b}(3), the hypothesis implies that $m(\lambda,\lambda,\theta)=\lambda$.
Now, appeal to the implication $(4)\implies(2)$ of Theorem~\ref{thmc}.
\end{proof}

\begin{cor}\label{cor34} For cardinals $\chi\le\theta\le\lambda$ such that 
$m(\theta,\theta,\theta,\chi)\le\lambda<\theta^{+\theta}$, $J_\chi[\lambda^+]$ contains a stationary subset of $E^{\lambda^+}_\theta$.
\end{cor}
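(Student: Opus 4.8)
The plan is to derive Corollary~\ref{cor34} by combining the monotonicity of the meeting numbers with Proposition~\ref{prop22}(3) and the equivalence $(3)\iff(2)$ from Theorem~\ref{thmc}. The hypothesis gives two bounds: an upper bound $m(\theta,\theta,\theta,\chi)\le\lambda$ on the "diagonal" meeting number, and the constraint $\lambda<\theta^{+\theta}$ that places $\lambda$ in the interval where Proposition~\ref{prop22}(3) computes $m(\theta,\lambda)$ exactly. First I would dispose of the degenerate case $\theta=\lambda$: here the hypothesis reads $m(\theta,\theta,\theta,\chi)\le\theta$, and since trivially $m(\theta,\theta,\theta,\chi)\ge\theta$ (a family of functions into $[\theta]^{<\chi}$ of size $<\theta$ cannot handle all $g$ on all $X\in[\theta]^\theta$), we get $m(\theta,\theta,\theta,\chi)=\theta=\lambda$; this is already the value $m(\lambda,\lambda,\theta,\chi)=\lambda$, so I may invoke Theorem~\ref{thmc} $(4)\implies(2)$ directly, after checking $\theta\neq\cf(\lambda)$.

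For the main case $\theta<\lambda<\theta^{+\theta}$, the goal is to establish $m(\lambda,\lambda,\theta,\chi)=\lambda$ and then apply Theorem~\ref{thmc}. By Proposition~\ref{prop22}(3), the interval condition $\theta<\lambda<\theta^{+\theta}$ yields $m(\theta,\lambda)=\lambda$. Now Lemma~\ref{lemma23b}(3) gives the upper bound
\[
m(\lambda,\lambda,\theta,\chi)\le\max\{m(\theta,\theta,\theta,\chi),\,m(\theta,\lambda)\}\le\max\{\lambda,\lambda\}=\lambda,
\]
using the hypothesis $m(\theta,\theta,\theta,\chi)\le\lambda$. The reverse inequality $m(\lambda,\lambda,\theta,\chi)\ge\lambda$ follows from Lemma~\ref{lemma23b}(3) first inequality together with Lemma~\ref{lemma23b}(1) (which gives $m(\theta,\lambda)\ge\lambda$, whence $m(\theta,\theta,\theta,\chi)\ge\dots$) — more directly, $m(\lambda,\lambda,\theta,\chi)\ge m(\theta,\theta,\theta,\chi)$ and a diagonalization shows any such number is $\ge\lambda$ when $\theta<\lambda$; I would cite the relevant clause rather than redo it. Thus $m(\lambda,\lambda,\theta,\chi)=\lambda$.

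Before quoting Theorem~\ref{thmc}, I must verify its standing hypotheses $\chi\le\theta<\lambda$ and $\theta\neq\cf(\lambda)$. The first holds by assumption together with the reduction to $\theta<\lambda$ above. For $\theta\neq\cf(\lambda)$: if we had $\theta=\cf(\lambda)<\lambda$, then Proposition~\ref{prop22}(1) would give $m(\theta,\lambda)>\lambda$, contradicting $m(\theta,\lambda)=\lambda$ obtained above; so automatically $\theta\neq\cf(\lambda)$. With all hypotheses in place, the implication $(4)\implies(2)$ of Theorem~\ref{thmc} yields that $J_\chi[\lambda^+]$ contains a stationary subset of $E^{\lambda^+}_\theta$, as desired.

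The only genuine subtlety — and the step I would treat most carefully — is confirming that the interval hypothesis $\lambda<\theta^{+\theta}$ is exactly what licenses the application of Proposition~\ref{prop22}(3) to conclude $m(\theta,\lambda)=\lambda$; everything else is bookkeeping with the monotonicity inequalities of Lemma~\ref{lemma23b}. There is no deep obstacle here: the corollary is essentially an assembly of the quantitative relations among the meeting numbers feeding into Theorem~\ref{thmc}, with the combinatorial content having already been absorbed into Lemma~\ref{lemma23} and Proposition~\ref{sufficient}.
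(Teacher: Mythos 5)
Your main case $\theta<\lambda$ is exactly the paper's proof: Proposition~\ref{prop22}(3) gives $m(\theta,\lambda)=\lambda$, Lemma~\ref{lemma23b}(3) together with the hypothesis $m(\theta,\theta,\theta,\chi)\le\lambda$ gives $m(\lambda,\lambda,\theta,\chi)\le\lambda$, and the implication $(4)\implies(2)$ of Theorem~\ref{thmc} concludes. Your explicit check that $\theta\neq\cf(\lambda)$ (via Proposition~\ref{prop22}(1)) is correct, and is a point the paper leaves implicit. Your worry about the reverse inequality $m(\lambda,\lambda,\theta,\chi)\ge\lambda$ is harmless, but the clause you cite (the first inequality of Lemma~\ref{lemma23b}(3)) does not yield it, since $m(\theta,\theta,\theta,\chi)$ need not be $\ge\lambda$; the bound instead comes from the $\lambda$ many constant functions, as with the maps $c_\gamma$ in the proof of Lemma~\ref{lemma23b}(3), and in any case only the upper bound is what feeds into the application of Theorem~\ref{thmc}.

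The genuine problem is your disposal of the degenerate case $\theta=\lambda$: the route you describe would fail. Theorem~\ref{thmc} carries the standing hypothesis $\theta<\lambda$, so it cannot be invoked when $\theta=\lambda$; worse, the check you defer, $\theta\neq\cf(\lambda)$, is guaranteed to fail there, since $E^{\lambda^+}_\theta\neq\emptyset$ forces $\theta$ to be regular, whence $\cf(\lambda)=\lambda=\theta$. Indeed, the conclusion of the corollary is outright false in that configuration: by Proposition~\ref{prop35}, for $\chi<\lambda$ no member of $J_\chi[\lambda^+]$ meets $E^{\lambda^+}_{\cf(\lambda)}=E^{\lambda^+}_\lambda$ in a stationary set. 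The case survives only because it is vacuous, and your bound ``trivially $m(\theta,\theta,\theta,\chi)\ge\theta$'' is too weak to see this. The correct observation is: since $\chi\le\theta$, any family witnessing the value of $m(\theta,\theta,\theta,\chi)$ also witnesses $m(\theta,\theta,\theta,\theta)$, so $m(\theta,\theta,\theta,\chi)\ge m(\theta,\theta,\theta,\theta)=\mathfrak b_\theta$ by Proposition~\ref{bprop}, and $\mathfrak b_\theta>\theta$ by a diagonalization; hence the hypothesis $m(\theta,\theta,\theta,\chi)\le\lambda=\theta$ can never hold. Replace your treatment of $\theta=\lambda$ by this vacuity argument (the paper's own proof silently assumes $\theta<\lambda$ when quoting Proposition~\ref{prop22}(3), so it glosses over the same point), and the rest of your argument coincides with the paper's.
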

\begin{proof} To avoid trivialities, suppose that $E^{\lambda^+}_\theta\neq\emptyset$, so that $\theta$ is regular.
Then, by Proposition~\ref{prop22}(3), $m(\theta,\lambda)=\lambda$.
So, by Lemma~\ref{lemma23b}(3), $m(\lambda,\lambda,\theta,\chi)=\lambda$,
Now, appeal to the implication $(4)\implies(2)$ of Theorem~\ref{thmc}.
\end{proof}

\begin{cor} If $2^{\aleph_0}<\aleph_\omega$, then for co-finitely many $n<\omega$, $S^{n}_0\in J[\aleph_{n}]$.\qed
\end{cor}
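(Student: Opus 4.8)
The plan is to reduce the statement to the two principal engines of the paper: Corollary~\ref{cor34}, which produces a \emph{stationary subset} of $E^{\lambda^+}_\theta$ inside $J_\chi[\lambda^+]$ out of a bound on a meeting number, and the regular-$\lambda$ clause of Theorem~C (derived from Theorem~\ref{thmc}), which upgrades such a stationary subset to the \emph{whole} of $E^{\lambda^+}_\theta$. Throughout I will take $\chi:=2$, $\theta:=\aleph_0$, and $\lambda:=\aleph_{n-1}$, so that $\lambda^+=\aleph_n$, $J_2[\lambda^+]=J[\aleph_n]$, and $E^{\lambda^+}_{\aleph_0}=E^{\aleph_n}_{\aleph_0}=S^n_0$.

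First I would unravel the hypothesis. Since $\aleph_1\le 2^{\aleph_0}<\aleph_\omega$ (the lower bound by Cantor's theorem), there is a fixed integer $j\ge 1$ with $2^{\aleph_0}=\aleph_j$. By Proposition~\ref{bprop}, $m(\aleph_0,\aleph_0,\aleph_0)=m(\aleph_0,\aleph_0,\aleph_0,2)$ equals the least size of a nonmeager set of reals; as the reals themselves form a nonmeager set of size $2^{\aleph_0}$, this value is $\le 2^{\aleph_0}=\aleph_j$. I would also record the arithmetic identity $\theta^{+\theta}=\aleph_0^{+\aleph_0}=\aleph_\omega$.

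Next, fix any integer $n>j$ (so in particular $n\ge 2$, hence $\theta=\aleph_0<\aleph_{n-1}=\lambda$ and $\cf(\lambda)=\aleph_{n-1}\ne\aleph_0$). I would then check the two displayed inequalities of Corollary~\ref{cor34}: one has $\chi=2\le\aleph_0\le\lambda$, and, because $n-1\ge j$,
\[
m(\aleph_0,\aleph_0,\aleph_0,2)\le\aleph_j\le\aleph_{n-1}=\lambda<\aleph_\omega=\aleph_0^{+\aleph_0}=\theta^{+\theta}.
\]
Hence Corollary~\ref{cor34} yields that $J[\aleph_n]=J_2[\lambda^+]$ contains a stationary subset of $S^n_0$.

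Finally I would pass from a stationary subset to the entire set. Since $\lambda=\aleph_{n-1}$ is a regular cardinal (every $\aleph_m$ with $m<\omega$ is regular) with $\aleph_0<\lambda$ and $\aleph_0\ne\cf(\lambda)$, the regular-$\lambda$ conclusion of Theorem~C applies: a stationary subset of $E^{\lambda^+}_{\aleph_0}$ lying in $J[\lambda^+]$ forces $E^{\lambda^+}_{\aleph_0}$ itself into $J[\lambda^+]$. Concretely, Corollary~\ref{cor34} gives clause~(2) of Theorem~\ref{thmc}, hence clause~(1), and then $E^{\lambda^+}_{\aleph_0}\in I[\lambda^+]$ for regular $\lambda$ places $S^n_0$ in $J[\aleph_n]$. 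As $j$ is fixed, this holds for every $n>j$, i.e.\ for co-finitely many $n<\omega$. I do not anticipate a genuine obstacle; the whole argument is a matter of matching parameters to the cited results. The one step demanding care is precisely this last upgrade, which crucially exploits the regularity of $\lambda=\aleph_{n-1}$ via Theorem~C—without it, Corollary~\ref{cor34} alone would deposit only a stationary subset of $S^n_0$ in the ideal rather than $S^n_0$ entire.
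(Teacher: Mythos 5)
Your proof is correct and follows exactly the argument the paper leaves implicit (the corollary is stated without proof, immediately after Corollary~\ref{cor34} and Theorem~\ref{thmc}): writing $2^{\aleph_0}=\aleph_j$, applying Corollary~\ref{cor34} with $\chi=2$, $\theta=\aleph_0$, $\lambda=\aleph_{n-1}$ for $n>j$, and then upgrading the stationary subset to all of $S^n_0$ via $E^{\lambda^+}_\omega\in I[\lambda^+]$ together with the implication $(2)\implies(1)$ of Theorem~\ref{thmc}. This is precisely the two-step pattern the paper itself executes in its final corollary (the one yielding Theorem~A), so there is nothing to correct.
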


In the following, $\ssh$ stands for \emph{Shelah's Strong Hypothesis} \cite[\S8.1]{MR1112424}.

\begin{cor}\label{cor33} Suppose that $\lambda$ is an uncountable cardinal for which $J[\lambda^+]=\ns_{\lambda^+}$. Then:
\begin{enumerate}
\item $\lambda<\beth_{\omega}$.
If $\ssh$ holds, then $\lambda\le 2^{\aleph_1}$;
\item for every regular cardinal $\theta<\lambda$,
either $2^\theta>\lambda$ or $\theta^{+\theta}\le\lambda$.
\end{enumerate}
\end{cor}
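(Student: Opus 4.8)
The plan is to read the hypothesis $J[\lambda^+]=\ns_{\lambda^+}$ as the assertion that $J[\lambda^+]$ (which is $J_2[\lambda^+]$) contains no stationary set, so that every sufficient condition established above for $J[\lambda^+]$ to carry a stationary set must fail. Two such engines are available: Corollary~\ref{cor34} in the case $\chi=2$, and Corollary~\ref{lemma26}. Both feed on meeting numbers, and the one elementary computation I would isolate first is that $m(\theta,\theta,\theta)\le 2^\theta$ for every infinite regular $\theta$: the family of all functions from $\theta$ to $\theta$, each reinterpreted as a function $\theta\to[\theta]^{<2}$ via $\xi\mapsto\{h(\xi)\}$, already witnesses the meeting number, and there are $\theta^\theta=2^\theta$ of them.

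For clause (2) this is essentially all that is needed. Suppose some regular $\theta<\lambda$ satisfied $2^\theta\le\lambda<\theta^{+\theta}$. Then $m(\theta,\theta,\theta,2)\le 2^\theta\le\lambda<\theta^{+\theta}$ with $2\le\theta\le\lambda$, so Corollary~\ref{cor34} would place a stationary subset of $E^{\lambda^+}_\theta$ into $J[\lambda^+]$, contradicting $J[\lambda^+]=\ns_{\lambda^+}$. Hence for every regular $\theta<\lambda$ we must have $2^\theta>\lambda$ or $\theta^{+\theta}\le\lambda$, which is clause (2).

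For the $\ssh$ half of clause (1) I would argue contrapositively: assume $\lambda>2^{\aleph_1}$ and produce a stationary member of $J[\lambda^+]$. Choose $\theta\in\{\aleph_0,\aleph_1\}$ with $\theta\neq\cf(\lambda)$, which is possible since the two candidates are distinct. Then $2^\theta\le 2^{\aleph_1}<\lambda$, so $m(\theta,\theta,\theta)\le 2^\theta\le\lambda$ by the computation above. Since $\theta\neq\cf(\lambda)$, $\ssh$ forces the relevant covering number $\cf([\lambda]^\theta,{\subseteq})$ down to $\lambda$ (this is the standard pcf consequence of $\ssh$), whence $m(\theta,\lambda)=\lambda$ by the bounds of Lemma~\ref{lemma23b}(1). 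Now $\max\{m(\theta,\theta,\theta),m(\theta,\lambda)\}=\lambda$, so Corollary~\ref{lemma26} yields a stationary subset of $E^{\lambda^+}_\theta$ inside $J[\lambda^+]$, a contradiction; note that the choice $\theta\neq\cf(\lambda)$ is exactly what keeps this consistent with Proposition~\ref{prop35}. Thus $\lambda\le 2^{\aleph_1}$.

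For the ZFC bound $\lambda<\beth_\omega$ I would again argue contrapositively, assuming $\lambda\ge\beth_\omega$ and seeking a stationary member of $J[\lambda^+]$. The clean subcase is when some regular $\theta<\lambda$ admits $2^\theta\le\lambda<\theta^{+\theta}$: then Corollary~\ref{cor34} applies just as in clause (2). Here the hypothesis $\lambda\ge\beth_\omega$ is used to keep the power function tame below $\beth_\omega$ (every $\sigma<\beth_\omega$ has $2^\sigma<\beth_\omega\le\lambda$), which typically lets one locate such a $\theta$ near $\lambda$. The genuinely hard case, and the step I expect to be the main obstacle, is when no such $\theta$ exists — notably when $\lambda$ is a fixed point of the $\aleph$-function, so that $\theta^{+\theta}<\lambda$ for every regular $\theta<\lambda$ and Corollary~\ref{cor34} is unavailable. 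There one must instead exhibit a regular $\theta<\lambda$ with $\theta\neq\cf(\lambda)$, $2^\theta\le\lambda$, and $m(\theta,\lambda)=\lambda$, and feed it to Corollary~\ref{lemma26}; bounding $m(\theta,\lambda)$ by $\cf([\lambda]^\theta,{\subseteq})$ through Lemma~\ref{lemma23b}(1) and pinning the latter to $\lambda$ is a pcf computation, and it is precisely this covering-number bookkeeping for large $\lambda$ — together with the verification that $\beth_\omega$, the closure of $\aleph_0$ under $\sigma\mapsto 2^\sigma$, is the right threshold making $m(\theta,\theta,\theta)\le 2^\theta\le\lambda$ compatible with $m(\theta,\lambda)=\lambda$ — that I anticipate carrying the real weight of the argument.
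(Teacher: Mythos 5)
Your treatment of clause (2) is correct and is exactly the paper's argument: negating the conclusion gives a regular $\theta<\lambda$ with $m(\theta,\theta,\theta)\le 2^\theta\le\lambda<\theta^{+\theta}$, and Corollary~\ref{cor34} with $\chi=2$ then puts a stationary subset of $E^{\lambda^+}_\theta$ into $J[\lambda^+]$, contradicting $J[\lambda^+]=\ns_{\lambda^+}$. Your \ssh{} argument also has the paper's structure (choose $\theta\in\{\aleph_0,\aleph_1\}$ with $\theta\neq\cf(\lambda)$, bound $m(\theta,\theta,\theta)\le 2^\theta<\lambda$, show $m(\theta,\lambda)=\lambda$, and feed Corollary~\ref{lemma26}), but the step computing $m(\theta,\lambda)$ is wrong. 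You claim that \ssh{} drives $\cf([\lambda]^\theta,{\s})$ down to $\lambda$ whenever $\theta\neq\cf(\lambda)$; this fails outright, in ZFC, whenever $\cf(\lambda)\le\theta<\lambda$. Indeed, given any family $\{Y_\alpha\mid\alpha<\lambda\}\s[\lambda]^\theta$, fix an increasing sequence $\langle\lambda_i\mid i<\cf(\lambda)\rangle$ cofinal in $\lambda$ and pick $x_i\in\lambda\setminus\bigcup_{\alpha<\lambda_i}Y_\alpha$ (possible, as $\lambda$ is then a limit cardinal and $|\lambda_i|\cdot\theta<\lambda$); no $Y_\alpha$ can contain a set $X\in[\lambda]^\theta$ covering $\{x_i\mid i<\cf(\lambda)\}$, so $\cf([\lambda]^\theta,{\s})\ge\lambda^+$. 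Your own choice of $\theta$ lands in this bad case precisely when $\cf(\lambda)=\aleph_0$, where you are forced to take $\theta=\aleph_1$. The repair is to use the \emph{other} upper bound in Lemma~\ref{lemma23b}(1): by \cite[Corollary~2.19]{MR3356615}, \ssh{} together with $\lambda>2^\theta$ and $\cf(\lambda)\neq\theta$ yields $\cf([\lambda]^\theta,{\supseteq})=\lambda$; this is what the paper invokes, and it works uniformly in both cases.

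The first half of clause (1), $\lambda<\beth_\omega$, you do not actually prove, as you yourself concede. Your ``clean subcase'' (some regular $\theta$ with $2^\theta\le\lambda<\theta^{+\theta}$) need not occur at all --- as you note, it is vacuous when $\lambda$ is a fixed point of the $\aleph$-function, and nothing about $\lambda\ge\beth_\omega$ ``locates such a $\theta$ near $\lambda$'' in the absence of GCH-like hypotheses --- while in the remaining case you only restate what would be needed (a regular $\theta\neq\cf(\lambda)$ with $2^\theta\le\lambda$ and $m(\theta,\lambda)=\lambda$) without supplying an argument. That is the genuinely nontrivial half: producing such a $\theta$ below $\beth_\omega$ for an arbitrary $\lambda\ge\beth_\omega$ rests on Shelah's pcf-theoretic covering theorems, not on bookkeeping. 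To be fair, the paper does not reprove it either; it points to the proof of \cite[Corollary~4.7]{paper24}, where exactly this input is extracted from Shelah's results and fed into the predecessor of Corollary~\ref{lemma26}. So for this half your proposal identifies the right pipeline but is missing its essential external ingredient, and as written the proof is incomplete.
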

\begin{proof} (1) The first part is explained in the proof of \cite[Corollary~4.7]{paper24}.
Now, given $\lambda>2^{\aleph_1}$, find $\theta\in\{\aleph_0,\aleph_1\}$ such that $\cf(\lambda)\neq\theta$.
By \cite[Corollary~2.19]{MR3356615}, since $\ssh$ holds, it follows from $\lambda>2^\theta$ and $\cf(\lambda)\neq\theta$ that
$\cf([\lambda]^\theta,{\supseteq})=\lambda$. So, by Lemma~\ref{lemma23b}(1), $m(\theta,\lambda)=\lambda$.
In addition, $\lambda>2^\theta\ge m(\theta,\theta,\theta)$.
Now, appeal to Corollary~\ref{lemma26}.

(2) By Corollary~\ref{cor34}.
\end{proof}

\begin{cor}\label{cor36} Suppose that
$\lambda$ is an uncountable cardinal such that $2^\lambda=\lambda^+$, $\square(\lambda^+)$ holds,
and there exists a  regular cardinal $\theta$ such that $m(\theta,\theta,\theta,\omega)\le\lambda<\theta^{+\theta}$ (e.g., $2^\theta\le\lambda<\theta^{+\theta}$).
Then there exist a club-regressive $\lambda^+$-Souslin tree
and a maximally-complete $\lambda^+$-Souslin tree.

If, in addition, all stationary subsets of $E^{\lambda^+}_{\theta}$ reflect,
then there exists a uniformly coherent $\lambda^+$-Souslin tree.
\end{cor}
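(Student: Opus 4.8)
The plan is to assemble the final statement, Corollary~\ref{cor36}, directly from the machinery already developed, since all the heavy lifting has been done in the earlier sections. The corollary has two conclusions, and both are consequences of producing a stationary set in the ideal $J_\omega[\lambda^+]$ and then feeding it into the Souslin-tree constructions of Section~\ref{seconnection}. First I would observe that the hypothesis provides a regular cardinal $\theta$ with $m(\theta,\theta,\theta,\omega)\le\lambda<\theta^{+\theta}$, which is exactly the hypothesis of Corollary~\ref{cor34} with $\chi:=\omega$. Note that $\chi=\omega\le\theta$ holds automatically once $\theta$ is infinite and regular; if $\theta$ were finite there would be nothing to do, so I would quietly restrict attention to the interesting case. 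Applying Corollary~\ref{cor34} then yields that $J_\omega[\lambda^+]$ contains a stationary subset $S$ of $E^{\lambda^+}_\theta$.

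Next I would invoke Theorem~\ref{m}. Its hypotheses are $\square(\lambda^+)$ (given) and $(\lambda^+)^{<\lambda^+}=\lambda^+$; the latter follows from $2^\lambda=\lambda^+$ by the standard cardinal-arithmetic computation $(\lambda^+)^{<\lambda^+}=(\lambda^+)^\lambda=2^\lambda=\lambda^+$. Since the $S$ produced above is stationary and lies in $J_\omega[\lambda^+]$, Theorem~\ref{m} delivers $\boxtimes^-(S)$. Because $S\s\lambda^+$ is stationary and $\boxtimes^-$ is monotone upward in the target set (a $\boxtimes^-(S)$-sequence is, in particular, a $\boxtimes^-(\lambda^+)$-sequence, as every cofinal $A$ hits the relevant condition on stationarily many $\alpha\in S\s\lambda^+$), I would conclude $\boxtimes^-(\lambda^+)$ as well. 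This is the point to be slightly careful about, so I would state the monotonicity explicitly rather than leave it implicit.

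With $\boxtimes^-(\lambda^+)$ in hand together with $2^\lambda=\lambda^+$, the first conclusion is immediate from Lemma~\ref{lemma42}(1), which produces both a club-regressive $\lambda^+$-Souslin tree and a maximally-complete one. For the second conclusion, I would instead keep the finer object $\boxtimes^-(S)$ and use the reflection hypothesis: if in addition every stationary subset of $E^{\lambda^+}_\theta$ reflects, then since $S\s E^{\lambda^+}_\theta$ is stationary, $S$ is a stationary set all of whose stationary subsets reflect, so the hypothesis of Lemma~\ref{lemma42}(2) is met with $T:=S$. That lemma then yields a uniformly coherent $\lambda^+$-Souslin tree. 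The main thing to get right is matching the reflection hypothesis to the particular $S$: the reflection is assumed on all of $E^{\lambda^+}_\theta$, and any stationary subset of $S$ is also a stationary subset of $E^{\lambda^+}_\theta$, so it reflects, which is precisely what Lemma~\ref{lemma42}(2) requires. Beyond that bookkeeping, there is no genuine obstacle here; the corollary is a clean corollary in the literal sense, chaining Corollary~\ref{cor34} $\to$ Theorem~\ref{m} $\to$ Lemma~\ref{lemma42}.
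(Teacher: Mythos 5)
Your proposal is correct and follows essentially the same route as the paper's proof: Corollary~\ref{cor34} produces a stationary $S\in J_\omega[\lambda^+]$ with $S\s E^{\lambda^+}_\theta$, Theorem~\ref{m} (whose hypothesis $(\lambda^+)^{<\lambda^+}=\lambda^+$ follows from $2^\lambda=\lambda^+$, as you note) yields $\boxtimes^-(S)$, and Lemma~\ref{lemma42} finishes. The only cosmetic difference is that the paper phrases the middle step as ``$\boxtimes^-(T)$ holds for every stationary $T\s S$'' (using downward closure of the ideal), whereas you apply Theorem~\ref{m} once and invoke the upward monotonicity of $\boxtimes^-$ to get $\boxtimes^-(\lambda^+)$ for clause (1) --- both are valid and the monotonicity you spell out is indeed the step the paper leaves implicit.
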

\begin{proof} 
Suppose that there exists a cardinal $\theta$ as above.
Then, by Corollary~\ref{cor34},  $J_\omega[\lambda^+]$ contains some stationary subset $S\s E^{\lambda^+}_\theta$.
Then, by Theorem~\ref{m}, $\boxtimes^-(T)$ holds for every stationary subset $T\s S$.
Now, appeal to Lemma~\ref{lemma42}.
\end{proof}

\begin{prop}\label{prop39} Suppose that $\theta=\theta^{<\theta}<\mu$ are cardinals.
In the forcing extension by $\add(\theta,\mu)$, $m(\theta,\theta,\theta)=\theta^+$.
\end{prop}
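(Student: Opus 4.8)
The plan is to establish the two inequalities $m(\theta,\theta,\theta)\ge\theta^+$ and $m(\theta,\theta,\theta)\le\theta^+$ in the extension $V[G]$ by $\add(\theta,\mu)$ separately. First I would record the preservation facts: since $\theta=\theta^{<\theta}$, the cardinal $\theta$ is regular and $\add(\theta,\mu)$ is $<\theta$-closed and $\theta^+$-cc, so in $V[G]$ we still have $\theta$ regular, $\theta^{<\theta}=\theta$, and $\theta^+$ preserved, so the statement is meaningful there. The lower bound $m(\theta,\theta,\theta)\ge\theta^+$ is a ZFC fact for regular $\theta$, obtained by the same diagonalization used for the $\aleph_0$ case: given any family $\langle h_i\mid i<\theta\rangle$ of functions from $\theta$ to $\theta$, recursively choose $g(\xi)\in\theta\setminus\{h_i(\xi)\mid i<\xi\}$ (possible since $|\xi|<\theta$), so that each agreement set $\{\xi<\theta\mid g(\xi)=h_i(\xi)\}$ is contained in $i+1$ and hence has size $<\theta$; taking $X:=\theta$ shows that no $\theta$-sized family is meeting. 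All the real work is therefore in the upper bound, which applies in $V[G]$ where $\theta$ is regular.

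For $m(\theta,\theta,\theta)\le\theta^+$ I would first put the generic in a convenient form. Since $\theta^{<\theta}=\theta$ and $\mu\cdot\theta=\mu$, the forcing $\add(\theta,\mu)$ is isomorphic to $\mathrm{Fn}(\mu\times\theta,\theta,\theta)$ (a standard fact about Cohen forcing once $\theta=\theta^{<\theta}$), so I may assume $G$ adds a sequence $\langle c_\alpha\mid\alpha<\mu\rangle$ of mutually generic functions $c_\alpha:\theta\to\theta$; concretely, each $c_\alpha$ is $\add(\theta,1)$-generic over $W_\alpha:=V[\langle c_\beta\mid\beta\neq\alpha\rangle]$. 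I would then fix any $J\in[\mu]^{\theta^+}$ and claim that $\mathcal H:=\{\,\xi\mapsto\{c_\alpha(\xi)\}\mid\alpha\in J\,\}$ is a meeting family: each member is a function from $\theta$ to $[\theta]^1\subseteq[\theta]^{<2}$, and $g(\xi)\in\{c_\alpha(\xi)\}$ just means $g(\xi)=c_\alpha(\xi)$, while $|\mathcal H|=\theta^+$.

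To verify the claim I would take an arbitrary $X\in[\theta]^\theta$ and $g:X\to\theta$ in $V[G]$. The decisive point is that, by the $\theta^+$-cc together with a count of nice names (coding $g$ as a subset of $\theta\times\theta$, with each of the $\theta$-many antichains of size $\le\theta$ and each condition of support $<\theta$), $g$ admits a name whose support $I\subseteq\mu$ has size at most $\theta$; hence $g,X\in V[G\restriction I]\subseteq W_\alpha$ for every $\alpha\notin I$. Since $|I|\le\theta<\theta^+=|J|$, I may pick $\alpha\in J\setminus I$, so that $c_\alpha$ is $\add(\theta,1)$-generic over a model containing $X$ and $g$. A density argument then finishes: for each $\beta<\theta$ the set of conditions $p$ for which $p(\xi)=g(\xi)$ for some $\xi\in X\cap\dom(p)$ with $\xi>\beta$ is dense (here I use that $X$ is cofinal in the regular cardinal $\theta$ and that conditions have size $<\theta$) and lies in $W_\alpha$, so genericity of $c_\alpha$ yields such a $\xi$ above every $\beta$; thus $\{\xi\in X\mid c_\alpha(\xi)=g(\xi)\}$ is cofinal in $\theta$, of size $\theta$, i.e.\ this member of $\mathcal H$ meets $g$.

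I expect the crux to be the support estimate $|I|\le\theta$: this is exactly what lets a single fixed $\theta^+$-sized block of Cohen functions catch every function of the extension, and it is where $\theta=\theta^{<\theta}$ (through the $\theta^+$-cc and the counting of nice names) is genuinely used. The remaining friction is bookkeeping—the isomorphism recasting the binary $\add(\theta,\mu)$ as a product of copies of $\add(\theta,1)$ with range $\theta$, and the routine mutual-genericity facts for product forcing—none of which I expect to present real difficulty once $\theta=\theta^{<\theta}$ is in hand.
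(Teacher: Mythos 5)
Your proof is correct, but it reaches the upper bound by a genuinely different mechanism than the paper's. The lower bound is the same diagonalization in both cases. For the upper bound, you factor $\add(\theta,\mu)$ as a ${<}\theta$-support product, use the $\theta^+$-cc to count nice names and thereby localize any given $g:X\rightarrow\theta$ of the extension into an intermediate model $V[G\restriction I]$ with $|I|\le\theta$, and then finish by a trivial density argument via mutual genericity: any coordinate $\alpha\in J\setminus I$ gives a Cohen function $c_\alpha$ generic over a model containing $g$, hence agreeing with $g$ cofinally often. The paper never factors the forcing: it works with the fibers $h_\beta$ of the single generic function $h:\theta\times\mu\rightarrow\theta$, argues contrapositively that a failure of meeting would produce $A\in[\theta]^{\theta}$, $B\in[\theta^+]^{\theta^+}$ and $g:A\rightarrow\theta$ (all possibly new sets of the extension) with $g(\alpha)\neq h(\alpha,\beta)$ for all $\alpha\in A$ and $\beta\in B$, and refutes this by a single density argument over $V$ with names $\dot A,\dot B,\dot g$, amalgamating conditions through an elementary submodel $M\prec(\mathcal H_{\theta^{++}},{\in})$ with ${}^{<\theta}M\subseteq M$: a condition $p'$ decides some $\beta\in\dot B$ with $\beta\notin M$, its trace $q:=p'\cap M$ is extended inside $M$ to a condition $q'$ deciding some $\alpha\in\dot A$ beyond the first-coordinate support of $p'$ together with the value of $\dot g(\alpha)$, and then $p'\cup q'$ is extended at the free pair $(\alpha,\beta)$. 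Note that the hypothesis $\theta^{<\theta}=\theta$ enters the two arguments at different places: in yours through the $\theta^+$-cc nice-name count; in the paper's through the existence of the ${<}\theta$-closed submodel $M$ (the cc is used there only to shrink $\dot A,\dot g$ to size $\theta$). Your route is more modular and rests on standard product-forcing facts, at the price of the factorization and support bookkeeping; the paper's is self-contained in one density computation and actually proves a stronger, polarized-partition-style claim --- agreement is found inside \emph{every} $B\in[\theta^+]^{\theta^+}$ of the extension, including generic ones --- whereas in your approach that issue never arises because the meeting family is indexed by a block $J$ fixed in advance in the ground model.
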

\begin{proof} By a diagonalization argument, $m(\theta,\theta,\theta)\ge\theta^+$,
so we focus on the verifying the other inequality.
Let $\mathbb P$ denote $\add(\theta,\mu)$.
Recall that the conditions in $\mathbb P$ are functions $p:a\rightarrow\theta$ with $a\s \theta\times \mu$ and $|a|<\theta$.
This is a $\theta$-closed notion of forcing, and since $\theta^{<\theta}=\theta$, it has the $\theta^{+}$-cc,
so that $\mathbb P$ preserves the cardinal structure.
Let $H$ be $V$-generic, and work in $V[H]$. Let $h:=\bigcup H$, so that $h$ is a function from $\theta\times\mu$ to $\theta$.
For every $\beta<\theta^+$, derive the fiber map $h_\beta:\theta\rightarrow\theta$ via $h_\beta(\alpha):=h(\alpha,\beta)$.
To show that $m(\theta,\theta,\theta)\le\theta^+$,
it suffices to prove that for every function $g:X\rightarrow\theta$ with $X\in[\theta]^\theta$,
there exists $\beta<\theta^+$ such that $|g\cap h_\beta|=\theta$.
Now, if this fails, we may fix $g$ as above along with $\tau<\theta$ and $B\in[\theta^+]^{\theta^+}$ such that
$\dom(g\cap h_\beta)\s \tau$ for every $\beta\in B$,
and then, setting $A:=X\setminus\tau$,
we would get that $g(\alpha)\notin\{ h_\beta(\alpha)\mid \beta\in B\}$ for every $\alpha\in A$.
Therefore, it suffices to prove the following.
\begin{claim} In $V[H]$, for all $A\in[\theta]^{\theta}$ and $B\in[\theta^+]^{\theta^+}$,
and every function $g:A\rightarrow\theta$, there exist $\alpha\in A$ and $\beta\in B$ such that $g(\alpha)=h(\alpha,\beta)$.
\end{claim}
\begin{proof} We run a standard density argument (see, for instance, the proof of \cite[Theorem~27]{strongcoloringpaper}). 
Let $p$ be an arbitrary condition in $\mathbb P$
forcing that $\dot A,\dot B,\dot g$ are names for objects as above.
As $\mathbb P$ has the $\theta^+$-cc, we may assume that $|\dot A|=|\dot g|=\theta$.
Fix an elementary submodel $M\prec (\mathcal H_{\theta^{++}},{\in})$ of size $\theta$ such that ${}^{<\theta}M\s M$ and $\mathbb P,\dot A,\dot B,\dot g\in M$.
Find an extension $p'$ of $p$ and an ordinal $\beta\in\theta^+\setminus M$ such that $p'\Vdash\check\beta\in\dot B$.
Set $q:=p'\cap M$ and note that $q$ is a legitimate condition lying in $M$.
Fix a large enough $\epsilon<\theta$ such that 
$\{ \alpha<\theta\mid \exists \eta<\mu[(\alpha,\eta)\in\dom(p')]\}\s\epsilon$.
Now, in $M$, find a condition $q'$ extending $q$, an ordinal $\alpha\in\theta\setminus\epsilon$ and an ordinal $\tau<\theta$
such that $q'\Vdash\check\alpha\in\dot A$ and $q'\Vdash \dot g(\check \alpha)=\check \tau$.
As $q'$ is an extension of $q$ lying in $M$, the definition of $q$ entails that $p'\cup q'$ is a legitimate condition.
As $\alpha\notin\epsilon$ and $\beta\notin M$, it follows that $(\alpha,\beta)\notin\dom(p'\cup q')$.
So, $p'\cup q'\cup\{ ((\alpha,\beta),\tau)\}$ is a legitimate condition forcing that 
$g(\alpha)=h(\alpha,\beta)$, as desired.
\end{proof}
This completes the proof.
\end{proof}

\begin{cor}\label{cor310} Suppose that $\theta,\lambda$ are cardinals such that:
\begin{itemize}
\item $\theta<\lambda<\theta^{+\theta}$;
\item $\theta^{<\theta}=\theta$ and $2^\lambda=\lambda^+$;
\item $\square(\lambda^+)$ holds. 
\end{itemize}
After forcing with $\add(\theta,\theta^+)$,
there exist a club-regressive $\lambda^+$-Souslin tree
and a maximally-complete $\lambda^+$-Souslin tree.

If, in the extension, 
every stationary subset of $E^{\lambda^+}_{\theta}$ reflect,
then there exists a uniformly coherent $\lambda^+$-Souslin tree.
\end{cor}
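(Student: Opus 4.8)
The plan is to reduce Corollary~\ref{cor310} to the machinery already assembled in the previous section, specifically Corollary~\ref{cor36}, by verifying that all of its hypotheses survive the forcing $\add(\theta,\theta^+)$. The target conclusion of Corollary~\ref{cor310} is verbatim the conclusion of Corollary~\ref{cor36}, so the only real work is to confirm the three standing assumptions of Corollary~\ref{cor36} hold in the extension: that $2^\lambda=\lambda^+$, that $\square(\lambda^+)$ holds, and that there is a regular cardinal $\theta'$ with $m(\theta',\theta',\theta',\omega)\le\lambda<(\theta')^{+(\theta')}$. The natural candidate for $\theta'$ is the given $\theta$ itself, and the plan is to show $m(\theta,\theta,\theta,\omega)=\theta^+\le\lambda$ after the forcing, using Proposition~\ref{prop39}.

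First I would analyze the forcing $\mathbb P:=\add(\theta,\theta^+)$. Since $\theta^{<\theta}=\theta$, this poset is $\theta$-closed and has the $\theta^+$-cc, so it preserves all cardinals and cofinalities; in particular $\theta$ remains regular, the interval condition $\theta<\lambda<\theta^{+\theta}$ is preserved (as the cardinal structure on $[\theta,\theta^{+\theta})$ is untouched), and $\square(\lambda^+)$, being a combinatorial statement about a fixed sequence indexed below $\lambda^+$, is preserved by a forcing of size $\theta^+\le\lambda<\lambda^+$ — more carefully, a $\lambda^+$-cc forcing preserves $\square(\lambda^+)$-sequences, and here $|\mathbb P|=\theta^+\le\lambda<\lambda^+$. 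Next I would check $2^\lambda=\lambda^+$ survives: since $|\mathbb P|=\theta^+\le\lambda$ and $\mathbb P$ has the $\theta^+$-cc, the number of nice names for subsets of $\lambda$ is at most $(\theta^+)^\lambda\cdot(2^\lambda)=2^\lambda=\lambda^+$ in the ground model, so $2^\lambda=\lambda^+$ continues to hold in $V[H]$.

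The substantive input is Proposition~\ref{prop39}, which computes $m(\theta,\theta,\theta)=\theta^+$ in the extension by $\add(\theta,\mu)$ for any $\mu>\theta$; taking $\mu:=\theta^+$ gives $m(\theta,\theta,\theta)=\theta^+$ in $V[H]$. I would then feed this into the $4$-cardinal meeting number via Lemma~\ref{lemma23b}: the relevant bound is $m(\theta,\theta,\theta,\omega)\le m(\theta,\theta,\theta)$, which holds because a family witnessing the (harder) $\chi=2$ value of $m(\theta,\theta,\theta)$ also witnesses the value for $\chi=\omega$ — concretely, singletons are sets of size $<\omega$, so any $\mathcal H$ for $m(\theta,\theta,\theta,2)$ works for $m(\theta,\theta,\theta,\omega)$, giving $m(\theta,\theta,\theta,\omega)\le m(\theta,\theta,\theta,2)=m(\theta,\theta,\theta)=\theta^+\le\lambda$. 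Combined with $\lambda<\theta^{+\theta}$, this verifies the hypothesis $m(\theta,\theta,\theta,\omega)\le\lambda<\theta^{+\theta}$ of Corollary~\ref{cor36}.

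With all hypotheses of Corollary~\ref{cor36} established in $V[H]$, its conclusion delivers both a club-regressive and a maximally-complete $\lambda^+$-Souslin tree, and the conditional final clause of Corollary~\ref{cor36} gives the uniformly coherent tree under the stated reflection hypothesis, exactly matching the two conclusions of Corollary~\ref{cor310}. The main obstacle I anticipate is the bookkeeping around preservation: one must be slightly careful that $\square(\lambda^+)$ and $2^\lambda=\lambda^+$ genuinely survive, since these are the places where the smallness $|\mathbb P|=\theta^+\le\lambda$ is essential — if $\theta$ were close to $\lambda$ this would still be fine, but the argument leans on $\lambda^+$-cc (indeed $\theta^+$-cc) preservation of the $\square$-sequence and the counting of names for the cardinal-arithmetic clause. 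Everything else is a direct application of the already-proven Proposition~\ref{prop39} and Corollary~\ref{cor36}.
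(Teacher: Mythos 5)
Your proposal is correct and coincides with the paper's proof, which consists precisely of citing Proposition~\ref{prop39} (applied with $\mu:=\theta^+$) and Corollary~\ref{cor36}; the preservation checks you spell out (the cardinal structure, $2^\lambda=\lambda^+$ by counting nice names, and $\square(\lambda^+)$) are exactly the content left implicit there. One caution: the preservation of $\square(\lambda^+)$ should be attributed to the smallness $|\add(\theta,\theta^+)|=\theta^+\le\lambda$ (e.g., by a pigeonhole argument over the $\le\lambda$ many conditions, a name for a thread yields a ground-model thread, or one can use that the \emph{square} of the poset is $\lambda^+$-cc), and not to the blanket principle that every $\lambda^+$-cc forcing preserves $\square(\lambda^+)$-sequences, which is not a theorem of ZFC.
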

\begin{proof} By Proposition~\ref{prop39} and Corollary~\ref{cor36}.
\end{proof}

\begin{cor}  Suppose that $\theta=\theta^{<\theta}<\mu$ are regular cardinals such that $\mu$ is not weakly compact in $L$.
Then, in any forcing extension by $\add(\theta,\mu)$ followed by a $\theta^+$-distributive forcing,
if $2^{\theta^+}=\theta^{++}=\mu$,
then there exists a $\mu$-Souslin tree.
\end{cor}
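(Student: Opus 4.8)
The plan is to reduce everything to Corollary~\ref{cor36}, applied in the final model, with the role of its cardinal $\lambda$ played by $\theta^+$ and with its existentially-quantified regular cardinal instantiated to our $\theta$; note that then $\lambda^+=\theta^{++}=\mu$, so a club-regressive $\lambda^+$-Souslin tree is exactly the desired $\mu$-Souslin tree. Write $\mathbb P:=\add(\theta,\mu)$, let $\mathbb Q$ denote the subsequent $\theta^+$-distributive forcing, let $H$ be $\mathbb P$-generic and $G$ be $\mathbb Q$-generic, and work towards verifying the three hypotheses of Corollary~\ref{cor36} in $V[H][G]$. Since $\theta=\theta^{<\theta}$, the forcing $\mathbb P$ is $\theta$-closed and $\theta^+$-cc, hence preserves all cardinals and cofinalities, while $\mathbb Q$ is $\theta^+$-distributive. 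The hypothesis $2^{\theta^+}=\theta^{++}=\mu$ guarantees that in $V[H][G]$ the ordinal $\mu$ is still the regular cardinal $\theta^{++}$, that $\theta^+$ is preserved, and that $2^{\theta^+}=(\theta^+)^+$; thus, setting $\lambda:=\theta^+$, the requirements that $\lambda$ be uncountable and that $2^\lambda=\lambda^+$ are immediate, and $\lambda=\theta^+<\theta^{+\theta}$ holds since $\theta$ is infinite.

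For the clause $\square(\lambda^+)$, i.e.\ $\square(\mu)$, I would invoke the classical theorem that a regular uncountable cardinal which is not weakly compact in $L$ carries a $\square$-sequence; equivalently, if $\square(\mu)$ fails then $\mu$ is weakly compact in $L$. Since $L$ is computed identically in $V$ and in any forcing extension, the statement ``$\mu$ is not weakly compact in $L$'' is absolute, so it continues to hold in $V[H][G]$; as $\mu$ is regular there, the theorem yields $\square(\mu)$ in $V[H][G]$ outright. This is the point where I avoid any preservation argument for $\square$: rather than track an $L$-sequence through the two-step iteration and worry whether a thread was added, I simply apply the anti-large-cardinal theorem directly in the final model.

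The remaining, and principal, task is to verify $m(\theta,\theta,\theta,\omega)\le\theta^+$ in $V[H][G]$. In $V[H]$ this bound comes essentially for free: Proposition~\ref{prop39} gives $m(\theta,\theta,\theta)=\theta^+$ there, and since enlarging the parameter $\chi$ only relaxes the covering requirement (a family into $[\theta]^{<2}$ is in particular a family into $[\theta]^{<\omega}$), monotonicity in $\chi$ yields $m(\theta,\theta,\theta,\omega)\le m(\theta,\theta,\theta,2)=\theta^+$. The crux is pushing this bound through $\mathbb Q$. Fix in $V[H]$ a family $\mathcal H$ of size $\theta^+$ of functions from $\theta$ to $[\theta]^{<\omega}$ witnessing $m(\theta,\theta,\theta,\omega)\le\theta^+$. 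Because $\mathbb Q$ is $\theta^+$-distributive, it adds no new $\theta$-sequences of ordinals; coding a set $X\in[\theta]^\theta$ together with a map $g:X\rightarrow\theta$ as such a sequence, every pair $(X,g)$ appearing in $V[H][G]$ already belongs to $V[H]$, where some $h\in\mathcal H$ satisfies $|\{\xi\in X\mid g(\xi)\in h(\xi)\}|=\theta$. As this cardinality assertion is absolute (the value $\theta$ is preserved and the witnessing set lies in $V[H]\s V[H][G]$), $\mathcal H$ remains a witness in $V[H][G]$, giving $m(\theta,\theta,\theta,\omega)\le\theta^+=\lambda$ there. With all three hypotheses of Corollary~\ref{cor36} verified in $V[H][G]$, that corollary produces a club-regressive $\lambda^+$-Souslin tree, which is the sought $\mu$-Souslin tree. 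I expect the preservation of the meeting-number bound across the distributive forcing to be the only genuinely delicate step, the $\square$ and cardinal-arithmetic clauses reducing to a citation and to routine bookkeeping about preserved cardinals.
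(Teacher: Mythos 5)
Your proof is correct and follows essentially the same route as the paper: Todor\v{c}evi\'c's theorem gives $\square(\mu)$ in the final model from the anti-large-cardinal hypothesis, Proposition~\ref{prop39} gives $m(\theta,\theta,\theta)=\theta^+$ after $\add(\theta,\mu)$, and Corollary~\ref{cor36} with $\lambda:=\theta^+$ finishes. The only difference is that you spell out what the paper dismisses with ``this surely remains the case,'' namely the monotonicity in $\chi$ and the preservation of the meeting-number bound under the $\theta^+$-distributive forcing via the no-new-$\theta$-sequences argument, and those details are right.
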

\begin{proof} By \cite[Claim~1.10]{MR908147}, the fact that $\mu$ is not weakly compact in $L$ implies that $\square(\mu)$ holds in any universe in which $\mu$ remains regular and uncountable.
By Corollary~\ref{prop39}, $m(\theta,\theta,\theta)=\theta^+$ in the forcing extension by $\add(\theta,\mu)$,
and this surely remains the case after a $\theta^+$-distributive forcing.
So, in the final model, Corollary~\ref{cor36} (using $\lambda:=\theta^+$)
implies that there exists a $\mu$-Souslin tree.
\end{proof}

The next result implies Theorem~A:

\begin{cor} Suppose that $n<\omega$, $\mathfrak b<2^{\aleph_{n}}=\aleph_{n+1}$ and $\square(\aleph_{n+1})$ holds.
Then:
\begin{enumerate}
\item there exists a club-regressive $\aleph_{n+1}$-Souslin tree;
\item there exists a maximally-complete $\aleph_{n+1}$-Souslin tree;
\item if there exists a stationary subset of $S^{n+1}_0$ all of whose stationary subsets reflect,
then there exists a uniformly coherent $\aleph_{n+1}$-Souslin tree.
\end{enumerate}
\end{cor}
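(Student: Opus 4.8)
The plan is to apply the machinery of Section~\ref{seconnection} with the parameters $\lambda:=\aleph_n$, $\theta:=\chi:=\aleph_0$, and $\kappa:=\lambda^+=\aleph_{n+1}$. First I would dispose of the degenerate case $n=0$: there the hypothesis reads $\mathfrak b<2^{\aleph_0}=\aleph_1$, which is impossible since $\aleph_1\le\mathfrak b$, so the statement holds vacuously. Thus I may assume $n\ge1$, whence $\lambda=\aleph_n$ is a regular uncountable cardinal, $E^{\lambda^+}_\theta=S^{n+1}_0$, and the hypotheses $2^\lambda=\lambda^+$ and $\square(\lambda^+)$ are exactly those given. Note also that $2^{\aleph_n}=\aleph_{n+1}$ yields $\kappa^{<\kappa}=\aleph_{n+1}^{\aleph_n}=(2^{\aleph_n})^{\aleph_n}=2^{\aleph_n}=\kappa$.

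The next step is to pin down the relevant meeting numbers. By Proposition~\ref{bprop}, $m(\aleph_0,\aleph_0,\aleph_0,\omega)=\mathfrak b$, and the hypothesis $\mathfrak b<\aleph_{n+1}$ gives $\mathfrak b\le\aleph_n=\lambda$. Since $\aleph_0<\aleph_n<\aleph_\omega=\aleph_0^{+\aleph_0}$, Proposition~\ref{prop22}(3) yields $m(\aleph_0,\aleph_n)=\aleph_n=\lambda$. Feeding these into Lemma~\ref{lemma23b}(3) gives $m(\lambda,\lambda,\theta,\chi)=\lambda$, exactly as in the proof of Corollary~\ref{cor34}.

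For Clauses (1) and (2) I would simply invoke Corollary~\ref{cor36}: its hypothesis asks for a regular cardinal $\theta$ with $m(\theta,\theta,\theta,\omega)\le\lambda<\theta^{+\theta}$, and $\theta=\aleph_0$ witnesses this by the computation above. Corollary~\ref{cor36} then delivers both a club-regressive and a maximally-complete $\aleph_{n+1}$-Souslin tree.

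The only genuinely delicate point is Clause (3), and this is where I expect the main obstacle to lie. The uniform-coherence clause of Corollary~\ref{cor36} requires \emph{all} stationary subsets of $E^{\lambda^+}_\theta=S^{n+1}_0$ to reflect, whereas here I am handed only a \emph{single} stationary $T\s S^{n+1}_0$ all of whose stationary subsets reflect. To bridge this gap I would exploit the regularity of $\lambda$ to push the \emph{entire} set $S^{n+1}_0$ into the ideal $J_\omega[\lambda^+]$, rather than merely a stationary piece of it. Concretely: since $\lambda$ is regular, Proposition~\ref{sufficient}(1) furnishes a map $d:\lambda\times\lambda^+\rightarrow\lambda$ with $E^{\lambda^+}_{<\lambda}\s A(d)$, so in particular $E^{\lambda^+}_\theta\s A(d)$; as $m(\lambda,\lambda,\theta,\chi)=\lambda$, Lemma~\ref{lemma23} gives $A(d)\cap E^{\lambda^+}_\theta\in J_\omega[\lambda^+]$, and hence $S^{n+1}_0=E^{\lambda^+}_\theta\in J_\omega[\lambda^+]$ outright. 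Since $J_\omega[\lambda^+]$ is an ideal and $T\s S^{n+1}_0$, the hypothesized set $T$ is itself a stationary member of $J_\omega[\lambda^+]$. Now Theorem~\ref{m} (applicable as $\square(\lambda^+)$ holds and $\kappa^{<\kappa}=\kappa$) yields $\boxtimes^-(T)$, and since every stationary subset of $T$ reflects and $2^\lambda=\lambda^+$, Lemma~\ref{lemma42}(2) produces the desired uniformly coherent $\aleph_{n+1}$-Souslin tree. The crux throughout is the observation that for regular $\lambda$ the set $E^{\lambda^+}_\theta$ belongs to $J_\omega[\lambda^+]$ in its entirety, which is precisely what lets the weaker reflection hypothesis of Clause (3) suffice.
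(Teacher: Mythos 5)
Your proof is correct. For clauses (1) and (2) it is essentially the paper's own argument (the paper inlines the content of Corollary~\ref{cor36} rather than citing it, but that corollary is exactly the composition of Corollary~\ref{cor34}, Theorem~\ref{m} and Lemma~\ref{lemma42}(1)). Where you genuinely diverge is clause (3), at the key step of placing the \emph{entire} set $E^{\lambda^+}_\omega$ (or at least the given $T$) inside $J_\omega[\lambda^+]$. The paper does this through the approachability ideal: since $\lambda=\aleph_n$ is regular and uncountable, $E^{\lambda^+}_\omega\in I[\lambda^+]$ by \cite[Lemma~4.4]{Sh:351}, and then the implication $(2)\implies(1)$ of Theorem~\ref{thmc} upgrades ``$J_\omega[\lambda^+]$ contains some stationary subset of $E^{\lambda^+}_\omega$'' (which is what Corollary~\ref{cor34} gives) to ``$E^{\lambda^+}_\omega\in J_\omega[\lambda^+]$''. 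You instead argue directly from Proposition~\ref{sufficient}(1) and Lemma~\ref{lemma23}, in effect observing that the $(4)\implies(1)$ step of Theorem~\ref{thmc} needs no approachability at all when $\lambda$ is regular, since then $E^{\lambda^+}_{<\lambda}\s A(d)$ for a suitable $d$. Your route is more self-contained, avoiding the external citation to Shelah's theorem and the ideal $I[\lambda^+]$ altogether; the paper's route is the one that survives the passage to singular $\lambda$, where the detour through $I[\lambda^+]$ and scales is unavoidable, and it also highlights the $I$--$J$ connection that is one of the paper's themes (Theorem~C). Both arguments then conclude identically via Theorem~\ref{m} and Lemma~\ref{lemma42}(2), and you correctly identified why this extra work is needed: clause (3) supplies only a single stationary $T\s S^{n+1}_0$ all of whose stationary subsets reflect, which is weaker than the blanket reflection hypothesis of Corollary~\ref{cor36}, so membership of $T$ itself in the ideal must be secured.

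Two cosmetic remarks. First, Lemma~\ref{lemma23} is stated for maps with domain $\lambda^+\times\lambda^+$, whereas Proposition~\ref{sufficient}(1) produces a map with domain $\lambda\times\lambda^+$; one extends the map arbitrarily, which only enlarges $A(d)$ --- the same silent convention the paper uses inside the proof of Theorem~\ref{thmc}. Second, Lemma~\ref{lemma23b}(3) literally yields only $m(\lambda,\lambda,\aleph_0,\aleph_0)\le\lambda$; the reverse inequality requires a (short) separate argument, but an upper bound of $\lambda$ is all that Lemma~\ref{lemma23} really uses, and the identical gloss occurs in the paper's Corollary~\ref{cor34}. Your explicit dispatch of the degenerate case $n=0$ and your verification that $\kappa^{<\kappa}=\kappa$ (needed for Theorem~\ref{m}) are correct details that the paper leaves implicit.
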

\begin{proof} 
Set $\lambda:=\aleph_n$.
By Proposition~\ref{bprop}, $m(\aleph_0,\aleph_0,\aleph_0,\aleph_0)=\mathfrak b\le\lambda$.
So, by Corollary~\ref{cor34},  $J_\omega[\lambda^+]$ contains some stationary subset of $E^{\lambda^+}_\omega$.
As $\lambda$ is a regular uncountable cardinal, by \cite[Lemma~4.4]{Sh:351}, $E^{\lambda^+}_\omega\in I[\lambda^+]$,
so, by the implication $(2)\implies(1)$ of Theorem~\ref{thmc}, $E^{\lambda^+}_\omega\in J_\omega[\lambda^+]$.
Then, by Theorem~\ref{m}, $\boxtimes^-(T)$ holds for every stationary subset $T\s E^{\lambda^+}_\omega$.
Now, appeal to Lemma~\ref{lemma42}.
\end{proof}

\section{Some open problems}
Tanmay Inamdar pointed out that there are ground models in which adding a single Cohen real makes $\mathfrak b=\aleph_1$;
for instance, by \cite[Corollary~2.2.9 and Theorem~3.3.23]{MR1350295},
this is the case for a ground model in which $\cov(\mathcal M)=\aleph_1$.
Recalling Theorem~A, this raises the following question:
\begin{q} Suppose that $\square(\aleph_2)$ holds. 
Is it the case that after forcing to a add a single Cohen real, there exists an $\aleph_2$-Souslin tree?
\end{q}

By Corollary~\ref{cor33}(1) and the results of Section~\ref{seconnection},
for a tail of cardinals $\lambda$, if $2^{\lambda}=\lambda^+$ and $\square(\lambda^+)$ holds,
then there exists a $\lambda^+$-Souslin tree. We do not know whether this may be improved to cover all uncountable cardinals $\lambda$.
So, we ask:

\begin{q} Is it consistent that $2^{\aleph_1}=\aleph_2$, $\square(\aleph_2)$ holds,
but there are no $\aleph_2$-Souslin trees?
\end{q}

If the answer to the preceding is affirmative, then Corollary~\ref{cor310} gives a result along the lines of those from \cite{MR768264,paper6,MR3961614} concerning 
the existence of a certain $\kappa$-Aronszajn tree in an extension by a notion of forcing of size $<\kappa$.
\begin{q} Is it consistent that there exists a singular cardinal $\lambda$ 
such that, for every function $d:\lambda^+\times\lambda^+\rightarrow\lambda$,
$E^{\lambda^+}_{\neq\cf(\lambda)}\setminus A(d)$ is stationary?
\end{q}

\section*{Acknowledgments}
The author was partially supported by the Israel Science Foundation (grant agreement 2066/18) and by the European Research Council (grant agreement ERC-2018-StG 802756).

I thank {\v S}{\'a}rka Stejskalov{\'a} for an insightful feedback on  a preliminary version of this paper.

\end{document}